\newtheorem{thm}{Theorem}[section]
\newtheorem{cor}[thm]{Corollary}
\newtheorem{lem}[thm]{Lemma}
\newtheorem{prop}[thm]{Proposition}
\newtheorem{ques}[thm]{Question}
\theoremstyle{definition}
\newtheorem{rem}[thm]{Remark}
\numberwithin{equation}{section}
\newcommand{\bbn}{\mathbb{N}}
\newcommand{\eps}{\varepsilon}
\newcommand{\orb}{\mathrm{Orb}}
\newcommand{\syn}{\mathrm{syn}}
\DeclareMathOperator{\diam}{diam}
\DeclareMathOperator{\fs}{FS}
\DeclareMathOperator{\trans}{Trans}
\DeclareMathOperator{\eq}{Eq}
\newcommand{\ip}{\mathrm{IP}}
\title{On $n$-tuplewise IP-sensitivity and thick sensitivity}
\author{Jian Li and Yini Yang}
\address{Department of Mathematics, Shantou University,
	Shantou, Guangdong, 515063, P.R. China}
\email{lijian09@mail.ustc.edu.cn}
\email{ynyangchs@foxmail.com}
\date{\today}
\subjclass[2020]{37B05, 37B25}
\keywords{Sensitivity, equicontinuity, IP-set, thick set, minimal system, distal factor}
\begin{document}

\begin{abstract}
Let $(X,T)$ be a topological dynamical system and $n\geq 2$. We say that $(X,T)$ is $n$-tuplewise IP-sensitive (resp. $n$-tuplewise thickly sensitive) if there exists a constant $\delta>0$ with the property that for each non-empty open subset $U$ of $X$,
there exist $x_1,x_2,\dotsc,x_n\in U$  such that
\[
\Bigl\{k\in\mathbb{N}\colon \min_{1\le i<j\le n}d(T^k x_i,T^k x_j)>\delta\Bigr\}
\]
is an IP-set (resp. a thick set).

We obtain several sufficient and necessary conditions of a dynamical system to be $n$-tuplewise IP-sensitive or $n$-tuplewise thickly sensitive and show that any non-trivial weakly mixing system is $n$-tuplewise IP-sensitive for all $n\geq 2$, while it is $n$-tuplewise thickly sensitive if and only if it has at least $n$ minimal points.
We characterize two kinds of sensitivity
by considering some kind of factor maps.
We introduce the opposite side of pairwise IP-sensitivity and pairwise thick sensitivity, named (almost) pairwise IP$^*$-equicontinuity and (almost) pairwise syndetic equicontinuity, and obtain dichotomies results for them. In particular, we show that a minimal system is distal if and only if it is pairwise IP$^*$-equicontinuous. 
We show that every minimal system admits a maximal almost pairwise IP$^*$-equicontinuous factor and admits a maximal pairwise syndetic equicontinuous factor, and characterize them by the factor maps to their maximal distal factors.
\end{abstract}

\maketitle
	
\section{Introduction}

Throughout this paper, by a \textit{(topological) dynamical system} we mean a pair $(X, T)$, where $X$ is a compact metric space with a metric $d$ and $T\colon X \to X$ is a continuous surjective map.

A dynamical system $(X,T)$ is called \emph{equicontinuous} if for any $\eps>0$ there exists $\delta>0$ such that for any $x,y\in X$ with $d(x,y)<\delta$ one has $d(T^k x,T^k y)<\eps$ for all $k\in\bbn$.
Roughly speaking, in an equicontinuous system if two points are close enough then they will always be close under iterations. So equicontinuous systems have simple dynamical behaviors.
The opposite side of equicontinuity is sensitive dependence on initial conditions (or briefly sensitivity).
To be precise, a dynamical system $(X,T)$ is called \textit{sensitive} if there is a constant $\delta>0$ with the property that for any opene (open and non-empty) subset $U$ of $X$ there exist $x,y\in U$ and $k\in \bbn$ such that $d(T^k x,T^k y)>\delta$.
The celebrated  dichotomy theorem proved by Auslander and Yorke \cite{AY80} is as follows: every minimal system is either equicontinuous or sensitive.

The requirement to be equicontinuous is too strong, while to be sensitive is relatively weak.
It is natural to consider weak forms of equicontinuity and strong forms of sensitivity.
In \cite{F51} Fomin introduced a weak form of equicontinuity, called stability in the mean in the sense of Lyapunov, which means if two points are close enough then they will always be close under iterations except a set with small upper density.
In \cite{C63} Clay introduced two variations of equicontinuity related to syndetic sets and showed that both of them share similar properties of equicontinuity in some sense.
In \cite{A97} and \cite{F81} the authors studied many dynamical properties via collections of subsets of positive integers with special combinatorial properties. Recently the study of sensitivity along subsets of positive integers has attracted lots of attention, see e.g. \cite{HKZ18, LTY15, LY20, M07, TZ11, YY18}.
We refer the reader to the surveys \cite{LY16} and \cite{LYY21} for related results.

In \cite{X05} Xiong introduced a multi-variable form of sensitivity, named $n$-sensitivity for $n\geq 2$, that is, a dynamical system $(X,T)$ is called \emph{$n$-sensitive} if there exists a constant $\delta>0$ such that for any opene set $U\subset X$, there exist $x_1, x_2, \dotsc, x_n\in U$ and $k\in \bbn$ with
\[\min_{1\le i<j\le n}d(T^k x_i,T^k x_j)>\delta.\]

In this paper, we extend some results on strongly IP-sensitive and strongly thickly sensitive in \cite{YY18} to $n$-tuplewise IP-sensitive and $n$-tuplewise thickly sensitive.
More specifically,
we say that a system $(X,T)$ is $n$-tuplewise IP-sensitive (resp.\ $n$-tuplewise thickly sensitive)
if there exists a constant $\delta>0$ with the property that for each non-empty open subset $U$ of $X$
there exist $x_1,x_2,\dotsc,x_n\in U$ such that
\[
\Bigl\{k\in\bbn\colon \min_{1\le i<j\le n}d(T^k x_i,T^k x_j)>\delta\Bigr\}
\]
is an IP-set (resp.\ a thick set).
We obtain some sufficient and necessary conditions of a dynamical system to be $n$-tuplewise IP-sensitive or $n$-tuplewise thickly sensitive.
We characterize two kinds of sensitivity
by considering some kind of factor maps.  (see Lemma~\ref{lem:IP-sen-semi-open} and~\ref{lem:semi-open-thick}, Theorem~\ref{thm:proximal-not-1-1-IP}, Proposition~\ref{prop:pi-proximal-n-thick} and Corollary~\ref{cor:proximal-factor-thick-sen}).
We also obtain some new results on the sensitivity of weakly mixing systems.
It is shown that any non-trivial weakly mixing system is $n$-tuplewise IP-sensitive for all $n\geq 2$ (see Theorem~\ref{thm:weakly-mixing-IP-sen}), while it is $n$-tuplewise thickly sensitive if and only if it has at least $n$ minimal points (see Theorem~\ref{thm:w-mixing-thick-AP}).
Using the new results we find some examples to show that there exists a system which is $n$-tuplewise thickly sensitive but not
$(n+1)$-tuplewise thickly sensitive, and there exists a system which is  $n$-tuplewise IP-sensitive for all $n\geq 2$ but not pairwise thickly sensitive (see Remark~\ref{rem:thick-exam} and Remark~\ref{rem:thick-ip-exam}).

Motivated by the celebrated dichotomy theorem on equicontinuity and sensitivity,
we also investigate the opposite side of pairwise IP-sensitivity and pairwise thick sensitivity, named (almost) pairwise IP$^*$-equicontinuity and (almost) pairwise syndetic equicontinuity, and obtain dichotomies results for them (see Theorems~\ref{thm:minimal-dichotomy-IP} and~\ref{thm:trans-dichotomy-thick}, Corollary~\ref{cor:minimal-dichotomy-thick}).
It should be noticed that the collections of IP-sets and IP$^*$-sets are not translation invariant. Unlike the standard technique, we need the structure theorem
(\cite[Theorem C]{YY18}) of pairwise IP-sensitivity to get the dichotomy result (see Theorem~\ref{thm:minimal-dichotomy-IP}).
We show that every minimal system admits a maximal almost pairwise IP$^*$-equicontinuous factor which can be regarded as the maximal almost one-to-one extension of its maximal distal factor (see Theorem~\ref{thm:max-ip-*}),
and admits a maximal pairwise syndetic equicontinuous factor which
can be regarded as the maximal proximal extension of its maximal distal factor (see Theorem~\ref{thm:max-syndetic}),
while how to characterize the invariant closed equivalence relations generated by the maximal factors is still open.

It used to be an open question in 1950s that whether every minimal distal system is equicontinuous. This question was answered negatively in \cite{AGH63} and \cite{F61} in early of 1960s.
As an application, we show that one can characterize the distal property via a weaker form of equicontinuity.
To be precise, we show that a minimal system is distal if and only if it is pairwise IP$^*$-equicontinuous (see Corollary~\ref{cor:min-distal-ip-star-eq}).

The paper is organized as follows.
In Section $2$, we recall some definitions and
related results which will be used later.
In Sections $3$ and $4$, we study $n$-tuplewise IP-sensitivity and $n$-tuplewise thick sensitivity respectively.

\section{Preliminaries}
	
In this section we will recall some basic notions and results which are needed in the following sections. We refer the reader to \cite{A97}, \cite{A88} and \cite{F81} for more details on topological dynamics.

Let $\bbn$ denote the collection of positive integers.
A subset $F$ of $\bbn$ is called \emph{thick} if for every $L\in\bbn$ there exists $m_L\in\bbn$ such that $\{m_L,m_L+1,\dotsc,m_L+L\}\subset F$,
and \emph{syndetic} if there is an $M\in\mathbb{N}$ such that $F\cap\{m,m+1,\ldots,m+M\}\neq \emptyset$ for all $m\in \bbn$.
It is clear that a subset $F$ of $\bbn$ is thick if and only if  it has a non-empty intersection with any syndetic set.
A subset $F$ of $\bbn$ is said to be an \emph{IP-set} if there exists an sequence $\{p_i\}_{i=1}^\infty$ such that $\fs(\{p_i\}_{i=1}^\infty)\subset F$,
where
\[
\fs(\{p_i\}_{i=1}^\infty)= \Bigl\{\sum_{i\in \alpha} p_i \colon \alpha
\text{ is a  non-empty finite  subset of }  \bbn \Bigr\}.
\]
The collection of IP-sets has the Ramsey property, that is, if $F$ is an IP-set and $F=F_1\cup F_2$ then either $F_1$ or $F_2$ is an IP-set.
This result is due to Hindman, see e.g. \cite[Proposition~8.13]{F81}.
A subset $F$ of $\bbn$ is  said to be an \emph{IP$^*$-set}
if  it has a non-empty intersection with any IP-set.
The collection of IP$^*$-sets has the filter property, that is,
if $F_1$ and $F_2$ are IP$^*$-sets then so is $F_1\cap F_2$.

Let $(X,T)$ be a dynamical system and $n\in\bbn$.
The $n$-fold product system of $(X,T)$ is denoted by $(X^n,T^{(n)})$, where $T^{(n)}(x_1,\dotsc,x_n)=(T x_1,\dotsc,Tx_n)$ for any $(x_1,\dotsc,x_n)\in X^n$.
For a point $x\in X$,
the \emph{orbit} of $x$ is the set $\{T^k x\colon k\geq 0\}$, denoted by
$\overline{\orb(x,T)}$, and the \emph{$\omega$-limit set of $x$}, denoted by
$\omega(x,T)$, is the collection of limit points of the sequence $\{T^kx\}_{k=1}^\infty$.

If a non-empty closed subset $Y\subset X$ is $T$-invariant, i.e. $TY\subset Y$, then the restriction $(Y, T|_Y)$ is also a  dynamical system, which is referred to be a \textit{subsystem} of $(X,T)$. We will write $(Y,T)$  instead of $(Y, T|_Y)$ for simplicity.
It is clear that for any $x\in X$, $\overline{\orb(x,T)}$ and $\omega(x,T)$ are closed and $T$-invariant.
If a dynamical system $(X,T)$ has no proper subsystems, then we say that it is \emph{minimal}.

A point $x\in X$ is called \emph{recurrent} if $x\in\omega(x,T)$, \emph{transitive} if $\omega(x,T)=X$
and \emph{minimal} if $\bigl(\overline{\orb(x,T)},T\bigr)$ is a minimal
subsystem of $(X,T)$.
A dynamical system $(X,T)$ is called \emph{transitive} if it has a transitive point. Denoted by $\trans(X,T)$ the collection of all transitive points in $X$.
It is not hard to see that a dynamical system $(X,T)$ is minimal if and only if $\trans(X,T)=X$, and in a transitive system $(X,T)$, $\trans(X,T)$ is a dense $G_\delta$ subset of $X$.
A dynamical system $(X,T)$ is called \emph{weakly mixing}
if the product system $(X^2,T^{(2)})$ is transitive.
We say that a dynamical system $(X,T)$ is non-trivial if $X$ is not a singleton.
It is clear that if a weakly mixing system $(X,T)$ is non-trivial then $X$ is perfect.

Let $x\in X$ and $U,V\subset X$. Define
\[
N(x,U)=\{k\in\bbn \colon T^k x\in U\} \ \text{ and }\  N(U,V)=\{k\in\bbn\colon U\cap T^{-k}V\neq\emptyset\}.
\]

A point $x\in X$ is recurrent (resp. minimal) if and only if for every neighborhood $U$ of $x$, $N(x,U)$ is an IP-set (resp. $N(x,U)$ is syndetic).
A dynamical system $(X,T)$ is weakly mixing if and only if for every two opene subsets $U$ and $V$ of $X$, $N(U,V)$ is thick.
We say that a dynamical system is \emph{strongly mixing} if
for every two opene subsets $U$ and $V$ of $X$, $N(U,V)$ is cofinite, that is, there exists $N\in\bbn$ such that $\{N,N+1,N+2,\dotsc\}\subset N(U,V)$.

\begin{lem}[{\cite[Proposition II.3]{F67}}]\label{lem:weakly-mixing-n}
If $(X,T)$ is a weakly mixing system, then for any $n\geq 2$, the $n$-fold product system $(X^{n}, T^{(n)})$ is transitive.
\end{lem}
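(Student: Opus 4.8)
The plan is to prove this by induction on $n$, after translating transitivity of the product into a statement about the hitting-time sets $N(U,V)$. I recall the standard reformulation that $(X^n,T^{(n)})$ is transitive if and only if $N(W_1,W_2)\neq\emptyset$ for every pair of opene subsets $W_1,W_2$ of $X^n$; since the sets $U_1\times\dots\times U_n$ with each $U_i$ opene in $X$ form a basis, and since $N(U_1\times\dots\times U_n,\,V_1\times\dots\times V_n)=\bigcap_{i=1}^n N(U_i,V_i)$ for $T^{(n)}$, it suffices to show that $\bigcap_{i=1}^n N(U_i,V_i)\neq\emptyset$ for all opene $U_1,\dots,U_n,V_1,\dots,V_n\subseteq X$. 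The case $n=2$ is exactly the definition of weak mixing, which serves as the base of the induction.

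For the inductive step I assume the claim for $n$ and prove it for $n+1$, given opene sets $U_1,\dots,U_{n+1},V_1,\dots,V_{n+1}$. The key idea is to \emph{merge} the last two coordinates into a single pair of opene sets. Applying transitivity of $(X^2,T^{(2)})$ to the opene sets $U_n\times V_n$ and $U_{n+1}\times V_{n+1}$ produces an $s\in\bbn$ with $T^sU_n\cap U_{n+1}\neq\emptyset$ and $T^sV_n\cap V_{n+1}\neq\emptyset$; equivalently, the sets $\tilde U=U_n\cap T^{-s}U_{n+1}$ and $\tilde V=V_n\cap T^{-s}V_{n+1}$ are opene. I then claim that $N(\tilde U,\tilde V)\subseteq N(U_n,V_n)\cap N(U_{n+1},V_{n+1})$: indeed, if $u\in\tilde U$ and $T^mu\in\tilde V$, then on one hand $u\in U_n$ and $T^mu\in V_n$, while on the other hand $T^su\in U_{n+1}$ and $T^m(T^su)=T^s(T^mu)\in V_{n+1}$, so $m$ belongs to both hitting-time sets. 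Feeding the $n$ pairs $(U_1,V_1),\dots,(U_{n-1},V_{n-1}),(\tilde U,\tilde V)$ into the inductive hypothesis yields an $m$ lying in $N(U_i,V_i)$ for $i\le n-1$ and in $N(\tilde U,\tilde V)$, hence in $\bigcap_{i=1}^{n+1}N(U_i,V_i)$, which completes the induction.

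The step I expect to be the crux is this merging argument. Each individual set $N(U_i,V_i)$ is thick by weak mixing and hence large, but a finite intersection of thick sets may well be empty, so one cannot simply intersect the hitting-time sets directly. Weak mixing therefore has to be used in a more structural way: transitivity of the square furnishes the synchronizing time $s$ that aligns the open sets of two coordinates, which is exactly what lets me collapse two coordinates into one and lower the dimension. The only routine points to verify are that $\tilde U$ and $\tilde V$ are nonempty (which is precisely the defining property of $s$) and that both memberships $m\in N(U_n,V_n)$ and $m\in N(U_{n+1},V_{n+1})$ are witnessed by the \emph{same} time $m$, as checked above.
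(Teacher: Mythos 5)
Your proof is correct. The paper gives no proof of this lemma---it simply cites \cite[Proposition II.3]{F67}---and your induction with the coordinate-merging step (using transitivity of $(X^2,T^{(2)})$ on the opene sets $U_n\times V_n$ and $U_{n+1}\times V_{n+1}$ to produce $\tilde U,\tilde V$ with $N(\tilde U,\tilde V)\subseteq N(U_n,V_n)\cap N(U_{n+1},V_{n+1})$) is exactly Furstenberg's classical argument for that proposition; the only point left implicit, the standard equivalence between existence of a transitive point and nonemptiness of all hitting sets $N(W_1,W_2)$, is legitimate here since $X^n$ is compact metric and $T$ is surjective.
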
	

A pair $(x,y)\in X^2$ is called \emph{proximal}
if
\[
\liminf_{k\to\infty}d(T^k x,T^k y)=0.
\]
Let $P(X,T)$ denote the collection of all proximal pairs in $(X,T)$.
Then $P(X,T)$ is a reflexive symmetric $T$-invariant relation on $X$, but is neither transitive nor closed in general.

\begin{lem}[{\cite[Lemma~2]{A60}}]
\label{lem:orbit-minimal-proximal}
Let $(X,T)$ be a dynamical system and $x\in X$. Then for any minimal subset $M$ of $\overline{\orb(x,T)}$, there exists a point $y\in M$ such that $(x,y)$ is proximal.
\end{lem}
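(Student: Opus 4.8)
The plan is to argue with the enveloping (Ellis) semigroup of $(X,T)$. First I would form $E=E(X,T)$, the closure of $\{T^k\colon k\ge 0\}$ in $X^X$ with the product topology, the operation being composition of maps. Since $X$ is compact, $E$ is a compact right-topological semigroup: for each fixed $q\in E$ the map $p\mapsto p\circ q$ is continuous, as it is controlled coordinatewise by evaluation at the points $q(z)$. I would also record that the evaluation $\mathrm{ev}_x\colon E\to X$, $p\mapsto px$, is continuous, so that $Ex=\{px\colon p\in E\}$ is compact and equals $\overline{\orb(x,T)}$.

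Since $M\subset\overline{\orb(x,T)}=Ex$, the set $I=\mathrm{ev}_x^{-1}(M)=\{p\in E\colon px\in M\}$ is non-empty and closed. It is in fact a left ideal: for $p\in I$ and $s\in E$ we have $(sp)x=s(px)\in E(px)=\overline{\orb(px,T)}=M$, where the last equality uses that $px$ lies in the minimal set $M$. I would then invoke Ellis's structure theorem for compact right-topological semigroups: the closed left ideal $I$ contains a minimal left ideal $L$, and $L$ contains an idempotent $u=u\circ u$. Put $y=ux$. Because $u\in I$ we have $y\in M$, which is the prescribed minimal set. For proximality, idempotency gives $uy=u(ux)=u^2x=ux=y$, so $u$ maps both $x$ and $y$ to $y$; writing $u$ as a limit of a net $T^{k_i}$ and using continuity of evaluation, $d(T^{k_i}x,T^{k_i}y)\to 0$.

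The point I expect to require the most care is making sure the proximal partner lands in the prescribed $M$ rather than in some other minimal subset of $\overline{\orb(x,T)}$: this is exactly what the closed left ideal $I=\mathrm{ev}_x^{-1}(M)$ buys us, since any idempotent extracted from a minimal left ideal inside $I$ automatically satisfies $ux\in M$. A secondary technical wrinkle is to pass from net convergence to the metric statement $\liminf_{k\to\infty}d(T^kx,T^ky)=0$ along exponents tending to infinity. To deal with this I would fix $\eps>0$ and $N\in\bbn$ and replace $u$ by $T^Nu\in E$; since left translation by the genuine continuous map $T^N$ is continuous, $T^Nu=\lim_i T^{N+k_i}$ and $(T^Nu)x=(T^Nu)y$, which yields exponents $k\ge N$ with $d(T^kx,T^ky)<\eps$. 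As $\eps$ and $N$ are arbitrary, $\{k\colon d(T^kx,T^ky)<\eps\}$ is unbounded for each $\eps$, so $(x,y)$ is proximal and $y\in M$ is the required point.
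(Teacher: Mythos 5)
Your proof is correct, but there is nothing in the paper to compare it with: the lemma is imported verbatim from Auslander's paper (\cite[Lemma~2]{A60}) and no proof is given in this paper. Your argument is the standard enveloping-semigroup proof (essentially the one in modern references such as \cite{A88}), and its key steps all check out: $Ex=\overline{\orb(x,T)}$ because $\mathrm{ev}_x$ is continuous on the compact set $E$ and $E$ contains every iterate $T^k$; the set $I=\mathrm{ev}_x^{-1}(M)$ is a closed left ideal because $E(px)=\overline{\orb(px,T)}=M$ whenever $px\in M$, and this is precisely what forces the proximal partner $y=ux$ to land in the prescribed minimal set $M$ rather than in some other minimal subset of the orbit closure; the existence of a minimal left ideal $L\subset I$ and of an idempotent $u\in L$ is the Ellis--Numakura/Zorn argument, which applies since right multiplication $p\mapsto p\circ q$ is continuous for the product topology; and the computation $uy=u^2x=ux=y$ is valid. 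Your $T^Nu$ device is also the right way to close the one genuine gap in the naive version of this argument: net convergence $T^{k_i}\to u$ alone does not produce exponents tending to infinity (the net could even be eventually constant), whereas replacing $u$ by $T^Nu$ yields, for every $\eps>0$ and $N\in\bbn$, some $k\ge N$ with $d(T^kx,T^ky)<\eps$, which is exactly the condition $\liminf_{k\to\infty}d(T^kx,T^ky)=0$ defining proximality in this paper. The only point you leave implicit is that $E$ is actually closed under composition, i.e. that it is a semigroup; this follows by the usual two-step argument (first $T^k\circ q\in E$ by continuity of left composition with the continuous map $T^k$, then $p\circ q\in E$ by right-topological continuity and closedness of $E$) and deserves a line if you want the proof fully self-contained.
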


Let $(X,T)$ be a dynamical system and $x,y\in X$.
We say that $x$ is \emph{strongly proximal} to $y$ if $(y,y)\in \omega((x,y),T\times T)$, where $\omega((x,y),T\times T)$
is the $\omega$-limit set of $(x,y)$.
Note that if $(x,y)$ is proximal and $y$ is a minimal point then $x$ is strongly proximal to $y$.
	
\begin{lem}[\mbox{\cite[Lemma 4.8]{L12}}] \label{lem:strongly-proximal}
Let $(X,T)$ be a dynamical system and $x,y\in X$.
Then $x$ is strongly proximal to $y$ if and only if
for every neighborhood $U$ of $y$, $N(x,U)\cap N(y,U)$ is an IP-set.
\end{lem}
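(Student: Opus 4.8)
The plan is to rewrite everything inside the product system and handle the two implications separately. For any neighborhood $U$ of $y$ we have $N(x,U)\cap N(y,U)=\{k\in\bbn\colon (T^kx,T^ky)\in U\times U\}$, so the statement is really about the return times of $w=(x,y)$ to neighborhoods of $z=(y,y)$ under $T\times T$, and strong proximality is exactly the assertion $z\in\omega(w,T\times T)$.

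For the ``if'' direction I would argue directly. Every IP-set is infinite, since for a sequence $\{p_i\}$ of positive integers the sums $p_1,p_1+p_2,p_1+p_2+p_3,\dots$ are already strictly increasing. Hence, choosing a countable neighborhood basis $\{U_m\}_{m\ge1}$ at $y$, each $N(x,U_m)\cap N(y,U_m)$ is infinite, so one can pick $k_1<k_2<\cdots$ with $T^{k_m}x\in U_m$ and $T^{k_m}y\in U_m$. Then $(T^{k_m}x,T^{k_m}y)\to(y,y)$, so $(y,y)\in\omega((x,y),T\times T)$ and $x$ is strongly proximal to $y$.

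For the ``only if'' direction, assume $(y,y)\in\omega((x,y),T\times T)$ and fix an open neighborhood $U$ of $y$; I would construct an FS-sequence inside $N(x,U)\cap N(y,U)$ by induction. I build simultaneously integers $p_1<p_2<\cdots$ and open neighborhoods $U\supseteq U_1\supseteq U_2\supseteq\cdots$ of $y$ so that, writing $s_\alpha=\sum_{i\in\alpha}p_i$, after stage $n$ the following hold: (a) $T^{s_\alpha}x\in U$ and $T^{s_\alpha}y\in U$ for every nonempty $\alpha\subseteq\{1,\dots,n\}$; and (b) $T^{s_\alpha}(U_n)\subseteq U$ for every such $\alpha$. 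The decisive use of strong proximality is that the orbit of $(x,y)$ enters $U_n\times U_n$ at arbitrarily large times, which lets me choose $p_{n+1}>p_n$ with $T^{p_{n+1}}x\in U_n$ \emph{and} $T^{p_{n+1}}y\in U_n$ at one and the same instant. For a new index set $\alpha=\{n+1\}\cup\beta$ with $\beta\subseteq\{1,\dots,n\}$ one gets $T^{s_\alpha}x=T^{s_\beta}(T^{p_{n+1}}x)\in T^{s_\beta}(U_n)\subseteq U$ by (b), and likewise for $y$, giving (a) at stage $n+1$; then continuity of the finitely many maps $T^{s_\alpha}$ at $y$ (using $T^{s_\alpha}y\in U$ and $U$ open) produces $U_{n+1}$ satisfying (b). By (a) the resulting sequence has $\fs(\{p_i\}_{i=1}^\infty)\subseteq N(x,U)\cap N(y,U)$, so this set is an IP-set.

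The step I expect to be the main obstacle is precisely the simultaneous return: the construction closes up only because $(y,y)\in\omega((x,y))$ forces a single time $p_{n+1}$ at which $x$ is near $y$ and $y$ is near $y$, so that a \emph{single} neighborhood $U_n$ of $y$ controls both coordinates in the continuity bookkeeping. Mere proximality of $(x,y)$ (return of only the first coordinate) would not provide this, which is what makes the strong proximality hypothesis essential. The remaining work is the routine IP-recurrence shrinking argument, keeping the finite list of current FS-sums inside $U$ as the neighborhoods contract.
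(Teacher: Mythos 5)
Your proof is correct. There is nothing in the paper to compare it against: the lemma is imported from \cite[Lemma 4.8]{L12} without proof. Your argument is the standard one for statements of this kind --- the easy direction from the fact that IP-sets are infinite, and for the converse the Furstenberg-style induction (as in the proof of \cite[Proposition 8.6]{F81}) producing $p_1<p_2<\cdots$ and shrinking neighborhoods $U_n$ of $y$ with $\fs(\{p_i\}_{i=1}^\infty)\subset N(x,U)\cap N(y,U)$ --- and you correctly isolate the one non-routine point, namely that the induction closes only because the second coordinate of $(x,y)$ is $y$ itself, so a single return of the orbit of $(x,y)$ to $U_n\times U_n$ certifies simultaneously $T^{p_{n+1}}x\in U_n$ and $T^{p_{n+1}}y\in U_n$, which ordinary proximality would not provide.
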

	
\begin{prop}[\mbox{\cite[Proposition 5.9]{L12}}] \label{prop:IP-set-stronlgy-proximal}
Let $(X,T)$ be a dynamical system, $x\in X$ and $Y\subset X$ be a closed subset of $X$.
If $N(x,Y)$ is an IP-set, then there exists a point $y\in Y$ such that $x$ is strongly proximal to $y$.
\end{prop}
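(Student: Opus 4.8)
The plan is to prove the statement through the idempotent structure of the Stone--Čech compactification $\beta\bbn$, which is the natural home for IP-sets. Recall the Galvin--Glazer theorem: a subset of $\bbn$ is an IP-set if and only if it belongs to some idempotent of the compact right-topological semigroup $(\beta\bbn,+)$, whose idempotents exist by the Ellis--Numakura lemma. Since $N(x,Y)$ is an IP-set, I would first fix an idempotent ultrafilter $p\in\beta\bbn$ with $N(x,Y)\in p$.

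Next, using that $X$ is compact, I would form the $p$-limit $y:=p\text{-}\lim_{k}T^k x$, the unique point with $\{k\colon T^k x\in U\}\in p$ for every neighborhood $U$ of $y$. Because $N(x,Y)\in p$ and $T^k x\in Y$ for all $k\in N(x,Y)$, every neighborhood $U$ of $y$ satisfies $\{k\colon T^k x\in U\}\cap N(x,Y)\in p$, so $U\cap Y\neq\emptyset$; as $Y$ is closed this forces $y\in Y$. Thus $y$ is the candidate point.

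The heart of the argument is to show that this $y$ is $p$-fixed. Writing the action of $\beta\bbn$ on $X$ via $qz=q\text{-}\lim_n T^n z$ and using the iterated-limit identity $(p+q)z=p\text{-}\lim_n q\text{-}\lim_m T^{n+m}z=p(qz)$, idempotency $p+p=p$ yields
\[
py=p(px)=(p+p)x=px=y.
\]
Consequently $p\text{-}\lim_{k}(T^k x,T^k y)=(px,py)=(y,y)$ in $X\times X$. Since $(\bbn,+)$ has no idempotents, $p$ is nonprincipal, so this $p$-limit is realized along a genuine sequence $k_j\to\infty$, whence $(y,y)\in\omega((x,y),T\times T)$; that is, $x$ is strongly proximal to $y$, as desired. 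Equivalently, one may finish via the criterion of Lemma~\ref{lem:strongly-proximal}: for every neighborhood $U$ of $y$ one has $N(x,U)\cap N(y,U)\in p$, and every member of an idempotent ultrafilter is an IP-set.

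The main obstacle is conceptual rather than computational: one must correctly set up $(\beta\bbn,+)$, its action on the compact space $X$, and the associativity and continuity needed to justify $(p+p)x=p(px)$ in the right order. Once the idempotent $p$ is in hand, the rest is formal. If one prefers to avoid ultrafilters and argue directly with $\fs$-sequences, the difficulty migrates to a diagonal application of Hindman's theorem that simultaneously produces a sub-IP-sequence along which $T^q x$ converges to some $y\in Y$ \emph{and} along which $T^q y$ returns near $y$; securing both convergences at once---the finitary shadow of idempotency---is the delicate step there.
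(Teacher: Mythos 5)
Your proof is correct. Note first that the paper does not prove this proposition at all: it is imported by citation from \cite[Proposition 5.9]{L12}, so the only comparison available is with the standard argument in the cited literature, and yours is essentially that argument. Every step checks out: an idempotent $p\in\beta\bbn$ containing $N(x,Y)$ exists by Galvin--Glazer/Hindman; $y=p\text{-}\lim_k T^kx$ lies in $Y$ because every neighborhood of $y$ meets $Y$ and $Y$ is closed; the identity $(p+q)z=p(qz)$ (which silently uses continuity of each $T^n$ to pull it through the inner $q$-limit) together with $p+p=p$ gives $py=y$; and nonprincipality of $p$ (no idempotents in $(\bbn,+)$) turns the $p$-limit $p\text{-}\lim_k(T^kx,T^ky)=(y,y)$ into a genuine sequence $k_j\to\infty$, so $(y,y)\in\omega((x,y),T\times T)$, which is exactly the paper's definition of strong proximality. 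Your alternative finish via Lemma~\ref{lem:strongly-proximal} is also sound, since $N(x,U)\cap N(y,U)\in p$ and every member of an idempotent ultrafilter is an IP-set. Your closing remark is also apt: the ultrafilter formalism is precisely what replaces the diagonal sub-IP-sequence extraction (Furstenberg--Katznelson IP-limits) that a purely combinatorial proof via Hindman's theorem would require.
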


Following \cite{A97}, we say that a dynamical system $(X,T)$ is \emph{IP$^*$-central}
if for any opene subset $U$ of $X$, $N(U,U)$ is an IP$^*$-set.
If a dynamical system $(X,T)$ admits an invariant measure with full support, then by the well-known Pioncar\'e recurrent theorem it is easy to see that $(X,T)$ is IP$^*$-central, in particular a minimal system is IP$^*$-central.

\begin{lem}\label{lem:supp-IP=subset}
	If a dynamical system $(X,T)$ is IP$^*$-central,
	then for any IP-set $F$ and opene subset $U$ of $X$, there exist
	an IP-subset $F^{\prime}$ of $F$ and a point $z\in U$ such that $F^{\prime}\subset N(z,U)$.
\end{lem}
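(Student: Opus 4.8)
The plan is to build the point $z$ and the IP-subset $F'$ simultaneously by an inductive finite-sums construction, feeding the IP$^*$-centrality into the induction to guarantee that the required return times exist. Fix a sequence $\{p_i\}_{i=1}^\infty$ with $\fs(\{p_i\}_{i=1}^\infty)\subseteq F$. I will produce integers $q_n=\sum_{j\in\gamma_n}p_j$ for consecutive finite blocks $\gamma_1<\gamma_2<\dotsb$ of $\bbn$ (i.e.\ $\max\gamma_n<\min\gamma_{n+1}$), so that automatically $\fs(\{q_i\}_{i=1}^\infty)\subseteq\fs(\{p_i\}_{i=1}^\infty)\subseteq F$; then $F'=\fs(\{q_i\}_{i=1}^\infty)$ will be the desired IP-subset. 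Alongside I will construct a nested sequence of opene sets $U=V_0\supseteq V_1\supseteq V_2\supseteq\dotsb$ with $\overline{V_n}\subseteq V_{n-1}$, maintaining the invariant that $T^{\sum_{i\in\alpha}q_i}(V_n)\subseteq U$ for every non-empty $\alpha\subseteq\{1,\dots,n\}$. Granting the construction, compactness yields a point $z\in\bigcap_n\overline{V_n}$; since $\overline{V_{n+1}}\subseteq V_n$ we get $z\in V_n$ for all $n$, whence $z\in U$ and, applying the invariant at $n=\max\alpha$, we obtain $T^{\sum_{i\in\alpha}q_i}z\in U$ for every finite non-empty $\alpha$, that is, $F'\subseteq N(z,U)$.

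For the inductive step I would argue as follows. Writing $S_{n-1}=\{0\}\cup\fs(\{q_1,\dots,q_{n-1}\})$, the invariant at stage $n-1$ says exactly that $V_{n-1}\subseteq A_n:=\bigcap_{s\in S_{n-1}}T^{-s}U$, an opene set. I want an integer $q_n$ and a point $w_n\in V_{n-1}$ with $T^{q_n}w_n\in A_n$, because then $T^{q_n+s}w_n\in U$ for all $s\in S_{n-1}$, and by continuity of the finitely many maps $\{T^{q_n+s}\}_{s\in S_{n-1}}$ I can shrink to an opene neighbourhood $V_n\ni w_n$ with $\overline{V_n}\subseteq V_{n-1}$ and $T^{q_n+s}V_n\subseteq U$ for all $s\in S_{n-1}$; together with $V_n\subseteq V_{n-1}$ this gives the invariant at stage $n$ (the subsets $\alpha\subseteq\{1,\dots,n\}$ containing $n$ are handled by the new condition, those not containing $n$ by the old invariant). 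Thus the whole problem reduces to finding a suitable $q_n\in N(V_{n-1},A_n)$ lying in a tail of the original IP-structure.

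This is where IP$^*$-centrality enters, and it is the step I expect to be the crux. Since $V_{n-1}\subseteq A_n$ we have $N(V_{n-1},V_{n-1})\subseteq N(V_{n-1},A_n)$, and $N(V_{n-1},V_{n-1})$ is an IP$^*$-set because $(X,T)$ is IP$^*$-central; hence $N(V_{n-1},A_n)$ is an IP$^*$-set. On the other hand, choosing $M_n$ larger than every index appearing in $\gamma_1,\dots,\gamma_{n-1}$, the tail $\fs(\{p_j\}_{j\ge M_n})$ is an IP-set. The key combinatorial input is that the intersection of an IP-set and an IP$^*$-set is again an IP-set: writing the IP-set as the disjoint union of its intersection with the IP$^*$-set and the complementary piece, the Ramsey property of IP-sets (Hindman's theorem, cited above) forces the former to be an IP-set, since the latter is disjoint from the IP$^*$-set and hence contains no IP-set. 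Applying this, $\fs(\{p_j\}_{j\ge M_n})\cap N(V_{n-1},A_n)$ is a non-empty IP-set, and I pick $q_n=\sum_{j\in\gamma_n}p_j$ in it with $\min\gamma_n\ge M_n$; this makes $\gamma_n$ a block beyond $\gamma_{n-1}$ and provides the point $w_n$, closing the induction. The main obstacle is exactly this double bookkeeping: the choice of $q_n$ must simultaneously be a genuine return time (forced to lie in the IP$^*$-set $N(V_{n-1},A_n)$) and respect the block structure that keeps all finite sums inside the prescribed IP-set $F$, and it is the IP $\cap$ IP$^*$ mechanism that lets one meet both demands at once.
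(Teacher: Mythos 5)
Your proof is correct and is essentially the paper's own argument: both inductively choose block sums $q_n=\sum_{j\in\gamma_n}p_j$ from tails of the given IP-set by intersecting them with the IP$^*$-sets $N(V,V)$ supplied by IP$^*$-centrality, build a nested sequence of opene sets with $\overline{V_n}\subset V_{n-1}$, and extract $z$ by compactness. The only difference is bookkeeping: the paper encodes the finite-sums property implicitly through the self-similar nesting $\overline{V_{i+1}}\subset V_i\cap T^{-p_{\alpha_{i+1}}}V_i$, whereas you carry the invariant $T^{\sum_{i\in\alpha}q_i}(V_n)\subset U$ explicitly (and your detour through Hindman's theorem is unnecessary, since picking a single $q_n$ only needs the non-emptiness of the intersection, which is the definition of an IP$^*$-set).
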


\begin{proof}
	Fix an IP-set $F$ and an opene subset $U$ of $X$.
	Pick a sequence $\{p_i\}_{i=1}^{\infty}$ in $\bbn$ with $p_{i+1}>\sum_{j=1}^{i}p_j$ such that $FS\{p_i\}_{i=1}^{\infty} \subset F$.
	Take an opene subset $V_0$ of $X$ such that $\overline{V_0}\subset U$.
	Since $(X,T)$ is IP$^*$-central, there is a finite subset $\alpha_1$ of $\bbn$ such that $V_0\cap T^{-p_{\alpha_1}}V_0\neq\emptyset$ where $p_{\alpha_1}=\sum_{j\in\alpha_1}p_j$.
	Take an opene $V_1$ of $X$ such that $\overline{V_1}\subset V_0\cap T^{-p_{\alpha_1}}V_0$,
	there is a finite subset $\alpha_2$ of $\bbn$ with $\min{\alpha_2}>\max{\alpha_1}$ and
	$V_1\cap T^{-p_{\alpha_2}}V_1\neq \emptyset$.
	By induction we get a sequence $\{\alpha_i \}$ of finite subsets of $\bbn$ and a sequence of opene sets $\{V_i\}$ which satisfy  that for any $i\in\bbn$,
	$\min{\alpha_{i+1}}>\max{\alpha_i}$ and
	$\overline{V_{i+1}}\subset V_i\cap T^{-p_{\alpha_{i+1}}}V_i$.
	By the compactness of $X$, take a point $z\in\bigcap_{i=1}^{\infty}\overline{V_i}$ and let $q_i=p_{\alpha_i}$ for $i\in\bbn$. It is easy to verify that $FS\{q_i\}_{i=1}^{\infty}\subset N(z,U)$.
\end{proof}

A pair $(x,y)\in X^2$ is called \emph{distal} if it is not proximal.
A point $x\in X$ is called \emph{distal} if for any $y\in \overline{\orb(x,T)}\setminus\{x\}$, $(x,y)$ is distal.
By Lemma~\ref{lem:orbit-minimal-proximal}, every distal point is minimal. Furthermore, by \cite[Theorem 9.11]{F81} a point $x$ is distal if and only if for every neighborhood $U$ of $x$, $N(x,U)$ is an IP$^*$-set.
A dynamical system $(X,T)$ is called \emph{distal} if any pair
of distinct points in $(X,T)$ is distal.
It is clear that a dynamical system is distal if and only if every point is distal.
A minimal system $(X,T)$ is called \emph{point-distal}
if there exists some distal point in $X$.

Let $(X,T)$ and $(Y,S)$ be two dynamical systems.
If there is a continuous surjection $\pi\colon X \to Y$ such that $\pi\circ T = S\circ \pi$, then we say that $\pi$ is a \emph{factor map}, the system $(Y,S)$ is a \emph{factor} of $(X,T)$ or $(X, T)$ is an \emph{extension} of $(Y,S)$.
For a factor map $\pi\colon (X,T)\to (Y,S)$, let
\[
R_\pi=\{(x,y)\in X^2 \colon \pi(x)=\pi(y)\}.
\]
Then $R_\pi$ is a $T\times T$-invariant closed equivalence relation on $X$ and $Y=X/R_\pi$.
In fact, there exists a one-to-one correspondence between the collection of factors of $(X,T)$ and the collection of $T\times T$-invariant closed  equivalence relations on $X$.

Every topological dynamical system $(X,T)$ admits a maximal distal factor $(X_d,T_d)$, that is, every distal factor of $(X,T)$ is also a factor of $(X_d,T_d)$.
The maximal distal factor $(X_d,T_d)$ corresponds the smallest $T\times T$-invariant closed equivalence relation on $X$ containing the proximal relation $P(X,T)$ (see \cite[Theorem 2]{EG60}).

\begin{lem}[{\cite[Corollary 1]{A60} or \cite[Theorem 3]{C63b}}] \label{lem:proximal-closed}
Let $(X,T)$ be a dynamical system.
If $P(X,T)$ is closed, then it is an equivalence relation on $X$.
\end{lem}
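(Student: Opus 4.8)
The relation $P(X,T)$ is already reflexive, symmetric and $T$-invariant, so the only thing left to verify is transitivity, and this is precisely where the hypothesis that $P(X,T)$ is closed will enter. The plan is to run the argument through the characterization that a pair $(x,y)$ is proximal if and only if the orbit closure $\overline{\orb((x,y),T\times T)}$ meets the diagonal $\Delta=\{(w,w)\colon w\in X\}$. The forward direction follows by extracting, from a sequence $n_k$ with $d(T^{n_k}x,T^{n_k}y)\to 0$, a subsequence along which $T^{n_k}x$ converges to some $w$ (so that $T^{n_k}y\to w$ as well), giving $(w,w)\in\overline{\orb((x,y),T\times T)}\cap\Delta$; the reverse direction is immediate from the definition of proximality.

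Now suppose $(x,y),(y,z)\in P(X,T)$ and aim to show $(x,z)\in P(X,T)$. First I would exploit the proximality of $(y,z)$: choose a sequence $n_k$ with $d(T^{n_k}y,T^{n_k}z)\to 0$ and, using compactness of $X$, pass to a subsequence so that $T^{n_k}x\to a$ and $T^{n_k}y\to b$; then $T^{n_k}z\to b$ as well, since $d(T^{n_k}z,b)\le d(T^{n_k}z,T^{n_k}y)+d(T^{n_k}y,b)\to 0$. The crucial step is to transfer the proximality of $(x,y)$ to the limit pair: because $P(X,T)$ is $T\times T$-invariant we have $(T^{n_k}x,T^{n_k}y)\in P(X,T)$ for every $k$, and because $P(X,T)$ is closed the limit $(a,b)$ again lies in $P(X,T)$. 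This is exactly the point at which the closedness hypothesis is indispensable.

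To finish, I would note that $(T^{n_k}x,T^{n_k}z)\to(a,b)$ as well (because $T^{n_k}z\to b$), so $(a,b)\in\overline{\orb((x,z),T\times T)}$. Since $(a,b)$ is proximal, its orbit closure meets $\Delta$; but $\overline{\orb((a,b),T\times T)}\subset\overline{\orb((x,z),T\times T)}$, as the latter is closed and $T\times T$-invariant and contains $(a,b)$. Hence $\overline{\orb((x,z),T\times T)}\cap\Delta\neq\emptyset$, and by the characterization above $(x,z)\in P(X,T)$, which establishes transitivity.

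I expect the only genuinely delicate point to be the bookkeeping in the reduction to limits: making the three coordinates converge along one common subsequence and verifying that $T^{n_k}z$ shares the limit $b$ of $T^{n_k}y$, together with the recognition that closedness is used solely to promote $(x,y)\in P(X,T)$ to $(a,b)\in P(X,T)$. One could equivalently bypass the diagonal characterization and chain the two defining sequences of $(y,z)$ and of $(a,b)$ directly, using continuity of the iterates $T^{m}$ to produce a single sequence $l_j\to\infty$ with $d(T^{l_j}x,T^{l_j}z)\to 0$; but routing the limiting steps through orbit closures and the diagonal keeps the argument cleanest.
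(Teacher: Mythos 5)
Your proof is correct and complete: reflexivity, symmetry and $T\times T$-invariance are immediate, and your transitivity argument is sound --- the sequence $n_k$ witnessing proximality of $(y,z)$ gives limits $T^{n_k}x\to a$, $T^{n_k}y\to b$, $T^{n_k}z\to b$; closedness plus invariance of $P(X,T)$ promotes $(x,y)\in P(X,T)$ to $(a,b)\in P(X,T)$; and since $(a,b)$ lies in $\overline{\orb((x,z),T\times T)}$, which is closed and invariant, the diagonal characterization of proximality yields $(x,z)\in P(X,T)$. Note that the paper itself offers no proof of this lemma --- it is quoted from \cite{A60} and \cite{C63b} --- so there is nothing internal to compare against; your argument is essentially the classical one of Auslander, in sequence form, with orbit closures playing the role that the enveloping semigroup plays in more modern treatments (where one would write: $px=py$ for some $p$ in the enveloping semigroup, closedness gives $(py,pz)\in P(X,T)$, hence $qpx=qpz$ for some $q$). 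The only point worth spelling out slightly more is the ``reverse direction'' of your diagonal characterization: if $(w,w)\in\overline{\orb((x,z),T\times T)}$ is attained at a finite orbit index $k$ rather than as a genuine limit, one should note that $T^kx=T^kz$ forces $T^mx=T^mz$ for all $m\ge k$, so $\liminf_{m\to\infty}d(T^mx,T^mz)=0$ still holds; this is routine and does not affect the proof.
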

	
A factor map $\pi\colon (X, T) \to (Y, S)$ is called \emph{proximal} if $R_{\pi}\subset P(X,T)$ and \emph{almost one-to-one} if $\{x \in X \colon \pi^{-1}(\pi(x)) \textrm{ is a singleton} \}$ is residual in $X$.
For a factor map $\pi\colon (X,T)\to (Y,S)$ between minimal systems, $\pi$ is almost one-to-one if and only if there exists a point $y\in Y$ such that $\pi^{-1}(y)$ is a singleton (see e.g. \cite[Corollary 3.6]{LY20}).
Let $X$ and $Y$ be compact metric spaces.
A map $\pi\colon X\to Y$ is called \emph{semi-open} if for every opene subset $U$ of $X$, $\pi(U)$ has a non-empty interior.
It is easy to see that a map $\pi\colon X\to Y$ is semi-open if and only if for every dense subset $A$ of $Y$, $\pi^{-1}(A)$ is dense in $X$.
Note that any factor map $\pi\colon (X,T)\to (Y,S)$ between minimal systems is semi-open (see e.g. \cite[Theorem~1.15]{A88}).

\section{\texorpdfstring{$N$}{N}-tuplewise IP-sensitivity}
	
Let $(X,T)$ be a dynamical system and $n\geq 2$.
We say that $(X,T)$ is \emph{$n$-tuplewise IP-sensitive}
if there exists a constant $\delta>0$ with the property that for any opene subset $U$ of $X$ there exist $x_1,\dotsc,x_n\in U$ such that
\[
	\Bigl \{ k\in\bbn\colon \min_{1\le i<j\le n}d(T^k x_i,T^k x_j)>\delta\Bigr \}
\]
is an IP-set. The constant $\delta$ is called an \emph{$n$-tuplewise IP-sensitive constant} for $(X,T)$.
For simplicity, $2$-tuplewise IP-sensitivity will be called \emph{pairwise IP-sensitivity}.

We first study some basic properties of \texorpdfstring{$n$}{n}-tuplewise IP-sensitivity.

\subsection{Properties of \texorpdfstring{$n$}{n}-tuplewise IP-sensitivity}

We show a sufficient and necessary condition related to strong proximality for $n$-tuplewise IP-sensitivity.
	
\begin{prop}\label{prop:strongly-proximal-IP-n}
    Let $(X,T)$ be a dynamical system and $n\geq 2$.
    Then $(X,T)$ is $n$-tuplewise IP-sensitive if and only if
    there exists a constant $\delta>0$ with the property that for
    any opene subset $U$ of $X$,
    there exist $x_1,\dotsc,x_n\in U$ and $y_1,\dotsc,y_n\in X$ such that $\min_{1\le i<j\le n}d(y_i,y_j)\geq\delta$ and $(x_1,\dotsc, x_{n})$ is strongly proximal
	to $(y_1,\dotsc, y_{n})$ in the $n$-fold product system $(X^n,T^{(n)})$.
\end{prop}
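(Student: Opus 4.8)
The plan is to prove both implications by transferring everything to the $n$-fold product system $(X^n,T^{(n)})$ and then invoking the two cited results on strong proximality, namely Lemma~\ref{lem:strongly-proximal} and Proposition~\ref{prop:IP-set-stronlgy-proximal}. Throughout I would write $\bar{x}=(x_1,\dotsc,x_n)$ and $\bar{y}=(y_1,\dotsc,y_n)$, and set
\[
\Delta_\delta=\Bigl\{\bar{z}\in X^n\colon \min_{1\le i<j\le n}d(z_i,z_j)\ge\delta\Bigr\},
\]
which is a closed subset of $X^n$ because $d$ and the finite minimum are continuous. The role of $\Delta_\delta$ is to serve as the closed ``witness'' set to which Proposition~\ref{prop:IP-set-stronlgy-proximal} is applied.

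For the forward direction, assume $(X,T)$ is $n$-tuplewise IP-sensitive with constant $\delta$. Fixing an opene $U\subset X$, I take $x_1,\dotsc,x_n\in U$ so that $F=\{k\colon\min_{i<j}d(T^kx_i,T^kx_j)>\delta\}$ is an IP-set. I would then observe that $T^{(n)k}\bar{x}\in\Delta_\delta$ for every $k\in F$ (here the use of $\ge\delta$ rather than $>\delta$ in the definition of $\Delta_\delta$ is exactly what makes the set closed), so $F\subseteq N(\bar{x},\Delta_\delta)$ in the product system, and hence $N(\bar{x},\Delta_\delta)$ is an IP-set, since every superset of an IP-set is an IP-set. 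Applying Proposition~\ref{prop:IP-set-stronlgy-proximal} to $(X^n,T^{(n)})$ with the point $\bar{x}$ and the closed set $\Delta_\delta$ then yields a point $\bar{y}\in\Delta_\delta$ to which $\bar{x}$ is strongly proximal; and $\bar{y}\in\Delta_\delta$ is precisely the condition $\min_{i<j}d(y_i,y_j)\ge\delta$. This produces the desired data, with the same constant $\delta$.

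For the reverse direction, assume the strong-proximality condition holds with constant $\delta$. Fixing opene $U$, I take $x_1,\dotsc,x_n\in U$ and $\bar{y}\in\Delta_\delta$ with $\bar{x}$ strongly proximal to $\bar{y}$. I would choose a radius $0<\eta<\delta/4$ and let $W=\prod_{i=1}^n B(y_i,\eta)$, an open neighborhood of $\bar{y}$ in $X^n$. By Lemma~\ref{lem:strongly-proximal} applied in the product system, $N(\bar{x},W)\cap N(\bar{y},W)$ is an IP-set, so its superset $N(\bar{x},W)$ is an IP-set. For any $k\in N(\bar{x},W)$ one has $T^kx_i\in B(y_i,\eta)$ for all $i$, so the triangle inequality gives, for $i<j$,
\[
d(T^kx_i,T^kx_j)\ge d(y_i,y_j)-2\eta\ge\delta-2\eta>\delta/2 .
\]
Hence $N(\bar{x},W)\subseteq\{k\colon\min_{i<j}d(T^kx_i,T^kx_j)>\delta/2\}$, so the latter set is an IP-set, showing $(X,T)$ is $n$-tuplewise IP-sensitive with constant $\delta/2$.

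Neither direction presents a genuine obstacle once the problem is lifted to the product system: the forward implication is essentially immediate from Proposition~\ref{prop:IP-set-stronlgy-proximal}, and the reverse is a routine neighborhood-plus-triangle-inequality argument on top of Lemma~\ref{lem:strongly-proximal}. The only points needing a little care are the choice of $\Delta_\delta$ with $\ge\delta$ so that it is closed (as required to apply Proposition~\ref{prop:IP-set-stronlgy-proximal}), and keeping track that the sensitivity constant may shrink from $\delta$ to $\delta/2$ in the reverse direction, which is harmless since the statement only asserts the existence of \emph{some} constant.
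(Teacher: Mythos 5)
Your proof is correct and follows essentially the same route as the paper: both directions are lifted to $(X^n,T^{(n)})$, with Proposition~\ref{prop:IP-set-stronlgy-proximal} giving the forward implication and Lemma~\ref{lem:strongly-proximal} plus a small-neighborhood estimate giving the reverse. The only cosmetic difference is that in the forward direction the paper applies Proposition~\ref{prop:IP-set-stronlgy-proximal} to the closed set $\overline{\{(T^{(n)})^k(x_1,\dotsc,x_n)\colon k\in F\}}$ rather than to your set $\Delta_\delta$, which changes nothing of substance.
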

\begin{proof}
	($\Leftarrow$)
	Let $\delta$ be the constant with the desired property.
	Fix an opene subset $U$ of $X$.
	Then $x_1,\dotsc,x_n\in U$ and $y_1,\dotsc,y_n\in X$ such that $\min_{1\le i<j\le n}d(y_i,y_j)\geq\delta$ and $(x_1,\dotsc, x_{n})$ is strongly proximal
	to $(y_1,\dotsc, y_{n})$.
	Pick a neighborhood $V$ of $(y_1,\dotsc,y_n)$ in $X^n$
	such that for any $(z_1,\dotsc,z_n)\in V$, one has
	$\min_{1\le i<j\le n}d(z_i,z_j)>\frac{\delta}{2}$.
	According to Lemma~\ref{lem:strongly-proximal},
	$N((x_1,\dotsc, x_{n}), V)$ is an IP-set.
	Then $\frac{\delta}{2}$ is an $n$-tuplewise IP-sensitive
	constant for $(X,T)$.
	
	$(\Rightarrow)$
	Let $\delta$ be an $n$-tuplewise IP-sensitive constant for $(X,T)$. Fix an opene set $U$ of $X$, there exist
	$x_1,\dotsc, x_{n}\in U$ such that
	\[
	F:=\Bigl\{k\in\bbn\colon \min_{1\le i<j \le n}d(T^k x_i,T^k x_j)>\delta\Bigr \}
	\]
	is an IP-set.
	According to Proposition~\ref{prop:IP-set-stronlgy-proximal},  there exists a point
	\[
	(y_1,\dotsc, y_{n})\in\overline{ \{T^{(n)}(x_1,\dotsc,x_n)\colon n\in F\}}
	\]
	such that $(x_1,\dotsc, x_{n})$ is strongly proximal to $(y_1,\dotsc, y_{n})$. By the construction of
	$F$, one has
	$\min_{1\le i<j\le n}d(y_i,y_j)\geq\delta$.
	Then the constant $\delta$ has the desired property.
\end{proof}

The following result reveals that a non-trivial weakly mixing system is $n$-tuplewise IP-sensitive for all $n\geq 2$ by using Proposition~\ref{prop:strongly-proximal-IP-n}.

\begin{thm}\label{thm:weakly-mixing-IP-sen}
    If $(X,T)$ is a non-trivial weakly mixing system,
    then $(X,T)$ is $n$-tuplewise IP-sensitive for all $n\geq 2$.
\end{thm}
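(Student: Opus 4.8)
The plan is to verify the criterion given by Proposition~\ref{prop:strongly-proximal-IP-n}. That is, I want to produce a single sensitivity constant $\delta>0$ so that for every opene set $U\subset X$ I can find $x_1,\dots,x_n\in U$ and target points $y_1,\dots,y_n\in X$ which are pairwise $\delta$-separated and such that $(x_1,\dots,x_n)$ is strongly proximal to $(y_1,\dots,y_n)$ in $(X^n,T^{(n)})$.

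First I would fix the constant. Since $(X,T)$ is non-trivial weakly mixing, $X$ is perfect and in particular contains at least two (indeed infinitely many) points, so I may choose $n$ distinct points $y_1,\dots,y_n\in X$ and set $\delta=\tfrac{1}{2}\min_{i<j}d(y_i,y_j)>0$; these fixed $y_i$ will serve as the targets for \emph{every} $U$. Then, given an opene $U$, I would take product neighborhoods: choose an opene $V_i\ni y_i$ with $\diam(V_i)$ small enough that any $n$-tuple drawn one-from-each-$V_i$ is pairwise $\delta$-separated, and work inside the $n$-fold product system.

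The key dynamical input is that weak mixing passes to all finite products: by Lemma~\ref{lem:weakly-mixing-n} the system $(X^n,T^{(n)})$ is transitive, and in fact weakly mixing, so $N(W_1,W_2)$ is thick for any two opene subsets $W_1,W_2$ of $X^n$. I would exploit this to run a Furstenberg-style IP construction inside $U^n$ aiming at the target box $V_1\times\cdots\times V_n$, producing a point $(x_1,\dots,x_n)\in U^n$ whose $T^{(n)}$-orbit returns to the target box along an IP-set of times. Concretely, using thickness (hence non-empty intersection with every syndetic set, and the ability to nest return times) one builds a decreasing sequence of opene sets in $U^n$ together with an FS-sequence $\{q_i\}$ of return times to $V_1\times\cdots\times V_n$; the intersection point $(x_1,\dots,x_n)$ then satisfies $\fs(\{q_i\})\subset N\bigl((x_1,\dots,x_n),V_1\times\cdots\times V_n\bigr)$. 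Combined with the requirement that the orbit also accumulate on $(y_1,\dots,y_n)$ itself, this yields strong proximality of $(x_1,\dots,x_n)$ to $(y_1,\dots,y_n)$ via the characterization in Lemma~\ref{lem:strongly-proximal}, or directly via Proposition~\ref{prop:IP-set-stronlgy-proximal} applied to the closed set $\overline{V_1}\times\cdots\times\overline{V_n}$.

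The main obstacle is organizing the inductive IP-construction so that the returns genuinely land on an FS-set rather than merely an infinite set of times, i.e.\ upgrading transitivity/thickness of $N(W_1,W_2)$ into a full IP-set of simultaneous return times for a single point. The clean way is to mimic the proof of Lemma~\ref{lem:supp-IP=subset}: at each stage, thickness of the relevant hitting-time set for the product system lets me find a new return time exceeding all partial sums of the previously chosen $p_j$, so that all finite sums remain valid return times; compactness then delivers the common limit point $(x_1,\dots,x_n)$. A secondary point to handle carefully is that I need strong proximality (the diagonal point $(y_1,\dots,y_n,\dots)$ lying in the $\omega$-limit set of the enlarged pair), which is why invoking Proposition~\ref{prop:IP-set-stronlgy-proximal} — guaranteeing some strongly proximal target inside the closed box — is the most economical route, since it converts an IP-set of entrances into the box directly into the strong-proximality needed by Proposition~\ref{prop:strongly-proximal-IP-n}, and the pairwise separation of that target is forced by $\diam(V_i)$ being small.
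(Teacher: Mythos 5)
Your reduction is sound and, up to the point where the work actually happens, coincides with the paper's proof: fixing the targets $y_1,\dotsc,y_n$ and $\delta$ once and for all, shrinking to a box $W=V_1\times\dotsb\times V_n$ of pairwise $\delta$-separated tuples, and noting that an IP-set inside $N\bigl((x_1,\dotsc,x_n),W\bigr)$ for some $(x_1,\dotsc,x_n)\in U^n$ finishes the job via Proposition~\ref{prop:IP-set-stronlgy-proximal} and Proposition~\ref{prop:strongly-proximal-IP-n}. The gap is in the step that produces that IP-set. The device you invoke (``find a new return time exceeding all partial sums of the previously chosen $p_j$, so that all finite sums remain valid return times'') is the proof that a \emph{thick set contains an IP-set}; applied to the thick set $N(U^n,W)$ it yields finite sums lying in $N(U^n,W)$, but each such time is witnessed by a \emph{different} point of $U^n$, so it does not give a single point with $\fs(\{q_i\})\subset N\bigl((x_1,\dotsc,x_n),W\bigr)$. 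In the nested-open-sets construction that does produce a single point, largeness of $q_{k+1}$ is irrelevant; the invariant you must maintain is $A_k\subset (T^{(n)})^{-s}W$ for all $s\in\fs(\{q_1,\dotsc,q_k\})$, and to extend it you need the set $W_k:=W\cap\bigcap_{s\in\fs(\{q_1,\dotsc,q_k\})}(T^{(n)})^{-s}W$ to be non-empty, i.e.\ the \emph{fixed target box must return into itself simultaneously} at all accumulated sums. Your sketch never arranges this, and it cannot be obtained by mimicking Lemma~\ref{lem:supp-IP=subset} verbatim: there the returns are self-returns of the nested sets themselves (guaranteed by IP$^*$-centrality), whereas yours are heterogeneous returns from $U^n$ into a fixed $W$. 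Thickness of the individual hitting-time sets does not rescue the induction either, since two thick sets can be disjoint.

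The repair uses exactly the tool the paper leads with: transitivity of the $2n$-fold product (Lemma~\ref{lem:weakly-mixing-n}). For instance, choose $q_{k+1}\in N(A_k,W_k)\cap N(W_k,W_k)=N(A_k\times W_k,\,W_k\times W_k)$, which is non-empty because $(X^{2n},T^{(2n)})$ is transitive; or run the self-return construction of Lemma~\ref{lem:supp-IP=subset} inside $X^{2n}$ starting from an opene $C_0$ with $\overline{C_0}\subset U^n\times W$, taking $q_{k+1}\in N(C_k,C_k)$ and $\overline{C_{k+1}}\subset C_k\cap (T^{(2n)})^{-q_{k+1}}C_k$. But once you are working in $X^{2n}$ anyway, the paper's argument shows the entire construction is unnecessary: pick a transitive point $(x_1,\dotsc,x_n,y_1,\dotsc,y_n)$ of $(X^{2n},T^{(2n)})$ lying in $U^n\times V$ (transitive points are dense); its $\omega$-limit set is all of $X^{2n}$, in particular it contains $(y_1,\dotsc,y_n,y_1,\dotsc,y_n)$, and that is precisely the statement that $(x_1,\dotsc,x_n)$ is strongly proximal to $(y_1,\dotsc,y_n)$; Proposition~\ref{prop:strongly-proximal-IP-n} then concludes. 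So your strategy is repairable with tools you already cite, but the mechanism you describe for generating the IP-set of return times fails as stated, and the repaired version is strictly more work than the paper's one-line use of a transitive point.
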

\begin{proof}
As the weak mixing system $(X,T)$ is non-trivial, the compact metric space $X$ is perfect.
Fix $n\geq 2$ and pick a point $(z_1,\dotsc,z_n)\in X^n$ with pairwise distinct coordinates.
Let $\delta= \frac{1}{2}\min_{1\le i<j \le n}d(z_i,z_j)$. Then $\delta>0$.	
Pick a neighborhood $V$ of $(z_1,\dotsc,z_n)$ in $X^n$ such that for any $(z_1^{\prime},\dotsc,z_n^{\prime})\in V$, one has $\min_{1\le i<j\le n}d(z_i^{\prime},z_j^{\prime})>\delta$.
Since $(X,T)$ is weakly mixing, by Lemma \ref{lem:weakly-mixing-n} $(X^{n}, T^{(n)})$ is also weakly mixing.
For any opene set $U\subset X$, there exist $x_1,\dotsc,x_n\in U$ and $(y_1,\dotsc,y_n)\in V$ such that $(x_1,\dotsc,x_n, y_1,\dotsc,y_n)$ is a transitive point of $(X^{2n}, T^{(2n)})$.
Then $\min_{1\le i\not=j\le n}d(y_i,y_j)>\delta$ and
\[
(y_1,\dotsc,y_n, y_1,\dotsc,y_n)\in \omega\bigl((x_1,\dotsc,x_n, y_1,\dotsc,y_n),T^{(2n)}\bigr),
\]
which shows that $(x_1,\dotsc,x_n)$ is strongly proximal to $(y_1,\dotsc,y_n)$.
By Proposition~\ref{prop:strongly-proximal-IP-n}, $(X,T)$ is $n$-tuplewise IP-sensitive.
\end{proof}
			
For IP$^*$-central systems,
the sufficient and necessary condition for $n$-tuplewise IP-sensitivity can be  characterized as follows.
	
\begin{prop}\label{prop:strongly-proximal-IP}
Let $(X,T)$ be an IP$^*$-central system and $n\geq 2$.
Then $(X,T)$ is $n$-tuplewise IP-sensitive if and only if
there exists a constant $\delta>0$ with the property that for any opene subset $U$ of $X$,
there exist $x_1,\dotsc,x_{n-1}\in U$ and $y_1,\dotsc,y_{n-1}\in X$ with $\min_{1\le i<j\le n-1}d(y_i,y_j)>\delta$ and $\min_{1\le i\le n-1}d(x_i, y_i)>\delta$ such that $(x_1,\dotsc, x_{n-1})$ is strongly proximal to $(y_1,\dotsc, y_{n-1})$ in the $(n-1)$-fold product system $(X^{n-1},T^{(n-1)})$.
\end{prop}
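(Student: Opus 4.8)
The plan is to prove the two implications separately, with the forward direction being essentially combinatorial and the backward direction being the only place where the IP$^*$-central hypothesis is used.

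For the necessity direction ($\Rightarrow$), I would start from Proposition~\ref{prop:strongly-proximal-IP-n}: if $(X,T)$ is $n$-tuplewise IP-sensitive with constant $\delta$, then for each opene set one finds $x_1,\dots,x_n$ in it and $y_1,\dots,y_n\in X$ with $\min_{1\le i<j\le n}d(y_i,y_j)\ge\delta$ such that $(x_1,\dots,x_n)$ is strongly proximal to $(y_1,\dots,y_n)$. Given an arbitrary opene $U$, I would first replace it by an opene $U'\subseteq U$ with $\diam U'<\delta/4$, so that the points produced lie in $U'\subseteq U$. The key observation is a pigeonhole argument: since all $x_i$ lie in $U'$ (hence are within $\delta/4$ of one another) while the $y_i$ are pairwise at least $\delta$ apart, at most one index $i$ can satisfy $d(x_i,y_i)\le\delta/4$, because two such indices would force $d(y_i,y_j)<\delta$. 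Discarding that single bad index leaves at least $n-1$ indices with $d(x_i,y_i)>\delta/4$ and still $d(y_i,y_j)\ge\delta>\delta/4$. Finally I would note that a coordinate projection $X^{2n}\to X^{2(n-1)}$ is a factor map and therefore carries $\omega$-limit sets into $\omega$-limit sets, so the surviving subtuple $(x_1,\dots,x_{n-1})$ remains strongly proximal to $(y_1,\dots,y_{n-1})$. This yields the desired condition with constant $\delta/4$, using nothing about IP$^*$-centrality.

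For the sufficiency direction ($\Leftarrow$), suppose the stated condition holds with constant $\delta$. Given an opene $U$, I would pass to an opene $U'\subseteq U$ with $\diam U'<\delta/2$ and apply the hypothesis to $U'$, obtaining $x_1,\dots,x_{n-1}\in U'$ and $y_1,\dots,y_{n-1}$ with the two separation properties and with $(x_1,\dots,x_{n-1})$ strongly proximal to $(y_1,\dots,y_{n-1})$. The diameter bound is used geometrically: since $d(x_i,y_i)>\delta$ and $\diam U'<\delta/2$, every point of $U'$ is at distance $>\delta/2$ from each $y_i$. Using Lemma~\ref{lem:strongly-proximal}, strong proximality furnishes an IP-set $F$ along which $d(T^kx_i,y_i)$ is small, say $<\delta/8$, for every $i$ simultaneously. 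Here IP$^*$-centrality enters: applying Lemma~\ref{lem:supp-IP=subset} to $F$ and $U'$ produces an IP-subset $F'\subseteq F$ and an extra point $z\in U'$ with $T^kz\in U'$ for all $k\in F'$. The $n$ points $x_1,\dots,x_{n-1},z\in U'\subseteq U$ then have pairwise $\delta/4$-separated orbits along $F'$: for $1\le i<j\le n-1$ the orbits stay near the separated points $y_i,y_j$, while $T^kz$ lies in $U'$ and hence stays at distance $>\delta/2$ from every $y_i$, which is close to $T^kx_i$. Since any superset of an IP-set is an IP-set, this establishes $n$-tuplewise IP-sensitivity with constant $\delta/4$.

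The main obstacle is the sufficiency direction, and specifically the manufacture of the $n$-th point $z$: the hypothesis supplies only $n-1$ points, so to upgrade to $n$-tuplewise sensitivity one must create a genuinely new point of $U$ whose orbit is separated from all the others along an IP-set. This is exactly what IP$^*$-centrality (through Lemma~\ref{lem:supp-IP=subset}) provides, once $U$ has been shrunk so that the hypothesis $d(x_i,y_i)>\delta$ forces each $y_i$ to sit far from $U$. A secondary technical point to handle carefully is that all the separations — among the $x_i$-orbits, and between the $z$-orbit and each $x_i$-orbit — must hold along one common IP-set, which is why I would extract $F$ from the simultaneous convergence in the product system \emph{before} invoking Lemma~\ref{lem:supp-IP=subset} to locate $z$.
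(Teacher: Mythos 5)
Your proof is correct and takes essentially the same route as the paper's: the sufficiency direction manufactures the $n$-th point from Lemma~\ref{lem:supp-IP=subset} applied to the IP-set coming from strong proximality, exactly as the paper does, and your necessity direction's pigeonhole (at most one index with $d(x_i,y_i)$ small, since the $y_i$ are $\delta$-separated while the $x_i$ are clustered) is the same argument the paper phrases as ``at most one $y_i$ lies in $B(z,\delta/2)$,'' followed by the same projection-of-subtuples observation. Only the bookkeeping constants differ ($\delta/4$ and $\delta/8$ versus the paper's $\delta/3$).
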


\begin{proof}
($\Leftarrow$)
Let $\delta>0$ be a constant with the desired property.
Fix an opene subset $U$ of $X$. Without loss of generality we assume that $\diam(U)<\frac{\delta}{3}$.
There exists $x_1,\dotsc,x_{n-1}\in U$ and $y_1,\dotsc,y_{n-1}\in X$, with $\min_{1\le i<j\le n-1}d(y_i,y_j)>\delta$ and $\min_{1\le i\le n-1}d(x_i, y_i)>\delta$ such that $(x_1,\dotsc, x_{n-1})$ is strongly proximal to $(y_1,\dotsc, y_{n-1})$.
Let $V=\{(z_1,\dotsc, z_{n-1})\in X^{n-1}\colon d(y_i,z_i)<\frac{\delta}{3},i=1,\dotsc,n-1\}$.
By Lemma~\ref{lem:strongly-proximal},
$F:=N((x_1,\dotsc, x_{n-1}),V)$ is an IP-set.
By Lemma~\ref{lem:supp-IP=subset}, there exists a point $x_n\in U$ and an IP-subset $F'$ of $F$ such that $F'\subset N(x_n,U)$.
For every $k\in F'$, $d(T^k x_i,y_i)<\frac{\delta}{3}$ for $i=1,\dotsc,n-1$ and  $T^k x_n\in U$.
Note that $\max_{1\leq i\leq n-1}d(T^k x_n, x_i)<\frac{\delta}{3}$,
$\min_{1\le i<j\le n-1}d(y_i,y_j)>\delta$ and $\min_{1\le i\le n-1}d(x_i, y_i)>\delta$, one has $\min_{1\le i<j \le n}d(T^kx_i,T^kx_j)>\frac{\delta}{3}$.
This implies that
\[
F'\subset \Bigl \{k\in\bbn\colon \min_{1\le i<j \le n}d(T^kx_i,T^kx_j)>\frac{\delta}{3}\Bigr\}.
\]
Then $\frac{\delta}{3}$ is an  $n$-tuplewise IP-sensitive constant for $(X,T)$.
		
($\Rightarrow$)
Let $\delta$ be an $n$-tuplewise IP-sensitive constant.
Fix an opene subset $U$ of $X$.
Take $z\in U$ and let $V=U\cap B(z,\frac{\delta}{4})$.
By Proposition~\ref{prop:strongly-proximal-IP-n}, there exist $x_1,\dotsc,x_n\in V$ and $y_1,\dotsc,y_n\in X$ such that $\min_{1\le i<j\le n}d(y_i,y_j)\geq\delta$ and $(x_1,\dotsc, x_{n})$ is strongly proximal to $(y_1,\dotsc, y_{n})$.
Without loss of generality we assume that $y_1,\dotsc, y_{n-1}\not\in B(z,\frac{\delta}{2})$ since there exists at most one of them in $B(z,\frac{\delta}{2})$.
Thus we have $\min_{1\le i<j\le n-1}d(x_i, y_i)>\frac{\delta}{4}$ and it is easy to see that $\frac{\delta}{4}$ is the constant as required.	
\end{proof}
	
The case $n=2$ of Proposition~\ref{prop:strongly-proximal-IP} can be regarded as an improvement of \cite[Proposition 4.15]{YY18}, which states as follows.

\begin{cor}
If a dynamical system $(X,T)$ is IP$^*$-central, then
$(X,T)$ is pairwise IP-sensitive if and only if there is $\delta>0$  with the property that for any opene subset $U$ of $X$, there exist $x\in U$ and $y\in X$, with $d(x,y)>\delta$ such that $x$ is strongly proximal to $y$.
\end{cor}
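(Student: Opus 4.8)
The plan is to derive the corollary as the special case $n=2$ of Proposition~\ref{prop:strongly-proximal-IP}, so essentially no new argument is needed; the only task is to check that the general statement degenerates correctly to the claimed one. First I would set $n=2$ in Proposition~\ref{prop:strongly-proximal-IP}. Then the index range $1\le i\le n-1$ reduces to the single value $i=1$, so the $(n-1)$-tuples $(x_1,\dotsc,x_{n-1})$ and $(y_1,\dotsc,y_{n-1})$ collapse to single points, which I would rename $x:=x_1\in U$ and $y:=y_1\in X$. Correspondingly the $(n-1)$-fold product system $(X^{n-1},T^{(n-1)})$ is just $(X,T)$ itself, so that strong proximality of the one-coordinate tuple $(x_1)$ to $(y_1)$ in $(X^{n-1},T^{(n-1)})$ is, by definition, strong proximality of $x$ to $y$ in $(X,T)$.

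Next I would verify that the two separation conditions degenerate as expected. The condition $\min_{1\le i<j\le n-1}d(y_i,y_j)>\delta$ is a minimum over the set of index pairs $\{(i,j)\colon 1\le i<j\le 1\}$, which is empty when $n-1=1$; hence this condition is vacuously satisfied and imposes no constraint. The remaining condition $\min_{1\le i\le n-1}d(x_i,y_i)>\delta$ becomes simply $d(x,y)>\delta$. Substituting these reductions into the equivalence furnished by Proposition~\ref{prop:strongly-proximal-IP} reproduces verbatim the asserted equivalence between pairwise IP-sensitivity and the existence of a constant $\delta>0$ such that every opene $U$ contains a point $x$ strongly proximal to some $y\in X$ with $d(x,y)>\delta$.

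There is no genuine obstacle here, since the entire content lives in the already-established proposition. The \textbf{only} delicate point is the book-keeping observation that the pairwise-distance condition on the $y_i$'s is void in the degenerate case $n-1=1$, so it drops out of the statement rather than forcing any unexpected requirement. Consequently I expect the proof to amount to a single sentence identifying the corollary as the $n=2$ instance of Proposition~\ref{prop:strongly-proximal-IP}, once this degeneracy has been recorded.
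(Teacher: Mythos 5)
Your proposal is correct and is exactly the paper's approach: the paper states this corollary as the case $n=2$ of Proposition~\ref{prop:strongly-proximal-IP} with no further argument, and your book-keeping (the pairwise condition on the $y_i$'s being vacuous for $n-1=1$, the remaining condition collapsing to $d(x,y)>\delta$, and strong proximality in $(X^{1},T^{(1)})$ being strong proximality in $(X,T)$) is precisely the degeneration check that justifies it.
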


\subsection{\texorpdfstring{$N$}{N}-tuplewise IP-sensitivity among factor maps}
We first show that $n$-tuplewise IP-sensitivity can be lifted by semi-open factor maps.

\begin{lem}\label{lem:IP-sen-semi-open}
	Let $\pi\colon (X,T)\to (Y,S)$ be a factor map and $n\geq 2$.
	If $(Y,S)$ is $n$-tuplewise IP-sensitive and $\pi$ is semi-open,
	then $(X,T)$ is also $n$-tuplewise IP-sensitive.
\end{lem}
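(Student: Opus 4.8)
The plan is to transport the $n$-tuplewise IP-sensitive constant of the factor $(Y,S)$ up to $(X,T)$ by exploiting the semi-openness of $\pi$ together with the uniform continuity of $\pi$. First I would fix an $n$-tuplewise IP-sensitive constant $\eta>0$ for $(Y,S)$. Since $\pi$ is continuous and $X$ is compact, $\pi$ is uniformly continuous, so there is $\delta>0$ such that $d_X(a,b)<\delta$ implies $d_Y(\pi(a),\pi(b))<\eta$; equivalently, $d_Y(\pi(a),\pi(b))\ge\eta$ forces $d_X(a,b)\ge\delta$. I claim this $\delta$ is an $n$-tuplewise IP-sensitive constant for $(X,T)$.

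Now fix an opene subset $U$ of $X$. Because $\pi$ is semi-open, $\pi(U)$ has non-empty interior, so we may choose an opene set $V\subset\pi(U)$. Applying $n$-tuplewise IP-sensitivity of $(Y,S)$ to $V$, we obtain points $w_1,\dotsc,w_n\in V$ such that
\[
F:=\Bigl\{k\in\bbn\colon \min_{1\le i<j\le n}d_Y(S^k w_i,S^k w_j)>\eta\Bigr\}
\]
is an IP-set. Since $V\subset\pi(U)$, for each $i$ we may pick a preimage $x_i\in U$ with $\pi(x_i)=w_i$. Then for every $k\in F$ and every pair $i<j$, using $\pi\circ T=S\circ\pi$ we have $d_Y\bigl(\pi(T^k x_i),\pi(T^k x_j)\bigr)=d_Y(S^k w_i,S^k w_j)>\eta$, and by the choice of $\delta$ this yields $d_X(T^k x_i,T^k x_j)\ge\delta$. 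Hence
\[
F\subset\Bigl\{k\in\bbn\colon \min_{1\le i<j\le n}d_X(T^k x_i,T^k x_j)\ge\delta\Bigr\},
\]
and since a superset of an IP-set is again an IP-set, the right-hand side is an IP-set. Therefore $\delta$ witnesses $n$-tuplewise IP-sensitivity of $(X,T)$.

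The argument is essentially routine modulo two technical points one should handle with care. The first, and the only genuine obstacle, is ensuring the chosen preimages $x_i$ actually lie in $U$: this is exactly where semi-openness is used, since it guarantees $\pi(U)$ contains an open set $V$ from which the $w_i$ are drawn, so each $w_i$ has a preimage inside $U$. Without semi-openness, a preimage in $U$ need not exist. The second point is the minor discrepancy between the strict inequality $>\delta$ in the definition and the non-strict $\ge\delta$ obtained above; this is resolved cosmetically by starting from a slightly larger constant (e.g. replacing the target bound with $\delta/2$, or choosing $\delta$ so that $d_Y(\pi(a),\pi(b))>\eta$ implies $d_X(a,b)>\delta$), so that the final inequality is strict. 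With these observations the proof is complete.
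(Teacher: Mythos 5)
Your proof is correct and follows essentially the same route as the paper's: fix an IP-sensitive constant for $(Y,S)$, use semi-openness to apply it inside $\mathrm{int}(\pi(U))$, lift the witnessing points to preimages in $U$, and transfer the separation back to $X$ via uniform continuity of $\pi$. The strict-versus-non-strict inequality issue you flag is real but cosmetic, and the paper sidesteps it exactly as you suggest, by phrasing uniform continuity with non-strict inequalities ($d(u,v)\le\eta$ implies $d(\pi(u),\pi(v))\le\delta$) so that the contrapositive yields a strict bound.
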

\begin{proof}
	Choose an $n$-tuplewise IP-sensitive constant $\delta$ for $(Y,S)$.
	Since $\pi$ is uniformly continuous, there is $\eta>0$
	such that for any $u,v\in X$, $d(u,v)\le \eta$ implies that
	$d(\pi(u),\pi(v))\le \delta$.
	
	Fix an opene subset $U$ of $X$.
	As $\pi$ is semi-open, $int(\pi(U))$ is not empty.
	There exist $y_1,\dotsc,y_n\in int(\pi(U))$ such that
	\[
	F:=\Bigl\{k\in \bbn \colon \min_{1\le i<j\le n}d(S^k y_i, S^k y_j)>\delta\Bigr\}
	\]
	is an IP-set.
	Pick $x_i\in U$ such that $\pi(x_i)=y_i$ for $i=1,\dotsc,n$.
	By the choice of $\eta$, we have
	\[
	F\subset \Bigl\{k\in \bbn\colon \min_{1\le i<j\le n}d(T^k x_i, T^k x_j)>\eta\Bigr\}.
	\]
	This implies that $(X,T)$ is $n$-tuplewise IP-sensitive.
\end{proof}

We have the following sufficient condition for $n$-tuplewise IP-sensitivity on minimal systems, which generalizes the result of \cite[Proposition 4.7]{YY18}.

\begin{thm} \label{thm:proximal-not-1-1-IP}
Let $\pi:(X,T)\to (Y,S)$ be a factor map between two minimal systems.
If $\pi$ is proximal but not almost one-to-one then $(X,T)$ is $n$-tuplewise IP-sensitive for any $n\ge 2$.
\end{thm}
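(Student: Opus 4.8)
The plan is to verify the criterion of Proposition~\ref{prop:strongly-proximal-IP-n}. Since a minimal system is IP$^*$-central, Proposition~\ref{prop:strongly-proximal-IP} applies and reduces the task to the following: fix a constant $\delta>0$; for every opene subset $U$ of $X$, produce points $x_1,\dotsc,x_{n-1}\in U$ and $y_1,\dotsc,y_{n-1}\in X$ with $\min_{1\le i<j\le n-1}d(y_i,y_j)>\delta$, $\min_{1\le i\le n-1}d(x_i,y_i)>\delta$, and $(x_1,\dotsc,x_{n-1})$ strongly proximal to $(y_1,\dotsc,y_{n-1})$ in $(X^{n-1},T^{(n-1)})$. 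The construction below is uniform in the number of targets, so the essential case is a single separated target (that is, $n=2$). To obtain strong proximality rather than mere coordinatewise proximality, I would fix a minimal idempotent $u$ in the Ellis semigroup $E(X,T)$: for every $z\in X$ the point $uz$ is minimal and $(z,uz)$ is proximal, so applying $u$ coordinatewise, $u\bar x:=(ux_1,\dotsc,ux_{n-1})$ is a minimal point of the product and $(\bar x,u\bar x)$ is proximal; by the observation that a point proximal to a minimal point is strongly proximal to it (stated just before Lemma~\ref{lem:strongly-proximal}), $\bar x$ is then strongly proximal to $u\bar x$. Thus everything reduces to choosing base points in $U$ whose $u$-images are pairwise $\delta$-separated and far from the base points.

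The separation is supplied by the hypothesis. For a factor map between minimal systems a single singleton fibre forces almost-injectivity, so under our assumption $\diam\pi^{-1}(y)>0$ for every $y$. This function is upper semicontinuous, hence by Baire category it is bounded below by some $\delta_0>0$ on an opene set $W\subset Y$, which I take to be a neighbourhood of a point $y_0$; minimality of $Y$ then makes $N(y_0,W)$ an IP-set, so the fibre over the orbit of $y_0$ stays $\delta_0$-large along an IP-set of times. Because $\pi$ is proximal, any two points of a common fibre form a proximal pair, and, $T$ being a homeomorphism of the minimal space $X$, I can flow an arbitrary $U$ forward into $\pi^{-1}(W)$, so that over suitable points of $U$ I obtain fibre pairs that become $\delta_0$-separated at prescribed times.

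The hard part will be to upgrade ``$\delta_0$-separated at prescribed (or infinitely many) times'' to ``strongly proximal to a $\delta$-separated tuple'', equivalently to arrange that the separation times form an \emph{IP}-set while all base points remain in the given, possibly very small, $U$. Two features make this delicate. First, $T^{k}$ is not isometric, so separations witnessed by different pairs at different moments cannot be amalgamated by a naive pigeonhole or compactness argument; the Ramsey property of IP-sets must be applied through the (continuous) evaluation maps on $E(X,T)$ rather than through metric closeness at time $0$. Second, a same-fibre pair can be strongly proximal only to the diagonal: its orbit closure lies in the proximal relation $R_\pi$, and since a proximal pair accumulates on the diagonal, every minimal subset of $R_\pi$ is the diagonal. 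Consequently the $\delta$-separated target must sit in \emph{distinct} fibres, and for small $U$ one must control \emph{which sheet} of the large fibre the chosen points are driven into; this sheet-selection is exactly where the idempotent $u$, Lemma~\ref{lem:supp-IP=subset} (returning an adjoined point to $U$ along an IP-subset of the separation times), and the Ramsey property of IP-sets have to be used in concert. I expect this IP-upgrade to be the main obstacle; once it is resolved for the pair case, the reduction of the first paragraph delivers $n$-tuplewise IP-sensitivity for every $n\ge2$.
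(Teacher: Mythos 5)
Your proposal is not a proof: it ends by naming the ``IP-upgrade'' as the main obstacle and leaving it unresolved, and that obstacle is precisely the content of the theorem. Moreover, the reduction you lean on is invalid. You claim the construction is uniform in the number of targets, so that the essential case is a single separated target ($n=2$); but $n$-tuplewise IP-sensitivity for $n\ge 3$ requires, via Proposition~\ref{prop:strongly-proximal-IP-n} (or Proposition~\ref{prop:strongly-proximal-IP}), a tuple of targets $y_1,\dotsc,y_n$ which are \emph{pairwise} $\delta$-separated and \emph{simultaneously} strongly proximal to a tuple of base points, and this cannot be assembled from the pair case. Concretely, the only separation your sketch extracts from the hypothesis is $\diam \pi^{-1}(y)>0$ (each fiber has at least two points), which can never feed an $n$-point argument for $n\ge 3$. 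The paper's proof must first show that every fiber is \emph{infinite} --- and this step genuinely uses the proximality hypothesis, via \cite[Proposition 3.8, Corollary 3.9]{LY20} (a proximal extension of minimal systems with one finite fiber is almost one-to-one) --- and then that the quantity $\phi_n(y)=\sup\bigl\{\min_{1\le i<j\le n} d(x_i,x_j)\colon x_1,\dotsc,x_n\in\pi^{-1}(y)\bigr\}$ is bounded below uniformly in $y$ (\cite[Proposition 3.4]{LY20}). Neither ingredient appears in your proposal.

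The difficulty you could not resolve (your ``sheet-selection'' problem) disappears if one reverses the order of choices, which is exactly what the paper does. Instead of fixing base points in $U$ and then trying to control where an idempotent sends them, first choose $y_0\in \mathrm{int}\,\pi(U)$ and points $u_1,\dotsc,u_n\in\pi^{-1}(y_0)$ with $\min_{i<j}d(u_i,u_j)>3\delta$; since $(X,T)$ is minimal, minimal points are dense in $(X^n,T^{(n)})$, so there is a minimal point $(y_1,\dotsc,y_n)$ with $y_i\in B(u_i,\delta)\cap\pi^{-1}(\mathrm{int}\,\pi(U))$ for each $i$; only \emph{then} pick $x_i\in U$ with $\pi(x_i)=\pi(y_i)$. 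Because $\pi$ proximal implies that $\pi^{(n)}\colon (X^n,T^{(n)})\to(Y^n,S^{(n)})$ is proximal, the tuple $(x_1,\dotsc,x_n)$ is proximal to the minimal point $(y_1,\dotsc,y_n)$, hence strongly proximal to it, and Proposition~\ref{prop:strongly-proximal-IP-n} concludes. With this order of quantifiers no Ramsey argument, no use of Lemma~\ref{lem:supp-IP=subset}, and no Ellis-semigroup bookkeeping is needed; the proximality of the extension supplies strong proximality for free, and the uniform lower bound on $\phi_n$ supplies the separation for every opene $U$ at once.
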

\begin{proof}
First we claim that $\pi^{-1}(y)$ is infinite for any $y\in Y$. Assume on the contrary there exists a point $y_0\in Y$ such that $\pi^{-1}(y_0)$ is finite.
Since $\pi$ is proximal, by \cite[Proposition 3.8, Corollary 3.9]{LY20},
there exists a point $y_1\in Y$ such that $\pi^{-1}(y_1)$ is a singleton, which contradicts to $\pi:X\to Y$ is not almost one-to-one.

Fix $n\geq 2$ and define
\begin{align*}
	\phi_n\colon Y &\to\mathbb{R}\\
	y&\mapsto \sup\Bigl\{\min_{1\le i<j\le n} d(x_i,x_j)\colon x_1,x_2,\dotsc,x_n\in \pi^{-1}(y)\Bigr\}.
\end{align*}
Then $\phi_n(y)>0$ for any $y\in Y$.
By \cite[Proposition 3.4]{LY20}, $\inf_{y\in Y} \phi_n(y)>0$.
Let $\delta=\frac{1}{4} \inf_{y\in Y} \phi_n(y)$.
For any opene set $U\subset X$, $int(\pi(U))\neq \emptyset$.
Since $\pi:(X,T)\to (Y,S)$ is proximal, the factor map $\pi:(X^{n},T^{(n)})\to (Y^{n},S^{(n)})$ is also proximal (see e.g. \cite[Lemma 4.3]{HKZ18}).
Choose  a point $y_0\in int(\pi(U))$. Since $\phi_n(y_0)> 3\delta>0$, we can find $u_1, \dotsc, u_n \in \pi^{-1}(y_0)$ such that $\min_{1\le i< j\le n} d(u_i,u_j)>3\delta$.
Let $W_i = B(u_i,\delta) \cap \pi^{-1}int(\pi (U))$, $i=1,\dotsc,n$.
Then $W_1,\dotsc, W_n$ are opene subsets of $X$ with
\[
\min_{1\le i<j\le n} d(W_i,W_j)>\delta.
\]
Since $(X,T)$ is minimal, $(X^{n},T^{(n)})$ has a dense set of minimal points.
Choose a minimal point $(y_1,\dotsc,y_n)\in W_1\times \dotsb \times W_n$ and points $x_i\in U$ with $\pi(x_i)=\pi(y_i)$ for $i=1,\dotsc,n$.
Since $(x_1,\dotsc,x_n)$ and $(y_1,\dotsc,y_n)$ are proximal with $(y_1,\dotsc,y_n)$ minimal point, $(x_1,\dotsc,x_n)$ is strongly proximal to $(y_1,\dotsc,y_n)$.
Therefore by Proposition~\ref {prop:strongly-proximal-IP-n}  $(X,T)$ is $n$-tuplewise IP-sensitive.
\end{proof}

It is shown in \cite[Theorem~C]{YY18} that
a minimal system $(X,T)$ is pairwise IP-sensitive if and only if
$\pi$ is not almost one-to-one, where $\pi\colon (X,T)\to (X_{d},T_d)$ be the factor map to its maximal distal factor.
It is easy to see that for a factor map $\pi\colon (X,T)\to (Y,S)$ between minimal systems, it is not almost one-to-one if and only if $\#(\pi^{-1}(y))\geq 2$ for any $y\in Y$.
The following proposition generalizes
the necessary condition of the above result to $n$-tuplewise IP-sensitivity, which is a necessary condition for $n$-tuplewise IP-sensitive system on the fiber of factor map to its maximal distal factor.

\begin{prop}\label{prop:tuplewise-IP-sen-distal-factor}
Let $\pi\colon (X,T)\to (X_{d},T_d)$ be the factor map to its maximal distal factor and $n\geq 2$.
If $(X,T)$ is $n$-tuplewise IP-sensitive then $\#(\pi^{-1}(z))\geq n$ for any $z\in X_d$, where $\#(\,\cdot\,)$ denotes the cardinality of a set.
\end{prop}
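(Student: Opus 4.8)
The plan is to reformulate $n$-tuplewise IP-sensitivity through strong proximality via Proposition~\ref{prop:strongly-proximal-IP-n}, and then run a localization-and-limit argument that forces $n$ pairwise $\delta$-separated points into the single fiber over a prescribed $z$. The starting observation is that, since $(X_d,T_d)$ is the maximal distal factor, $R_\pi$ is the smallest closed $T\times T$-invariant equivalence relation containing $P(X,T)$; in particular $P(X,T)\subseteq R_\pi$, so $\pi$ collapses every proximal pair. I would also note that if $(a_1,\dotsc,a_n)$ is strongly proximal to $(b_1,\dotsc,b_n)$ in $(X^n,T^{(n)})$, then along a common sequence $k_m$ one has $T^{k_m}a_i\to b_i$ and $T^{k_m}b_i\to b_i$ for every $i$, whence $d(T^{k_m}a_i,T^{k_m}b_i)\to 0$ and each coordinate pair $(a_i,b_i)$ is proximal. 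Combining the two facts, $(a_i,b_i)\in P(X,T)\subseteq R_\pi$, so $\pi(a_i)=\pi(b_i)$ for all $i$.

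Next, let $\delta>0$ be the constant furnished by Proposition~\ref{prop:strongly-proximal-IP-n}. I fix $z\in X_d$, choose $p\in\pi^{-1}(z)$ (nonempty since $\pi$ is surjective), and take a sequence of opene neighborhoods $U_\ell$ of $p$ with $\diam(U_\ell)\to 0$. For each $\ell$, Proposition~\ref{prop:strongly-proximal-IP-n} produces points $x_1^{(\ell)},\dotsc,x_n^{(\ell)}\in U_\ell$ and $y_1^{(\ell)},\dotsc,y_n^{(\ell)}\in X$ with $\min_{1\le i<j\le n}d(y_i^{(\ell)},y_j^{(\ell)})\ge\delta$ such that $(x_1^{(\ell)},\dotsc,x_n^{(\ell)})$ is strongly proximal to $(y_1^{(\ell)},\dotsc,y_n^{(\ell)})$. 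By the previous paragraph, $\pi(x_i^{(\ell)})=\pi(y_i^{(\ell)})$ for each $i$.

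Now I pass to the limit. Because $x_i^{(\ell)}\in U_\ell$, $p\in U_\ell$ and $\diam(U_\ell)\to 0$, we get $x_i^{(\ell)}\to p$, so by continuity $\pi(y_i^{(\ell)})=\pi(x_i^{(\ell)})\to\pi(p)=z$. Using compactness of $X$, I pass to a subsequence along which $y_i^{(\ell)}\to y_i^{*}$ for every $i=1,\dotsc,n$; continuity of $\pi$ then yields $\pi(y_i^{*})=z$, so each $y_i^{*}\in\pi^{-1}(z)$. The separation survives the limit, since $d(y_i^{*},y_j^{*})=\lim_\ell d(y_i^{(\ell)},y_j^{(\ell)})\ge\delta$ for all $i<j$; hence $y_1^{*},\dotsc,y_n^{*}$ are $n$ pairwise distinct points of $\pi^{-1}(z)$, and $\#(\pi^{-1}(z))\ge n$.

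I expect the main obstacle to be the single-fiber requirement: the sensitivity data only guarantees $\delta$-separated target points $y_i$ whose images $\pi(y_i)$ are close but a priori distinct. Localizing $U_\ell$ around a chosen preimage $p$ of $z$ is what forces the images to converge to the common value $z$ (through $\pi(x_i)=\pi(y_i)$), while the fact that $\delta$ is uniform in $\ell$ is what keeps the limit points distinct. It is worth emphasizing that this argument uses neither minimality nor semi-openness of $\pi$, relying only on $P(X,T)\subseteq R_\pi$, the continuity of $\pi$, and compactness of $X$.
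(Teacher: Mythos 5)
Your proposal is correct and follows essentially the same route as the paper: both invoke Proposition~\ref{prop:strongly-proximal-IP-n} on shrinking neighborhoods of a chosen point in $\pi^{-1}(z)$, use $P(X,T)\subseteq R_\pi$ (via the coordinatewise proximality of strongly proximal tuples) to force $\pi(x_i)=\pi(y_i)$, and then pass to a compactness limit in which the $\delta$-separation of the $y_i$'s persists. The only cosmetic difference is that the paper concludes via closedness of $R_\pi$ where you use continuity of $\pi$; these are the same step in different clothing.
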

\begin{proof}
Pick an $n$-tuplewise IP-sensitive constant $\delta$ for $(X,T)$.
Fix a point $z\in X_d$ and choose a point $x\in \pi^{-1}(z)$.
According to Proposition~\ref{prop:strongly-proximal-IP-n}, for any $m\in\bbn$ there exist
$x_1^{(m)},\dotsc, x_{n}^{(m)}\in B(x,\frac{1}{m})$ and $y_1^{(m)},\dotsc, y_{n}^{(m)}\in X$
such that
$\min_{1\le i<j \le n}d(y_i^{(m)},y_j^{(m)})\geq\delta$ and 	$(x_1^{(m)},\dotsc, x_{n}^{(m)})$ is strongly proximal to $(y_1^{(m)},\dotsc, y_{n}^{(m)})$.
Since $X$ is compact, without loss of generality assume that $y_i^{(m)}\to y_i$, as $m\to\infty$, $i=1,2,\dotsc,n$.
Thus $\min_{1\le i<j \le n}d(y_i,y_j)\geq\delta$.
For any $i=1,2,\dotsc,n$, $(x_i^{(m)},y_i^{(m)})\in P(X,T)\subset R_{\pi}$. Then $(x,y_i)\in R_{\pi}$ since $R_{\pi}$ is closed.
This implies that
$x,y_1,\dotsc,y_n\in \pi^{-1}(z)$.
Since
$y_1,\dotsc, y_{n}$ are pairwise distinct, $\#(\pi^{-1}(z))\geq n$.
\end{proof}

It is interesting to know whether the converse of Proposition \ref{prop:tuplewise-IP-sen-distal-factor} holds for minimal systems.
To be precise, we have the following question.

\begin{ques}
Let $\pi\colon (X,T)\to (X_{d},T_d)$ be the factor map to the maximal distal factor of a minimal system $(X,T)$ and $n\geq 3$.
Assume that $\#(\pi^{-1}(z))\geq n$ for any $z\in X_d$.
Is $(X,T)$ $n$-tuplewise IP-sensitive?
\end{ques}

\subsection{Dichotomy for pairwise IP-sensitivity and pairwise \texorpdfstring{IP$^*$}{IP*}-equicontinuity}
\label{subsec:dichotomy-IP}
In this subsection we study the opposite side of pairwise IP-sensitivity, named pairwise IP$^*$-equicontinuity.

Let $(X,T)$ be a dynamical system.
A point $x$ in $X$ is called \emph{pairwise IP$^*$-equicontinuous} if for any $\eps>0$ there exists a neighbourhood $U$ of $x$ such that for any $y,z\in U$, $\{k\in\bbn\colon d(T^k y, T^k z)<\eps\}$ is an IP$^*$-set. Denote by $\eq^{\ip^*}(X,T)$ the collection of all pairwise IP$^*$-equicontinuous points in $X$.
A dynamical system $(X,T)$ is called \emph{pairwise IP$^*$-equicontinuous} if $\eq^{\ip^*}(X,T)=X$ and
\emph{almost pairwise IP$^*$-equicontinuous}
if $\eq^{\ip^*}(X,T)$ is residual in $X$.	

Since the intersection of two IP$^*$-sets is also an IP$^*$-set, we have the following observations.

\begin{lem}
Let $(X,T)$ be a dynamical system and $x\in X$.
Then $x$ is pairwise IP$^*$-equicontinuous if and only if for any $\eps>0$ there exists a neighbourhood $U$ of $x$ such that for any $y\in U$, $\{k\in\bbn\colon d(T^k x, T^k y)<\eps\}$ is an IP$^*$-set.
\end{lem}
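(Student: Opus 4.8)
The plan is to prove the two implications separately; the forward direction is a trivial specialization, and essentially all the content sits in the converse.

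For the forward implication, suppose $x$ is pairwise IP$^*$-equicontinuous in the sense of the definition. Given $\eps>0$, choose a neighbourhood $U$ of $x$ so that $\{k\in\bbn\colon d(T^k y,T^k z)<\eps\}$ is an IP$^*$-set for all $y,z\in U$. Since $U$ is a neighbourhood of $x$, we have $x\in U$, so I would simply take $z=x$ to obtain that $\{k\in\bbn\colon d(T^k x,T^k y)<\eps\}$ is an IP$^*$-set for every $y\in U$, which is exactly the stated one-variable condition.

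For the converse, the plan is to use a halving of $\eps$ together with the triangle inequality and the filter property of IP$^*$-sets recalled in Section~2. Fix $\eps>0$ and apply the hypothesis with $\eps/2$ to obtain a neighbourhood $U$ of $x$ such that for every $w\in U$ the set $A_w:=\{k\in\bbn\colon d(T^k x,T^k w)<\eps/2\}$ is an IP$^*$-set. Now take arbitrary $y,z\in U$. Since the collection of IP$^*$-sets has the filter property, $A_y\cap A_z$ is again an IP$^*$-set, and for each $k\in A_y\cap A_z$ the triangle inequality gives
\[
d(T^k y,T^k z)\le d(T^k y,T^k x)+d(T^k x,T^k z)<\tfrac{\eps}{2}+\tfrac{\eps}{2}=\eps,
\]
so $A_y\cap A_z\subset\{k\in\bbn\colon d(T^k y,T^k z)<\eps\}$. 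Because any superset of an IP$^*$-set is again an IP$^*$-set, being immediate from the definition since it still meets every IP-set, the set $\{k\in\bbn\colon d(T^k y,T^k z)<\eps\}$ is an IP$^*$-set. As $y,z\in U$ were arbitrary, the same neighbourhood $U$ witnesses pairwise IP$^*$-equicontinuity of $x$ at level $\eps$, completing the converse.

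There is no serious obstacle here; the argument is a standard $\eps/2$ triangle-inequality computation. The only point worth flagging is that it relies on two closure properties of the family of IP$^*$-sets, namely closure under finite intersection (the filter property) and closure under passing to supersets, and it is precisely the interplay of these two that allows one to reduce the symmetric two-point condition in the definition to the asymmetric one-point condition in the statement.
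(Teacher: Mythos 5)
Your proof is correct and matches the paper's intended argument: the paper gives this lemma without a written proof, prefacing it only with the remark that the intersection of two IP$^*$-sets is again an IP$^*$-set, which is exactly the filter-property-plus-triangle-inequality argument (with the $\eps/2$ halving and the superset closure of IP$^*$-sets) that you spell out.
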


\begin{lem}\label{IP-star-eq-product}
Assume that $(X,T)$ and $(Y,S)$ are two dynamical systems.
\begin{enumerate}
    \item If $x\in X$ and $y\in Y$ are pairwise IP$^*$-equicontinuous
    in $(X,T)$ and $(Y,S)$ respectively,
    then $(x,y)\in X\times Y$ is pairwise IP$^*$-equicontinuous
    in $(X\times Y,T\times S)$.
    \item If both $(X,T)$ and $(Y,S)$ are pairwise IP$^*$-equicontinuous,
    then so is $(X\times Y,T\times S)$.
    \item If both $(X,T)$ and $(Y,S)$ are almost pairwise IP$^*$-equicontinuous,
    then so is $(X\times Y,T\times S)$.
\end{enumerate}
\end{lem}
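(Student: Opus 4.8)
The plan is to reduce all three statements to two elementary facts about IP$^*$-sets that are already available: the filter property (the intersection of two IP$^*$-sets is again an IP$^*$-set) and the fact that IP$^*$-sets are upward closed (a set containing an IP$^*$-set still meets every IP-set, hence is IP$^*$). For part~(1) I would equip $X\times Y$ with the maximum metric $D\bigl((x,y),(x',y')\bigr)=\max\{d_X(x,x'),d_Y(y,y')\}$; any equivalent product metric works after a harmless rescaling of $\eps$. Fix $\eps>0$. Using that $x$ is pairwise IP$^*$-equicontinuous in $(X,T)$, choose a neighbourhood $U$ of $x$ so that for all $x_1,x_2\in U$ the set $\{k\in\bbn\colon d_X(T^kx_1,T^kx_2)<\eps\}$ is an IP$^*$-set; symmetrically choose a neighbourhood $V$ of $y$ for $(Y,S)$. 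Then $U\times V$ is a neighbourhood of $(x,y)$, and for any $(x_1,y_1),(x_2,y_2)\in U\times V$ the relevant time set for the product pair is exactly
\[
\{k\colon d_X(T^kx_1,T^kx_2)<\eps\}\cap\{k\colon d_Y(S^ky_1,S^ky_2)<\eps\},
\]
an intersection of two IP$^*$-sets. By the filter property this intersection is again an IP$^*$-set, so $(x,y)\in\eq^{\ip^*}(X\times Y,T\times S)$, proving~(1).

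Part~(2) is then immediate: if every point of $X$ and every point of $Y$ is pairwise IP$^*$-equicontinuous, part~(1) gives that every point of $X\times Y$ is, i.e.\ $\eq^{\ip^*}(X\times Y,T\times S)=X\times Y$. For part~(3) I would use the containment $\eq^{\ip^*}(X,T)\times\eq^{\ip^*}(Y,S)\subset\eq^{\ip^*}(X\times Y,T\times S)$ furnished by part~(1), and then invoke the standard Baire-category fact that the product of two residual sets is residual in the product of two Baire spaces (both factors, being compact metric, are Baire). Concretely, if $A\subset X$ and $B\subset Y$ are residual, then $A\times Y$ and $X\times B$ are residual in $X\times Y$ (a dense $G_\delta$ of one factor crossed with the whole other space remains a dense $G_\delta$), whence $A\times B=(A\times Y)\cap(X\times B)$ is residual as the intersection of two residual sets. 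Taking $A=\eq^{\ip^*}(X,T)$ and $B=\eq^{\ip^*}(Y,S)$ shows that $\eq^{\ip^*}(X\times Y,T\times S)$ contains a residual set, hence is itself residual.

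I do not anticipate a serious obstacle. The only points requiring a little care are the choice of product metric and the Baire-category step. For the metric, if one prefers the sum metric $d_X+d_Y$ rather than the maximum metric, one simply selects the neighbourhoods $U$ and $V$ for the tolerance $\eps/2$ and then uses upward closure of IP$^*$-sets, since $\{d_X<\eps/2\}\cap\{d_Y<\eps/2\}$ is an IP$^*$-set contained in the good set $\{d_X+d_Y<\eps\}$. The Baire-category step is routine once one observes that each factor is a compact metric space and therefore a Baire space, and that the two sets $A\times Y$ and $X\times B$ are built from dense open sets of the factors.
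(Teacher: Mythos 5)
Your proof is correct and follows exactly the route the paper intends: the paper states this lemma as an immediate observation from the filter property of IP$^*$-sets (giving no written proof), and your argument—intersecting the two good time sets via the filter property for parts (1)–(2), plus the standard fact that a product of residual sets is residual for part (3)—is precisely the filled-in version of that observation.
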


Now we will characterize the distal property via this weaker form of equicontinuity, first we need some lemmas.

\begin{lem}\label{lem:distal-system-IP-star-eq}
Let $(X,T)$ be a dynamical system. If $(X,T)$ is distal, then it is pairwise IP$^*$-equicontinuous.
\end{lem}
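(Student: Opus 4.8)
The plan is to show directly that every point of a distal system is pairwise IP$^*$-equicontinuous, which then gives $\eq^{\ip^*}(X,T)=X$. The key input is the characterization of distal points already recalled in the excerpt: by \cite[Theorem 9.11]{F81}, a point $x$ is distal if and only if for every neighbourhood $U$ of $x$, $N(x,U)$ is an IP$^*$-set. Since $(X,T)$ is distal, every point is distal, so this characterization is available at every point.

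Fix $\eps>0$ and a point $x\in X$. First I would pass to the product system $(X\times X, T\times T)$, which is again distal (the diagonal-related argument: if $(x,y)$ and $(x',y')$ were a proximal pair in the product, projecting to each coordinate would produce a proximal pair in $(X,T)$, forcing $x=x'$ and $y=y'$). Consequently the diagonal point $(x,x)$ is a distal point of $(X\times X,T\times T)$, so for the neighbourhood $W=\{(u,v)\in X\times X\colon d(u,v)<\eps\}$ of $(x,x)$, the set $N\bigl((x,x),W\bigr)=\{k\in\bbn\colon d(T^kx,T^kx)<\eps\}$ is trivially all of $\bbn$; this alone is not enough, so the real work is to control nearby \emph{pairs} $y,z$ rather than just $x$ itself.

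The better route is uniform. Because $(X,T)$ is distal and $X$ is compact, distality is equivalent to \emph{uniform} distality: for each $\eps>0$ there is $\eta>0$ such that $\min_k d(T^ky,T^kz)\ge\eta$ whenever $d(y,z)\ge\eps$ fails to go to zero — more usefully, I would invoke the product formulation. Consider the point $(x,x)$ in $(X\times X, T\times T)$ and the neighbourhood $W_\eps=\{(u,v)\colon d(u,v)<\eps\}$. By the definition of pairwise IP$^*$-equicontinuity, I must produce a neighbourhood $U$ of $x$ so that for all $y,z\in U$ the set $\{k\colon d(T^ky,T^kz)<\eps\}$ is an IP$^*$-set. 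Take any pair $y,z$; the point $(y,z)\in X\times X$ is distal (every point of the distal product is distal), hence by the characterization $N\bigl((y,z),W_\eps\bigr)=\{k\colon d(T^ky,T^kz)<\eps\}$ is an IP$^*$-set \emph{provided} $(y,z)$ itself lies in $W_\eps$, i.e. $d(y,z)<\eps$. So choosing $U$ to be any ball of radius $\eps/2$ about $x$ guarantees $d(y,z)<\eps$ for $y,z\in U$, and then $W_\eps$ is a neighbourhood of the distal point $(y,z)$, making $N\bigl((y,z),W_\eps\bigr)$ an IP$^*$-set. This is exactly the required condition.

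The main obstacle is bookkeeping about \emph{which} point the IP$^*$-recurrence is attached to: the distality characterization gives an IP$^*$-set $N(p,V)$ for a neighbourhood $V$ of the distal point $p$, so I must ensure the target neighbourhood genuinely contains the point $(y,z)$, which forces the radius of $U$ to be taken small enough that $d(y,z)<\eps$. Verifying that $(X\times X, T\times T)$ is distal (equivalently that each $(y,z)$ is a distal point) is the one structural fact to check, but it is routine via coordinate projections and the invariance of proximality. Assembling these pieces yields that every $x\in X$ is pairwise IP$^*$-equicontinuous, so $\eq^{\ip^*}(X,T)=X$ and $(X,T)$ is pairwise IP$^*$-equicontinuous.
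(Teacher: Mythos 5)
Your argument is correct, but it takes a different route from the paper's. The paper works entirely inside $(X,T)$: for $y,z\in U=B(x,\eps/2)$, it applies the Furstenberg characterization (a point $p$ is distal iff $N(p,V)$ is an IP$^*$-set for every neighbourhood $V$ of $p$) to each of $y$ and $z$ separately with the \emph{same} set $U$, obtaining that $N(y,U)$ and $N(z,U)$ are IP$^*$-sets, and then invokes the filter property of IP$^*$-sets to conclude that $F=N(y,U)\cap N(z,U)$ is an IP$^*$-set; for $k\in F$ both iterates lie in $U$, so $d(T^ky,T^kz)<\eps$. You instead pass to the product $(X\times X,T\times T)$, check it is distal (your coordinate-projection argument is the right one: proximality in the product forces simultaneous proximality in each coordinate, hence equality), and then apply the same Furstenberg characterization once, to the distal point $(y,z)$ and the neighbourhood $W_\eps=\{(u,v)\colon d(u,v)<\eps\}$, which directly identifies $\{k\colon d(T^ky,T^kz)<\eps\}=N\bigl((y,z),W_\eps\bigr)$ as an IP$^*$-set; your choice $U=B(x,\eps/2)$ correctly guarantees $(y,z)\in W_\eps$ so that $W_\eps$ really is a neighbourhood of $(y,z)$. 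What each approach buys: the paper's version needs the filter property of IP$^*$-sets but no structural facts about products, while yours avoids the filter property at the cost of the (routine but nontrivial) lemma that a product of distal systems is distal. Note that your first two exploratory steps (the diagonal point $(x,x)$, and the aborted appeal to uniform distality) do no work in the final argument and could be deleted; the proof stands on the product-distality observation alone.
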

\begin{proof}
Fix a point $x\in X$ and $\eps>0$.
Let $U=B(x,\frac{\eps}{2})$ and $y,z\in U$.
As $y,z$ is distal, $N(y,U)$ and $N(z,U)$ are IP$^*$-sets.
Let $F=N(y,U)\cap N(z,U)$. Then $F$ is also an IP$^*$-set and for any $k\in F$, $d(T^k y,T^k z)<\diam (U)<\eps$.
This implies that $x$ is pairwise IP$^*$-equicontinuous.
As $x$ is arbitrary,
$(X,T)$ is pairwise IP$^*$-equicontinuous.
\end{proof}

\begin{lem}\label{lem:IP-star-eq-point-distal}
Let $(X,T)$ be an IP$^*$-central system and $x\in X$.
If $x$ is pairwise IP$^*$-equicontinuous, then $x$ is distal.
\end{lem}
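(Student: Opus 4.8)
The plan is to use the characterization of distality recalled in the preliminaries: a point $x$ is distal if and only if $N(x,U)$ is an IP$^*$-set for every neighborhood $U$ of $x$. So it suffices to fix a neighborhood $U$ of $x$, pick $\eps>0$ with $B(x,\eps)\subset U$, and show that $N(x,B(x,\eps))$ meets every IP-set; since any superset of an IP$^*$-set is again IP$^*$ and $N(x,B(x,\eps))\subset N(x,U)$, this yields that $N(x,U)$ is an IP$^*$-set.

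First I would unwind the pairwise IP$^*$-equicontinuity of $x$ through the preceding lemma: applied to $\frac{\eps}{2}>0$, it produces an open neighborhood $W$ of $x$, which I may shrink so that $W\subset B(x,\frac{\eps}{2})$, with the property that for every $y\in W$ the set $\{k\in\bbn\colon d(T^k x,T^k y)<\frac{\eps}{2}\}$ is an IP$^*$-set.

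The crucial bridge is the IP$^*$-centrality hypothesis, exploited through Lemma~\ref{lem:supp-IP=subset}. Given an arbitrary IP-set $F$, that lemma applied to $F$ and the opene set $W$ furnishes an IP-subset $F'\subset F$ and a point $z\in W$ with $F'\subset N(z,W)$. Since $z\in W$, the set $G=\{k\in\bbn\colon d(T^k x,T^k z)<\frac{\eps}{2}\}$ is an IP$^*$-set, so it meets the IP-set $F'$; choose $k\in F'\cap G$. For this $k$ one has $T^k z\in W\subset B(x,\frac{\eps}{2})$ and $d(T^k x,T^k z)<\frac{\eps}{2}$, whence $d(T^k x,x)<\eps$ by the triangle inequality, i.e.\ $k\in N(x,B(x,\eps))$. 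Thus $F$ meets $N(x,B(x,\eps))$.

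Since $F$ was an arbitrary IP-set, $N(x,B(x,\eps))$ is an IP$^*$-set, hence so is $N(x,U)$, and therefore $x$ is distal. I expect the main obstacle to be exactly this bridging step: pairwise IP$^*$-equicontinuity controls only the distance between the orbits of two nearby points, not the return times of $x$ to its own neighborhoods, and IP$^*$-centrality (via Lemma~\ref{lem:supp-IP=subset}) is precisely what supplies a point $z$ near $x$ whose orbit revisits $W$ along an IP-subset of any prescribed IP-set, thereby converting orbit-closeness of $z$ to $x$ into genuine recurrence of $x$ itself.
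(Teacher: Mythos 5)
Your proof is correct, but it runs in the opposite direction from the paper's. You argue directly: combining pairwise IP$^*$-equicontinuity with Lemma~\ref{lem:supp-IP=subset} applied to an \emph{arbitrary} IP-set $F$ and the equicontinuity neighbourhood $W$, you show that every $N(x,B(x,\eps))$ meets $F$, hence is IP$^*$, and then you conclude distality from Furstenberg's characterization (a point is distal iff $N(x,U)$ is IP$^*$ for every neighbourhood $U$), which the paper does cite in its preliminaries. The paper instead argues by contradiction through the proximality machinery: if $x$ is not distal, it is strongly proximal to a minimal point $y\neq x$ in its orbit closure (Lemma~\ref{lem:orbit-minimal-proximal}), so by Lemma~\ref{lem:strongly-proximal} the set $N(x,B(y,\delta))$ with $\delta=\tfrac{1}{3}d(x,y)$ is an IP-set; Lemma~\ref{lem:supp-IP=subset} is then applied to \emph{this specific} IP-set and the opene set $U\cap B(x,\delta)$ to produce a point $v$ near $x$ whose orbit returns to $B(x,\delta)$ exactly at times when $T^kx$ is near $y$, forcing $d(T^kv,T^kx)>\delta$ along an IP-set and contradicting IP$^*$-equicontinuity. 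Both proofs hinge on the same bridge, Lemma~\ref{lem:supp-IP=subset}, and you correctly identified it as the crucial step. What the two routes buy: yours is shorter, needs no contradiction and no proximality lemmas, but leans on the nontrivial ``IP$^*$-recurrent implies distal'' direction of the cited theorem of Furstenberg; the paper's version is more self-contained, in effect re-deriving that direction from Lemmas~\ref{lem:orbit-minimal-proximal} and~\ref{lem:strongly-proximal} within the argument.
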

\begin{proof}
Assume that $x$ is not distal. Then there exists a minimal point $y\in \overline{\orb(x,T)}\setminus\{x\}$ such that $(x,y)$ is proximal.
As $y$ is minimal, $x$ is strongly proximal to $y$.
Let $\delta=\frac{1}{3}d(x,y)$.
By Lemma~\ref{lem:strongly-proximal}, $N(x,B(y,\delta))$ is an IP-set.
As $x$ is pairwise IP$^*$-equicontinuous, there exists a neighborhood $U$ of $x$
such that for any $z\in U$,
$\{k\in\bbn\colon d(T^k x,T^k z)<\delta\}$ is an IP$^*$-set.
However, by Lemma~\ref{lem:supp-IP=subset},
there exists $v\in U\cap B(x,\delta)$ such that $N(v, U\cap B(x,\delta))$
is an IP-subset of $N(x,B(y,\delta))$.
For any $k\in N(v, U\cap B(x,\delta))$,
$d(T^k v,x)<\delta$ and $d(T^k x, y)<\delta$.
Thus $d(T^k v,T^k x)>\delta$ which is a contradiction.
\end{proof}

Combining the above two lemmas, we have the following interesting result.
\begin{thm}
An IP$^*$-central system is IP$^*$-equicontinuous if and only if it is distal.
\end{thm}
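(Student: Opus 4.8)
The plan is to prove the final theorem simply by stitching together the two lemmas that immediately precede it, observing that the hypothesis ``IP$^*$-central'' is used in only one direction. Recall the statement asserts that an IP$^*$-central system $(X,T)$ is pairwise IP$^*$-equicontinuous if and only if it is distal. First I would prove the direction $(\Leftarrow)$: if $(X,T)$ is distal, then by Lemma~\ref{lem:distal-system-IP-star-eq} it is pairwise IP$^*$-equicontinuous. Note that this implication holds for \emph{every} dynamical system and does not require the IP$^*$-central hypothesis at all, so nothing further is needed here.

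For the forward direction $(\Rightarrow)$, I would assume $(X,T)$ is IP$^*$-central and pairwise IP$^*$-equicontinuous, meaning $\eq^{\ip^*}(X,T)=X$. Then every point $x\in X$ is pairwise IP$^*$-equicontinuous, so by Lemma~\ref{lem:IP-star-eq-point-distal} (which is exactly where the IP$^*$-central assumption is invoked) every point $x\in X$ is distal. Since a dynamical system is distal if and only if every point is distal, we conclude that $(X,T)$ is distal. This closes the equivalence.

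I do not anticipate a genuine obstacle, since the theorem is essentially a packaging of Lemmas~\ref{lem:distal-system-IP-star-eq} and~\ref{lem:IP-star-eq-point-distal}; the only point requiring a moment's care is to make sure the quantifier over points is handled correctly. Lemma~\ref{lem:IP-star-eq-point-distal} is stated pointwise (for a single $x\in X$), so to conclude that the whole system is distal I must apply it to each $x\in X$ and then invoke the standard fact, recorded earlier in the excerpt, that distality of the system is equivalent to distality of every point. If one wanted to be thorough about the ``only if'' direction one could also remark that the IP$^*$-central hypothesis is indispensable there, since Lemma~\ref{lem:IP-star-eq-point-distal} fails without it; but for the proof itself a clean two-line argument citing the two lemmas suffices.

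\begin{proof}
If $(X,T)$ is distal, then by Lemma~\ref{lem:distal-system-IP-star-eq} it is pairwise IP$^*$-equicontinuous.

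Conversely, assume that $(X,T)$ is IP$^*$-central and pairwise IP$^*$-equicontinuous, so that $\eq^{\ip^*}(X,T)=X$. Fix any $x\in X$. Then $x$ is pairwise IP$^*$-equicontinuous, so by Lemma~\ref{lem:IP-star-eq-point-distal} the point $x$ is distal. Since $x\in X$ was arbitrary, every point of $X$ is distal, and therefore $(X,T)$ is distal.
\end{proof}
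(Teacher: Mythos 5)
Your proof is correct and is exactly the paper's argument: the paper simply states the theorem as a combination of Lemma~\ref{lem:distal-system-IP-star-eq} (distal implies pairwise IP$^*$-equicontinuous, no IP$^*$-centrality needed) and Lemma~\ref{lem:IP-star-eq-point-distal} applied pointwise, together with the fact that a system is distal if and only if every point is distal. Your handling of the pointwise quantifier matches the intended reading, so there is nothing to add.
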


As every minimal system is IP$^*$-central, we have the following corollary.

\begin{cor}\label{cor:min-distal-ip-star-eq}
A minimal system is IP$^*$-equicontinuous if and only if it is distal.
\end{cor}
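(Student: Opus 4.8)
The plan is to deduce Corollary~\ref{cor:min-distal-ip-star-eq} as an immediate consequence of Corollary~\ref{cor:min-distal-ip-star-eq}'s parent theorem (the unnumbered \textbf{Theorem} stating that an IP$^*$-central system is IP$^*$-equicontinuous if and only if it is distal) together with the standing observation, already recorded in Section~2, that every minimal system is IP$^*$-central. Since the corollary is obtained merely by specializing a biconditional to a subclass of systems, the argument is short: first I would invoke the fact that a minimal system $(X,T)$ admits an invariant measure (or, more elementarily, that for any opene $U$ the return-time set $N(U,U)$ is an IP$^*$-set by Poincar\'e recurrence), so that $(X,T)$ is IP$^*$-central; then I would apply the parent theorem verbatim.

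In slightly more detail, the forward and backward directions split as follows. For the direction ``distal $\Rightarrow$ IP$^*$-equicontinuous,'' one does not even need minimality: Lemma~\ref{lem:distal-system-IP-star-eq} already shows that any distal system is pairwise IP$^*$-equicontinuous, by taking $U=B(x,\eps/2)$ and using that $N(y,U)\cap N(z,U)$ is an IP$^*$-set (an intersection of two IP$^*$-sets) whenever $y,z$ are distal points. For the converse, ``IP$^*$-equicontinuous $\Rightarrow$ distal,'' I would use Lemma~\ref{lem:IP-star-eq-point-distal}, which asserts that in an IP$^*$-central system every pairwise IP$^*$-equicontinuous point is distal; applied pointwise to every $x\in X$ (all points are IP$^*$-equicontinuous since $\eq^{\ip^*}(X,T)=X$), this yields that every point of $X$ is distal, i.e.\ $(X,T)$ is distal. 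The only hypothesis consumed in this converse direction is IP$^*$-centrality, which minimality supplies.

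The ``hard part,'' such as it is, lies entirely in the two preparatory lemmas rather than in the corollary itself. In Lemma~\ref{lem:IP-star-eq-point-distal} the crux is the contradiction argument: assuming $x$ is not distal produces a minimal point $y\in\overline{\orb(x,T)}\setminus\{x\}$ with $(x,y)$ proximal, hence (as $y$ is minimal) $x$ strongly proximal to $y$, so that $N(x,B(y,\delta))$ is an IP-set by Lemma~\ref{lem:strongly-proximal}; IP$^*$-centrality then lets Lemma~\ref{lem:supp-IP=subset} locate a point $v$ near $x$ whose return-time set into a small neighborhood of $x$ is an IP-subset of $N(x,B(y,\delta))$, which collides with the IP$^*$-equicontinuity of $x$ at scale $\delta=\tfrac13 d(x,y)$. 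Once those lemmas are in hand, the corollary is genuinely a one-line specialization, so I would simply state that minimality implies IP$^*$-centrality and cite the preceding theorem.

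\begin{proof}
Every minimal system is IP$^*$-central. Indeed, a minimal system admits an invariant measure of full support, so by the Poincar\'e recurrence theorem $N(U,U)$ is an IP$^*$-set for every opene subset $U$ of $X$, as noted in Section~2. The assertion now follows immediately by applying the preceding theorem to the minimal, hence IP$^*$-central, system $(X,T)$.
\end{proof}
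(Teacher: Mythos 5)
Your proposal is correct and matches the paper's argument exactly: the paper derives Corollary~\ref{cor:min-distal-ip-star-eq} in one line by noting that every minimal system is IP$^*$-central (via an invariant measure of full support and Poincar\'e recurrence, as recorded in Section~2) and then applying the preceding theorem, whose two directions are Lemma~\ref{lem:distal-system-IP-star-eq} and Lemma~\ref{lem:IP-star-eq-point-distal} just as you describe. Nothing in your write-up deviates from or adds a gap to the paper's route.
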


\begin{lem}\label{lem:distal-almost-1-1}
Let $\pi\colon (X,T)\to (Y,S)$ be a factor map.
If $\pi$ is almost one-to-one and $(Y,S)$ is distal,
then $(X,T)$ is almost pairwise IP$^*$-equicontinuous.
\end{lem}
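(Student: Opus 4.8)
The plan is to show that every injectivity point of $\pi$ is pairwise IP$^*$-equicontinuous. Since $\pi$ is almost one-to-one, the set $X_0=\{x\in X\colon \pi^{-1}(\pi(x))\text{ is a singleton}\}$ is residual in $X$, so establishing $X_0\subset \eq^{\ip^*}(X,T)$ immediately gives that $\eq^{\ip^*}(X,T)$ is residual, i.e.\ that $(X,T)$ is almost pairwise IP$^*$-equicontinuous.

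First I would record a purely topological fact about injectivity points: if $\pi^{-1}(\pi(x))=\{x\}$, then for every $\eps>0$ there is $\delta>0$ with $\pi^{-1}(B(\pi(x),\delta))\subset B(x,\frac{\eps}{2})$. This is a standard compactness argument: were it to fail, there would be points $z_m$ with $\pi(z_m)\to\pi(x)$ but $d(z_m,x)\ge\eps$, and any limit point $z$ of $\{z_m\}$ would satisfy $\pi(z)=\pi(x)$ with $z\ne x$, contradicting that the fiber over $\pi(x)$ is a singleton. The consequence I want is that \emph{any} two points whose $\pi$-images lie in $B(\pi(x),\delta)$ are within $\eps$ of each other in $X$.

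Second, I would convert the distality of $(Y,S)$ into an IP$^*$-statement about returns near $\pi(x)$. Since $(Y,S)$ is distal, Lemma~\ref{lem:distal-system-IP-star-eq} shows it is pairwise IP$^*$-equicontinuous, so there is a neighbourhood $W$ of $\pi(x)$ such that for every $a\in W$ the set $\{k\in\bbn\colon d(S^k a,S^k\pi(x))<\frac{\delta}{2}\}$ is an IP$^*$-set; moreover, as $\pi(x)$ is itself a distal point, $N(\pi(x),B(\pi(x),\frac{\delta}{2}))$ is an IP$^*$-set by Furstenberg's characterization \cite[Theorem~9.11]{F81}. Intersecting these two IP$^*$-sets and applying the triangle inequality shows that for every $a\in W$ the return set $N(a,B(\pi(x),\delta))$ contains an IP$^*$-set, hence is itself IP$^*$ (any superset of an IP$^*$-set is IP$^*$).

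Finally I would assemble these pieces. Taking $U=\pi^{-1}(W)$, which is an open neighbourhood of $x$, for any $u,v\in U$ the images $\pi(u),\pi(v)$ lie in $W$, so $N(\pi(u),B(\pi(x),\delta))\cap N(\pi(v),B(\pi(x),\delta))$ is an intersection of two IP$^*$-sets and thus IP$^*$. For each $k$ in this set both $\pi(T^k u)$ and $\pi(T^k v)$ lie in $B(\pi(x),\delta)$, so by the topological fact above $T^k u,T^k v\in B(x,\frac{\eps}{2})$ and hence $d(T^k u,T^k v)<\eps$; therefore $\{k\in\bbn\colon d(T^k u,T^k v)<\eps\}$ is IP$^*$, which is exactly pairwise IP$^*$-equicontinuity of $x$. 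I expect the only genuine subtlety to be the second step: the singleton fiber sits over $\pi(x)$ and not over a general $S^k\pi(x)$, so it is not enough to keep the two image orbits close to each other—one must force them back into a fixed neighbourhood of $\pi(x)$. This is precisely what the intersection with $N(\pi(x),\cdot)$ accomplishes, and it explains why distality of $(Y,S)$, rather than some weaker recurrence property, is needed.
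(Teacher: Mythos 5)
Your proof is correct and takes essentially the same approach as the paper's: both restrict to the residual set of injectivity points, shrink a neighborhood $V$ of $\pi(x)$ so that $\pi^{-1}(V)\subset B(x,\tfrac{\eps}{2})$, use distality of $(Y,S)$ to force both image orbits to return to $V$ along an IP$^*$-set, and then pull back through $\pi$. The only (harmless) difference is local: where you route through Lemma~\ref{lem:distal-system-IP-star-eq}, Furstenberg's characterization at $\pi(x)$, and a triangle inequality, the paper gets the same conclusion in one stroke by observing that the open set $V$ is itself a neighborhood of $\pi(u)$ and $\pi(v)$, so $N(\pi(u),V)\cap N(\pi(v),V)$ is IP$^*$ directly by Furstenberg's theorem and the filter property.
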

\begin{proof}
Let $X_0=\{x\in X\colon \pi^{-1}(\pi(x))=\{x\}\}$.
As $\pi$ is almost one-to-one, $X_0$ is residual in $X$.
It is sufficient to show that $x$ is pairwise IP$^*$-equicontinuous.
Fix $\eps>0$. There exists a neighborhood $V$ of $\pi(x)$
such that $\pi^{-1}(V)\subset B(x,\frac{\eps}{2})$.
Pick a neighborhood $U$ of $x$ with $\pi(U)\subset V$.
For any $u,v\in U$, $\pi(u),\pi(v)\in V$.
As $\pi(u)$ and $\pi(v)$ are distal,
\[F:=\{k\in\bbn\colon S^k \pi(u), S^k \pi(v)\in V\}
\]
is an IP$^*$-set.
For any $k\in F$, $T^k u,T^k v\in \pi^{-1}(V) \subset B(x,\frac{\eps}{2})$,
and then $d(T^k u,T^k v)<\eps$.
This implies that $x$ is pairwise IP$^*$-equicontinuous.
\end{proof}

We have the following dichotomy result for minimal systems.

\begin{thm} \label{thm:minimal-dichotomy-IP}
Let $(X,T)$ be a minimal system. Then $(X,T)$ is either pairwise IP-sensitive or almost pairwise IP$^*$-equicontinuous.
\end{thm}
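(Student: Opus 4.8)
The plan is to reduce everything to the structure theorem for pairwise IP-sensitivity of minimal systems, namely \cite[Theorem~C]{YY18}, rather than to attempt a direct Auslander--Yorke style argument. Let $\pi\colon (X,T)\to (X_d,T_d)$ be the factor map to the maximal distal factor, which exists for every system. According to \cite[Theorem~C]{YY18}, the minimal system $(X,T)$ is pairwise IP-sensitive if and only if $\pi$ is not almost one-to-one. This equivalence immediately splits the argument into two cases.

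In the first case, $(X,T)$ is pairwise IP-sensitive, and then the first alternative of the dichotomy holds with nothing further to prove. In the second case, $(X,T)$ is not pairwise IP-sensitive, so by \cite[Theorem~C]{YY18} the map $\pi$ must be almost one-to-one. Since the target $(X_d,T_d)$ is distal and $\pi$ is an almost one-to-one extension of it, Lemma~\ref{lem:distal-almost-1-1} applies verbatim and yields that $(X,T)$ is almost pairwise IP$^*$-equicontinuous. Together these two cases show that at least one of the two alternatives always holds.

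To confirm that the statement is a genuine dichotomy, I would also check that the two alternatives are mutually exclusive. Suppose, for contradiction, that $(X,T)$ were simultaneously pairwise IP-sensitive, with sensitive constant $\delta$, and almost pairwise IP$^*$-equicontinuous. Choosing an IP$^*$-equicontinuous point $x$ and invoking its defining property with $\eps=\delta$ produces a neighbourhood $U$ of $x$ such that $\{k\in\bbn\colon d(T^k y,T^k z)<\delta\}$ is an IP$^*$-set for all $y,z\in U$. Sensitivity then supplies $x_1,x_2\in U$ for which $\{k\in\bbn\colon d(T^k x_1,T^k x_2)>\delta\}$ is an IP-set. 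These two sets are disjoint, contradicting the fact that an IP$^*$-set meets every IP-set.

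The point worth emphasising, and the reason the classical technique is not available here, is that the collection of IP$^*$-sets is not translation invariant. In the Auslander--Yorke dichotomy one spreads a single equicontinuity point across the whole space using transitivity, but the analogue for IP$^*$-equicontinuity would require shifting an IP$^*$-set by $T^k$, which need not remain an IP$^*$-set. This is exactly the obstacle that forces the whole proof through the structure theorem \cite[Theorem~C]{YY18}; I expect that if one tried instead to propagate an IP$^*$-equicontinuous point directly, the argument would break down at precisely this non-invariance, so routing through the maximal distal factor is what makes the dichotomy go through cleanly.
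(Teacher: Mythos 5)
Your proof is correct and follows essentially the same route as the paper: both split on whether the factor map $\pi\colon (X,T)\to (X_d,T_d)$ is almost one-to-one, invoking \cite[Theorem~C]{YY18} for the pairwise IP-sensitive alternative and Lemma~\ref{lem:distal-almost-1-1} for the almost pairwise IP$^*$-equicontinuous alternative. Your extra verification that the two alternatives are mutually exclusive (via the disjointness of an IP-set and an IP$^*$-set) is a correct addition, though the paper does not include it.
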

\begin{proof}
Let $\pi\colon (X,T)\to (X_d,T_d)$ be the factor map to its maximal distal factor.
If $\pi$ is almost one-to-one, then by Lemma~\ref{lem:distal-almost-1-1} $(X,T)$ is
almost pairwise IP$^*$-equicontinuous.
If $\pi$ is not almost one-to-one, then by
\cite[Theorem~C]{YY18} $(X,T)$ is
pairwise IP-sensitive.
\end{proof}

\begin{cor} \label{cor:minimal-pairwise-IP-star-eq}
Let $(X,T)$ be a minimal system and $\pi\colon (X,T)\to (X_{d},T_d)$ be the factor map to its maximal distal factor. Then the following statements are equivalent:
\begin{enumerate}
\item $(X,T)$ is almost pairwise IP$^*$-equicontinuous;
\item $(X,T)$ is point-distal and $P(X,T)$ is closed;
\item $\pi$ is almost one-to-one.
\end{enumerate}
\end{cor}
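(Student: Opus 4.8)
The plan is to establish the cycle of implications $(3)\Rightarrow(1)\Rightarrow(2)\Rightarrow(3)$, which is the most economical route given the tools already assembled in the excerpt. The implication $(3)\Rightarrow(1)$ is essentially free: if $\pi$ is almost one-to-one, then since $(X_d,T_d)$ is a distal factor (hence a distal system), Lemma~\ref{lem:distal-almost-1-1} applies directly to give that $(X,T)$ is almost pairwise IP$^*$-equicontinuous. So the substantive work lies in the other two implications.

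For $(1)\Rightarrow(2)$, I would argue in two parts. First, to see that $(X,T)$ is point-distal: since $\eq^{\ip^*}(X,T)$ is residual and $(X,T)$ is minimal (hence IP$^*$-central, as noted after the definition of IP$^*$-central), Lemma~\ref{lem:IP-star-eq-point-distal} shows that every pairwise IP$^*$-equicontinuous point is distal; thus the residual set $\eq^{\ip^*}(X,T)$ consists of distal points, so distal points exist and $(X,T)$ is point-distal. Second, to see that $P(X,T)$ is closed, I would use the dichotomy of Theorem~\ref{thm:minimal-dichotomy-IP}: an almost pairwise IP$^*$-equicontinuous minimal system cannot be pairwise IP-sensitive (these are mutually exclusive, since on a residual set of points the relevant return-time sets are IP$^*$, contradicting the IP-sensitive constant witnessing an IP-set of separation times on every open set). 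Hence by the proof of that dichotomy via \cite[Theorem~C]{YY18}, the factor map $\pi\colon(X,T)\to(X_d,T_d)$ is almost one-to-one. An almost one-to-one extension of a distal (hence minimal and in particular with closed proximal relation equal to the diagonal) system has proximal relation $P(X,T)=R_\pi$: indeed $R_\pi\supseteq P(X,T)$ always (fibers of the map to the maximal distal factor contain the proximal cells), and the reverse inclusion holds because two points with the same image are proximal when $\pi$ is proximal, which an almost one-to-one extension of a distal system is. Since $R_\pi$ is closed, $P(X,T)$ is closed.

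For $(2)\Rightarrow(3)$, I would assume $(X,T)$ is point-distal with $P(X,T)$ closed and show $\pi$ is almost one-to-one. Since $P(X,T)$ is closed, by Lemma~\ref{lem:proximal-closed} it is an equivalence relation, and it is $T\times T$-invariant and closed, so it is itself an invariant closed equivalence relation containing $P(X,T)$; by minimality of the smallest such relation, $P(X,T)=R_\pi$ and $(X_d,T_d)=X/P(X,T)$. Now take a distal point $x_0\in X$ (which exists by point-distality). Its fiber $\pi^{-1}(\pi(x_0))$ is the proximal cell $P[x_0]=\{y\colon(x_0,y)\in P(X,T)\}$; but $x_0$ being distal means no $y\neq x_0$ is proximal to $x_0$, so $P[x_0]=\{x_0\}$ and the fiber is a singleton. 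By the criterion for almost one-to-one maps between minimal systems recalled in the preliminaries (a single singleton fiber suffices, \cite[Corollary~3.6]{LY20}), $\pi$ is almost one-to-one.

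The main obstacle I anticipate is the careful handling of the equivalence $P(X,T)=R_\pi$ under the closedness hypothesis in $(2)\Rightarrow(3)$ and, symmetrically, the proximality of $\pi$ in $(1)\Rightarrow(2)$: one must verify that the closed invariant equivalence relation $P(X,T)$ is genuinely the \emph{smallest} one containing the proximal relation and hence equals the kernel of the maximal distal factor map, rather than merely being contained in it. This rests on the characterization of the maximal distal factor recalled in the preliminaries (it corresponds to the smallest invariant closed equivalence relation containing $P(X,T)$, \cite[Theorem~2]{EG60}), so when $P(X,T)$ is already closed and an equivalence relation it coincides with that smallest relation. Once this identification is in place, the singleton-fiber argument at a distal point closes the loop cleanly.
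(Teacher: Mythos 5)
Your proof is correct, and its logical core for $(1)\Leftrightarrow(3)$ coincides with the paper's: both rest on Theorem~\ref{thm:minimal-dichotomy-IP} (equivalently, Lemma~\ref{lem:distal-almost-1-1} for one direction) together with \cite[Theorem~C]{YY18}, with the mutual exclusivity of pairwise IP-sensitivity and almost pairwise IP$^*$-equicontinuity (your IP versus IP$^*$ disjointness remark) supplying the contrapositive step. Where you genuinely diverge is in handling statement $(2)$: the paper disposes of $(2)\Leftrightarrow(3)$ with a single citation to \cite[Page 600]{Vries1993}, whereas you reprove that classical equivalence from the paper's own toolkit --- for $(2)\Rightarrow(3)$ you use Lemma~\ref{lem:proximal-closed} plus the \cite[Theorem 2]{EG60} characterization of $R_\pi$ as the smallest closed invariant equivalence relation containing $P(X,T)$ to get $P(X,T)=R_\pi$, and then the singleton fiber at a distal point together with \cite[Corollary 3.6]{LY20}; for the $P(X,T)$-closed half of $(1)\Rightarrow(2)$ you pass through almost one-to-one-ness and invoke proximality of $\pi$, and you get point-distality from Lemma~\ref{lem:IP-star-eq-point-distal}. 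Your route buys self-containedness and makes the mechanism explicit, at the cost of being longer and of relying on one classical fact you assert but do not prove: that an almost one-to-one factor map between \emph{minimal} systems is proximal (note that minimality of both systems, not distality of the base, is what drives this --- take a singleton fiber $\pi^{-1}(y_0)=\{x_0\}$, push any pair $x_1,x_2$ with $\pi(x_1)=\pi(x_2)$ along times $n_i$ with $S^{n_i}\pi(x_1)\to y_0$, and conclude both limit points equal $x_0$). Also note that your Lemma~\ref{lem:IP-star-eq-point-distal} argument for point-distality is slightly redundant given the rest: once $\pi$ is almost one-to-one and proximal, any point in a singleton fiber is automatically distal.
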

\begin{proof}
(1)$\Leftrightarrow$(3) follows from
Theorem \ref{thm:minimal-dichotomy-IP} and \cite[Theorem~C]{YY18};
(2)$\Leftrightarrow$(3) follows from \cite[Page 600]{Vries1993}.
\end{proof}

\begin{prop}\label{prop:almost-1-1-IP-star-eq}
Let $\pi\colon (X,T)\to (Y,S)$ be a factor map between minimal systems.
\begin{enumerate}
    \item If $(X,T)$ is almost pairwise IP$^*$-equicontinuous then  so is $(Y,S)$.
    \item If $\pi$ is almost one-to-one, then $(X,T)$ is almost pairwise IP$^*$-equicontinuous if and only if so is $(Y,S)$.
\end{enumerate}
\end{prop}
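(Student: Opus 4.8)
The plan is to reduce everything to the characterisation of almost pairwise IP$^*$-equicontinuity for minimal systems obtained in Corollary~\ref{cor:minimal-pairwise-IP-star-eq}, namely that a minimal system is almost pairwise IP$^*$-equicontinuous if and only if the factor map onto its maximal distal factor is almost one-to-one, equivalently if and only if it is point-distal with closed proximal relation. Throughout I write $\pi_X\colon(X,T)\to(X_d,T_d)$ and $\pi_Y\colon(Y,S)\to(Y_d,S_d)$ for the factor maps onto the maximal distal factors (recall $(Y,S)$ is minimal as a factor of a minimal system). Since $(Y_d,S_d)$ is a distal factor of $(Y,S)$ and hence of $(X,T)$, the maximality of $(X_d,T_d)$ yields a factor map $\sigma\colon(X_d,T_d)\to(Y_d,S_d)$ with $\sigma\circ\pi_X=\pi_Y\circ\pi$, and I will use this commutative square repeatedly. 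The single nontrivial input is the classical lifting of proximal pairs along a homomorphism of minimal flows, $(\pi\times\pi)(P(X,T))=P(Y,S)$ (see, e.g., \cite{A88}): the inclusion $\subseteq$ is immediate from uniform continuity, while the reverse inclusion rests on the structure theory of minimal flows (minimal idempotents of the enveloping semigroup, or Lemma~\ref{lem:orbit-minimal-proximal}), and this is the step I expect to be the main obstacle to a fully self-contained argument.

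For part (1), assume $(X,T)$ is almost pairwise IP$^*$-equicontinuous, so by Corollary~\ref{cor:minimal-pairwise-IP-star-eq} it is point-distal and $P(X,T)$ is closed. First I would show $(Y,S)$ is point-distal by pushing forward a distal point: fix a distal point $x_0\in X$ and check that $\pi(x_0)$ is distal. Indeed, if $(\pi(x_0),y')$ were a nontrivial proximal pair, the proximal lifting above would produce $x'\in\pi^{-1}(y')$ with $(x_0,x')\in P(X,T)$, and distality of $x_0$ would force $x'=x_0$, hence $y'=\pi(x_0)$, a contradiction. Next I would show $P(Y,S)$ is closed: since $P(Y,S)=(\pi\times\pi)(P(X,T))$ and $P(X,T)$ is a closed, hence compact, subset of $X\times X$, its image under the continuous map $\pi\times\pi$ is compact and therefore closed. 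With $(Y,S)$ point-distal and $P(Y,S)$ closed, Corollary~\ref{cor:minimal-pairwise-IP-star-eq} gives that $(Y,S)$ is almost pairwise IP$^*$-equicontinuous.

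For part (2), the forward implication is exactly part (1) (and does not even use that $\pi$ is almost one-to-one), so it remains to prove that if $\pi$ is almost one-to-one and $(Y,S)$ is almost pairwise IP$^*$-equicontinuous then so is $(X,T)$. By Corollary~\ref{cor:minimal-pairwise-IP-star-eq}, $\pi_Y$ is almost one-to-one. The key observation is that a composition of almost one-to-one factor maps between minimal systems is again almost one-to-one: writing $X_1=\{x\in X\colon\pi^{-1}(\pi(x))=\{x\}\}$ and $Y_1=\{y\in Y\colon\pi_Y^{-1}(\pi_Y(y))=\{y\}\}$, both are residual (dense $G_\delta$), and since $\pi$ is semi-open the set $\pi^{-1}(Y_1)$ is residual as well, so $X_1\cap\pi^{-1}(Y_1)\neq\emptyset$; any $x$ in this intersection satisfies $(\pi_Y\circ\pi)^{-1}(\pi_Y(\pi(x)))=\{x\}$. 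Hence $\pi_Y\circ\pi$ is almost one-to-one, and by the commutative square so is $\sigma\circ\pi_X$. Finally, if $(\sigma\circ\pi_X)^{-1}(w)=\{x\}$ is a singleton then $\pi_X^{-1}(\pi_X(x))\subseteq(\sigma\circ\pi_X)^{-1}(\sigma(\pi_X(x)))=\{x\}$, so $\pi_X$ is almost one-to-one, and Corollary~\ref{cor:minimal-pairwise-IP-star-eq} yields that $(X,T)$ is almost pairwise IP$^*$-equicontinuous.

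As indicated, the only place where genuine work is hidden is the surjectivity of $\pi\times\pi$ on proximal relations used in part (1); everything else is elementary manipulation of singleton fibres, compactness, and the universal property of the maximal distal factor. An alternative to invoking proximal lifting as a black box would be to run the argument behind Lemma~\ref{lem:orbit-minimal-proximal} inside $X\times X$, together with the fact that a minimal point of $X\times X$ that is proximal must lie on the diagonal; but arranging the resulting lift to lie over the prescribed pair $(\pi(x_0),y')$ is exactly the delicate point, and I would present it through the enveloping semigroup if a self-contained treatment were required.
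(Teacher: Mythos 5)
Your argument is correct in substance, but it takes a genuinely different route from the paper's, and it leans on one classical result that the paper never needs. For part (1) the paper does not touch the proximal relation at all: it simply notes that $\pi$ is semi-open (being a factor map between minimal systems), so by Lemma~\ref{lem:IP-sen-semi-open} pairwise IP-sensitivity of $(Y,S)$ would lift to $(X,T)$, and then the dichotomy of Theorem~\ref{thm:minimal-dichotomy-IP} forces $(Y,S)$ to be almost pairwise IP$^*$-equicontinuous --- two lines, entirely internal to the paper. Your route instead pushes the characterization of Corollary~\ref{cor:minimal-pairwise-IP-star-eq} (point-distal plus closed proximal relation) through $\pi$, which requires the classical lifting of proximal pairs along homomorphisms of minimal flows; that result is true and can be found in \cite{A88}, but it is exactly the enveloping-semigroup input the paper's argument avoids. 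For part (2) the two proofs share the key step (the composition, your $\pi_Y\circ\pi$, the paper's $\theta\circ\pi$, is almost one-to-one), but then diverge: the paper applies Lemma~\ref{lem:distal-almost-1-1} directly to $\theta\circ\pi\colon(X,T)\to(Y_d,S_d)$, whose target is distal, and is done; you instead descend through the commutative square to conclude that $\pi_X\colon(X,T)\to(X_d,T_d)$ is almost one-to-one and then invoke Corollary~\ref{cor:minimal-pairwise-IP-star-eq} again. Both are valid; your detour has the merit of actually proving the composition fact (residuality of $X_1\cap\pi^{-1}(Y_1)$ via semi-openness, plus the singleton-fibre criterion for minimal systems), which the paper asserts without proof, and your final manipulation $\pi_X^{-1}(\pi_X(x))\subseteq(\sigma\circ\pi_X)^{-1}(w)=\{x\}$ is correct.

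One point in part (1) needs repair. You state proximal lifting in the set form $(\pi\times\pi)(P(X,T))=P(Y,S)$, but your point-distality argument uses more: given the proximal pair $(\pi(x_0),y')$, you need the lifted proximal pair to have first coordinate equal to the prescribed point $x_0$, not merely some point of the fibre $\pi^{-1}(\pi(x_0))$. The set equality only yields a pair $(x_1,x')\in P(X,T)$ with $\pi(x_1)=\pi(x_0)$, and since $x_1$ need not be distal, no contradiction with distality of $x_0$ follows. The fix is to cite the classical theorem in its standard, stronger form: if $\pi$ is a homomorphism of minimal flows, $(y_1,y_2)\in P(Y,S)$ and $x_1\in\pi^{-1}(y_1)$, then there exists $x_2\in\pi^{-1}(y_2)$ with $(x_1,x_2)\in P(X,T)$; this fibre-prescribed version is what is actually proved in \cite{A88} via minimal idempotents, and with it your argument goes through verbatim. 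As written, the statement you quote is too weak for the use you make of it, so this is a citation mismatch rather than a fatal gap --- but it is precisely at the step you yourself flagged as delicate.
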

\begin{proof}(1) Note that $\pi$ is semi-open, as it is a factor map between minimal systems.
	By Lemma~\ref{lem:IP-sen-semi-open}	
	and Theorem~\ref{thm:minimal-dichotomy-IP}, $(Y,S)$ is almost pairwise IP$^*$-equicontinuous.

	(2) Now assume that $(Y,S)$ is almost pairwise IP$^*$-equicontinuous.
	Let $\theta\colon (Y,S)\to (Y_d,S_d)$ be the factor map to its maximal distal factor.
	By Corollary~\ref{cor:minimal-pairwise-IP-star-eq}  $\theta$ is almost one-to-one.
	As $\pi$ is almost one-to-one, $\theta\circ \pi\colon (X,T)\to (Y_d,S_d)$ is also almost one-to-one.
	By Lemma~\ref{lem:distal-almost-1-1} $(X,T)$ is almost pairwise IP$^*$-equicontinuous.
\end{proof}

\subsection{Maximal almost pairwise $IP^*$-equicontinuous factor}
We know that every dynamical system has a maximal equicontinuous factor. Now we will show that every minimal system $(X,T)$ admits a maximal almost pairwise IP$^*$-equicontinuous factor.

\begin{thm}\label{thm:max-ip-*}
Each minimal system $(X,T)$ admits a maximal almost pairwise $IP^*$-equicontinuous factor.
\end{thm}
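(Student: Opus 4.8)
The plan is to construct the desired factor explicitly as the largest almost one-to-one extension of the maximal distal factor lying below $(X,T)$, and then to check that this single factor dominates every almost pairwise IP$^*$-equicontinuous factor. Let $\pi\colon(X,T)\to(X_d,T_d)$ be the factor map onto the maximal distal factor, with relation $R_\pi$; recall that $(X_d,T_d)$ is distal, that every distal factor of $(X,T)$ is a factor of $(X_d,T_d)$, and that, since $(X,T)$ is minimal, all factor maps in sight are semi-open and a factor map between minimal systems is almost one-to-one as soon as one fibre is a singleton. I would let $\mathcal{A}$ be the family of invariant closed equivalence relations $R\subseteq R_\pi$ for which $X/R\to X_d$ is almost one-to-one; it is non-empty because $R_\pi\in\mathcal{A}$, and I set $R^{*}=\bigcap_{R\in\mathcal{A}}R$ and $Y^{*}=X/R^{*}$.

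First I would prove that $R^{*}\in\mathcal{A}$, so that $Y^{*}$ is the maximal member of $\mathcal{A}$. Using that $Y^{*}$ is compact metrizable, I can pick a countable subfamily $\{R_i\}\subseteq\mathcal{A}$ whose quotient maps already separate the points of $Y^{*}$, so that $\bigcap_i R_i=R^{*}$ and $Y^{*}$ embeds $T$-equivariantly into $\prod_i (X/R_i)$ over $X_d$. For each $i$ the set of points of $X/R_i$ with singleton fibre over $X_d$ is residual, and pulling it back through the semi-open map $X\to X/R_i$ gives a residual subset of $X$; a countable intersection of these is residual, hence non-empty by Baire, and any point $x$ in it produces a single $\xi=\pi(x)\in X_d$ over which every $X/R_i$ has a one-point fibre. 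The fibre of $Y^{*}\to X_d$ over $\xi$ is then contained in a product of singletons, so it is a singleton, and therefore $Y^{*}\to X_d$ is almost one-to-one. Since $X_d$ is distal, Lemma~\ref{lem:distal-almost-1-1} shows that $Y^{*}$ is almost pairwise IP$^*$-equicontinuous.

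It remains to show that an arbitrary almost pairwise IP$^*$-equicontinuous factor $p\colon(X,T)\to(Y,S)$ is a factor of $Y^{*}$. By Corollary~\ref{cor:minimal-pairwise-IP-star-eq} the map $\beta\colon Y\to Y_d$ onto the maximal distal factor of $Y$ is almost one-to-one, and by maximality of $X_d$ there is a factor map $\gamma\colon X_d\to Y_d$ with $\gamma\circ\pi=\beta\circ p$. I would consider $W=X/(R_p\cap R_\pi)$, which embeds in $Y\times X_d$ and lands in $\{(y,\xi)\colon\beta(y)=\gamma(\xi)\}$. Choosing $x\in X$ generically so that $p(x)$ has singleton $\beta$-fibre (a residual condition, again pulled back through the semi-open map $p$), the fibre of $W\to X_d$ over $\pi(x)$ is a singleton, because any $(p(x'),\pi(x'))\in W$ with $\pi(x')=\pi(x)$ forces $\beta(p(x'))=\gamma(\pi(x))=\beta(p(x))$ and hence $p(x')=p(x)$. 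Thus $W\to X_d$ is almost one-to-one, i.e. $R_p\cap R_\pi\in\mathcal{A}$, whence $R^{*}\subseteq R_p\cap R_\pi\subseteq R_p$ and $(Y,S)$ is a factor of $Y^{*}$. Together with the previous paragraph this makes $Y^{*}$ the maximal almost pairwise IP$^*$-equicontinuous factor.

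I expect the main difficulty to be the two stability statements for almost one-to-oneness, namely that $R^{*}$ and each $R_p\cap R_\pi$ remain in $\mathcal{A}$, since almost one-to-oneness is a residuality (hence fragile) condition that need not survive naive intersections of relations. The mechanism that rescues both is the same: reduce to a countable family, translate the singleton-fibre information into residual subsets of the Baire space $X$ via semi-openness of the factor maps, intersect there, and then read off a single base point with a one-point fibre, invoking that one singleton fibre already forces almost one-to-oneness for minimal systems. The point requiring care is the bookkeeping that a one-point fibre for the limit object over $\xi$ really follows from one-point fibres of all the approximants over the common base point $\xi=\pi(x)$.
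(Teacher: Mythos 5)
Your proposal is correct and takes essentially the same route as the paper's proof: both realize the maximal factor as the maximal almost one-to-one extension of the maximal distal factor $(X_d,T_d)$, obtained as the quotient by the intersection $R^*$ of all invariant closed equivalence relations in $\mathcal{A}$, and both prove maximality of that factor by showing $R_p\cap R_\pi\in\mathcal{A}$ via Corollary~\ref{cor:minimal-pairwise-IP-star-eq} and the singleton-fibre criterion for almost one-to-oneness between minimal systems. The only deviation is that where the paper cites de Vries for the stability of almost one-to-one extensions under the (countable) intersection/inverse limit, you prove that fact directly by pulling residual sets of singleton-fibre points back through semi-open factor maps and applying Baire's theorem, which is a self-contained verification of the same cited step rather than a different argument.
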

\begin{proof} Let $\psi: (X,T)\rightarrow (X_d,T_d)$ be a factor map to its maximal distal factor. 
	Let $\mathcal{A}$ consist all the  closed $T\times T$  invariant equivalence relation $R$ on  $X$ such that $(X/R, T_R)$ is an almost one to one extension of $(X_d,T_d)$. Since $R_\psi \in \mathcal{A}$,  $\mathcal{A}$ is not empty.
	Let $R^*=\cap\{R:R\in \mathcal{A}\}$, then $R^*$ is closed $T\times T$  invariant equivalence relation. Since $X$ is a compact metric space,
	then there exists a countable $R_i\in \mathcal{A}$ such that $R^*=\cap_{i=1}^\infty R_i$.
	Since $(X/R^*,T_{R^*})$ is an inverse limit of $(X/R_i,T_{R_i})$ with $(X/R_i,T_{R_i})$  an almost one to one extension of $(X_d,T_d)$, then
	$(X/R^*,T_{R^*})$ is an almost one to one extension of $(X_d,T_d)$ (see\cite[Page 592]{Vries1993}),
	$R^*\in \mathcal{A}$. Thus $(X/R^*,T_{R^*})$ is the maximal almost one to one extension of $(X_d,T_d)$.
By Proposition
	~\ref{prop:almost-1-1-IP-star-eq},
	$(X/R^*,T_{R^*})$ is almost pairwise $IP^*$-equicontinuous.
	\[\xymatrix{
		(X,T) \ar[drrrrrr]^{\pi_2} \ar[dr]^{\pi_3} \ar[d]^{\pi_1}  \ar[drrdrrrr]^{\psi}         \\
		(Z,W) \ar[d]^{\theta_1}
		&(Y,S)\ar[l]^{\gamma_1} \ar[drrrrr]^{\gamma_2}
		&&&&& (X/R^*,T_{R^*})\ar[d]^{\theta_2} \\
		(Z_d,W_d)
		&&&&&&(X_d,T_d)  \ar[llllll]^{\phi}
	}\]
	We next show that $(X/R^*,T_{R^*})$ is the maximal almost pairwise $IP^*$-equicontinuous factor of $(X,T)$.
	
	Let $(Z,W)$ be an almost pairwise $IP^*$ equicontinuous factor of $(X,T)$. We are sufficient to show that $(Z,W)$ is a factor
	of $(X/R^*,T_{R^*})$.
	Let $\pi_1:(X,T)\rightarrow (Z,W)$
	and $\theta_1:(Z,W)\rightarrow (Z_d,W_d)$
	be factor maps.
	By Corollary~\ref{cor:minimal-pairwise-IP-star-eq}, $\theta_1$ is almost one to one.
	Denote $ R_{\pi_3}=R_{\pi_1}\cap R_\psi$ and let $(Y,S)=(X/R_{\pi_3},T_{R_{\pi_3}})$.
	It is clear that we have the following factor maps:
	$\gamma_1:(Y,S)\rightarrow (Z,W)$
	and
	$\gamma_2:(Y,S)\rightarrow (X_d,T_d)$. If $\gamma_2$ is almost one to one, then $(Y,S)$ is a factor of $(X/R^*,T_{R^*})$ and $(Z,W)$ is a factor
	of $(X/R^*,T_{R^*})$.
	
Now we show that $\gamma_2$ is almost one to one. 
	Since $\theta_1$ is almost one to one,
	there exists $x\in X$ such that
	$R_{\theta_1\circ \pi_1}(x)=R_{\pi_1}(x)$. It easy to see that
	$$R_\psi(x)\subset R_{\phi \circ \psi}(x) = R_{{\theta_1}\circ \pi_1}(x)=R_{\pi_1}(x).$$
	Thus $R_{\gamma_2 \circ \pi_3}(x)=R_\psi(x)=R_\psi(x)\cap R_{\pi_1}(x)=(R_{\pi_1}\cap R_\psi)(x)=R_{\pi_3}(x)$.
	Therefore $\gamma_2$ is almost one to one.
\end{proof}

We end this section by leaving the following question:

\begin{ques}
For a minimal system $(X,T)$, how to describe the $T\times T$-invariant closed equivalence relation on $X$ determined by the maximal almost pairwise IP$^*$-equicontinuous factor of $(X,T)$?
\end{ques}

\section{\texorpdfstring{$N$}{N}-tuplewise thick sensitivity}

Let $(X,T)$ be a dynamical system and $n\geq 2$.
We say that $(X,T)$ is \emph{$n$-tuplewise thickly sensitive}
if there exists a constant $\delta>0$ with the property that for any opene subset $U$ of $X$ there exist $x_1,\dots,x_n\in U$
such that
\[
  \Bigl\{ k\in\bbn\colon \min_{1\le i<j\le n}d(T^k x_i,T^k x_j) >\delta\Bigr \}
\]
is a thick set. The constant $\delta$ is called an \emph{$n$-tuplewise thickly sensitive constant} for $(X,T)$.
For simplicity, $2$-tuplewise thick sensitivity will be called \emph{pairwise thick sensitivity}.

Since each thick set is an IP-set, every $n$-tuplewise thickly sensitive system is also $n$-tuplewise IP-sensitive.

\subsection{Properties of \texorpdfstring{$n$}{N}-tuplewise thick sensitivity}
For transitive systems, we have the following characterization for $n$-tuplewise thickly sensitivity.

\begin{lem}\label{lem:transitive-thick-sen}
Let $(X,T)$ be a transitive system and $x\in \trans(X,T)$.
If there exists a constant $\delta>0$ with the property that
for every neighborhood $U$ of $x$ there exist $x_1,\dotsc,x_n\in U$ such that
\[
 \Bigl \{ k\in\bbn\colon \min_{1\le i<j\le n}d(T^k x_i,T^k x_j)>\delta\Bigr \}
\]
is a thick set, then $(X,T)$ is $n$-tuplewise thickly sensitive.
\end{lem}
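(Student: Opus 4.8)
The plan is to transport the local thick-sensitivity witnessed at the single transitive point $x$ to an arbitrary opene set by pushing it forward along a suitable iterate of $T$, exploiting that the forward orbit of a transitive point is dense. The constant will be left unchanged: I claim that the same $\delta$ furnished by the hypothesis is an $n$-tuplewise thickly sensitive constant for $(X,T)$.

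First I would fix an arbitrary opene subset $W$ of $X$. Since $x\in\trans(X,T)$ we have $\omega(x,T)=X$, so the forward orbit of $x$ meets $W$; choose $m\in\bbn$ with $T^m x\in W$. By continuity of $T^m$ there is a neighbourhood $U$ of $x$ with $T^m(U)\subset W$. Applying the hypothesis to this $U$ yields points $x_1,\dotsc,x_n\in U$ such that
\[
F:=\Bigl\{k\in\bbn\colon \min_{1\le i<j\le n}d(T^k x_i,T^k x_j)>\delta\Bigr\}
\]
is thick. Put $x_i':=T^m x_i$, so that $x_1',\dotsc,x_n'\in W$, and observe that $T^k x_i'=T^{k+m}x_i$. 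Hence the witnessing set for the tuple $(x_1',\dotsc,x_n')$ is exactly
\[
\Bigl\{k\in\bbn\colon \min_{1\le i<j\le n}d(T^k x_i',T^k x_j')>\delta\Bigr\}=(F-m)\cap\bbn.
\]

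It then remains to check that $(F-m)\cap\bbn$ is thick whenever $F$ is; this is the only real point of the argument and is where I would take care. Given $L\in\bbn$, thickness of $F$ provides a run $\{q,q+1,\dotsc,q+m+L\}\subset F$ of length $m+L+1$; its top $L+1$ terms $\{q+m,\dotsc,q+m+L\}$ lie in $F$, so $\{q,\dotsc,q+L\}\subset F-m$, and since $q\ge 1$ this is a length-$(L+1)$ run inside $\bbn$. As $L$ was arbitrary, $(F-m)\cap\bbn$ is thick. Thus for every opene $W$ we have produced $x_1',\dotsc,x_n'\in W$ whose witnessing set is thick, so $\delta$ is an $n$-tuplewise thickly sensitive constant and $(X,T)$ is $n$-tuplewise thickly sensitive. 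The only subtlety I anticipate is ensuring that the shifted run stays in the positive integers, which is precisely why one extracts a run of the slightly longer length $m+L+1$ rather than $L+1$.
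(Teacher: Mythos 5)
Your proof is correct and follows essentially the same route as the paper: locate an iterate $T^m x$ in the target open set, pull back a neighbourhood $U$ of $x$ with $T^m(U)\subset W$, push the witnessing tuple forward by $T^m$, and use that a (backward) translation of a thick set remains thick. The only difference is that you verify this last translation fact explicitly (including the check that the shifted run stays in $\bbn$), which the paper simply asserts.
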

\begin{proof}
For any opene subset $V$ of $X$, there exists $m\in \bbn$ such that $T^m x\in V$.
Pick a neighborhood $U$ of $x$ such that $T^m U\subset V$.
There exist $x_1,\dotsc,x_n\in U$ such that
\[
 \Bigl\{ k\in\bbn\colon \min_{1\le i<j\le n}d(T^k x_i,T^k x_j)>\delta\Bigr\}
\]
is  thick.
Then $T^m x_i\in V$ for $i=1,\dotsc,n$ and
\[
 \Bigl\{ k\in\bbn\colon \min_{1\le i<j\le n}d(T^k(T^mx_i),T^k(T^mx_j))>\delta\Bigr\}
\]
is also thick, since a translation of a thick set is also thick.
This implies that $(X,T)$ is $n$-tuplewise thickly sensitive.
\end{proof}

Now we give some sufficient and necessary conditions for $n$-tuplewise thick sensitivity.

\begin{prop}\label{prop:n-thick-proximal-1}
Let $(X,T)$ be a dynamical system and $n\geq 2$. Then the following statements are equivalent:
\begin{enumerate}
    \item $(X,T)$ is $n$-tuplewise thickly sensitive;
	\item there is a constant $\delta>0$ with the property that for each opene subset $U$ of $X$,
	there exist $x_1,\dotsc,x_n \in U$	and $y_1,\dotsc,y_n\in X$
	such that $(x_1,\dotsc,x_n)$ is proximal to $(y_1,\dotsc,y_n)$ in the $n$-fold product system $(X^n,T^{(n)})$
	and
	\[
	\liminf_{m\to\infty} \min_{1\le i < j\le n}d(T^m y_i,T^my_j)\ge\delta;
	\]
	\item there is a constant $\delta>0$ with the property that for each opene subset $U$ of $X$,
	there exist $x_1,\dotsc,x_n\in U$ and $y_1,\dotsc,y_n\in X$
	such that $(x_1,\dotsc,x_n)$ is proximal to $(y_1,\dotsc,y_n)$ which is a minimal point in the $n$-fold product system   $(X^n,T^{(n)})$ and
	\[
	\liminf_{m\to\infty} \min_{1\le i < j\le n}d(T^m y_i,T^my_j)\ge\delta.
	\]
	\end{enumerate}
\end{prop}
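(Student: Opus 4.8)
The plan is to establish the cycle of implications $(1)\Rightarrow(3)\Rightarrow(2)\Rightarrow(1)$. Throughout I will write $\bar{x}=(x_1,\dotsc,x_n)$, denote by $(T^{(n)})^k$ the $k$-th iterate of the product map, fix a metric $d^{(n)}$ on $X^n$ compatible with the product topology (say $d^{(n)}(\bar{u},\bar{v})=\max_{1\le i\le n}d(u_i,v_i)$), and set
\[
\rho(z_1,\dotsc,z_n)=\min_{1\le i<j\le n}d(z_i,z_j),
\]
a continuous function on $X^n$. In this notation the defining set in each statement is $\{k\in\bbn\colon \rho((T^{(n)})^k\bar{x})>\delta\}$, and the displayed liminf condition reads $\liminf_{m\to\infty}\rho((T^{(n)})^m\bar{y})\ge\delta$. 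The implication $(3)\Rightarrow(2)$ is immediate, since a minimal point proximal to $\bar{x}$ carrying the liminf bound is in particular a point as required by $(2)$; so the real work is in the other two implications.

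For $(1)\Rightarrow(3)$ I would fix an $n$-tuplewise thickly sensitive constant $\delta$ and an opene $U$, and take $x_1,\dotsc,x_n\in U$ with $F=\{k\colon\rho((T^{(n)})^k\bar{x})>\delta\}$ thick. Since $F$ contains intervals $\{a_L,a_L+1,\dotsc,a_L+L\}$ for every $L$, after passing to a subsequence I may assume $(T^{(n)})^{a_L}\bar{x}\to\bar{w}$ for some $\bar{w}\in X^n$. For each fixed $j\ge 0$ and all large $L$ one has $a_L+j\in F$, so $\rho((T^{(n)})^{a_L+j}\bar{x})>\delta$; letting $L\to\infty$ gives $\rho((T^{(n)})^j\bar{w})\ge\delta$, and by continuity of $\rho$ then $\rho\ge\delta$ on all of $\overline{\orb(\bar{w},T^{(n)})}$, a subsystem of $\overline{\orb(\bar{x},T^{(n)})}$. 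Choosing a minimal subset $M$ of $\overline{\orb(\bar{w},T^{(n)})}$, which is also a minimal subset of $\overline{\orb(\bar{x},T^{(n)})}$, Lemma~\ref{lem:orbit-minimal-proximal} yields $\bar{y}=(y_1,\dotsc,y_n)\in M$ with $(\bar{x},\bar{y})$ proximal. As $\bar{y}$ is a minimal point with $\overline{\orb(\bar{y},T^{(n)})}=M$ and $\rho\ge\delta$ on $M$, the liminf bound holds, giving $(3)$ with the same $\delta$.

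For $(2)\Rightarrow(1)$ the point is to upgrade proximality together with the liminf bound into arbitrarily long intervals. Given $x_1,\dotsc,x_n\in U$ and $\bar{y}$ as in $(2)$, proximality provides $m_k\to\infty$ with $d^{(n)}((T^{(n)})^{m_k}\bar{x},(T^{(n)})^{m_k}\bar{y})\to 0$; passing to a subsequence, $(T^{(n)})^{m_k}\bar{x}\to\bar{p}$ and hence $(T^{(n)})^{m_k}\bar{y}\to\bar{p}$ for a common limit $\bar{p}$. For each fixed $j\ge 0$ one has $(T^{(n)})^{m_k+j}\bar{y}\to(T^{(n)})^j\bar{p}$, and since $m_k+j$ is eventually large the liminf hypothesis forces $\rho((T^{(n)})^j\bar{p})\ge\delta$ for every $j\ge 0$. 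Now fixing $L$, continuity of the finitely many maps $\bar{w}\mapsto\rho((T^{(n)})^j\bar{w})$, $0\le j\le L$, furnishes $\eta>0$ such that $d^{(n)}(\bar{w},\bar{p})<\eta$ implies $\rho((T^{(n)})^j\bar{w})>\delta/2$ for all $j=0,\dotsc,L$. Taking $k$ large so that $d^{(n)}((T^{(n)})^{m_k}\bar{x},\bar{p})<\eta$ and putting $\bar{w}=(T^{(n)})^{m_k}\bar{x}$ shows $\{m_k,\dotsc,m_k+L\}\subset\{j\colon\rho((T^{(n)})^j\bar{x})>\delta/2\}$. As $L$ was arbitrary, this last set is thick, so $\delta/2$ is an $n$-tuplewise thickly sensitive constant and $(1)$ holds.

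I expect the main obstacle to be the implication $(2)\Rightarrow(1)$: proximality by itself only controls $\bar{x}$ along a sparse sequence of times, and the crux is to observe that the common limit $\bar{p}$ inherits the separation bound $\rho\ge\delta$ along its \emph{entire} forward orbit from the liminf hypothesis on $\bar{y}$, and then to transport this bound, via uniform continuity over a finite time window, back to long blocks of consecutive times for $\bar{x}$. By comparison, $(1)\Rightarrow(3)$ is more routine, relying on compactness to extract $\bar{w}$ and on Lemma~\ref{lem:orbit-minimal-proximal} to produce a proximal minimal point inside a subsystem on which $\rho\ge\delta$.
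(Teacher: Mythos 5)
Your proposal is correct and follows essentially the same route as the paper: the cycle $(1)\Rightarrow(3)\Rightarrow(2)\Rightarrow(1)$, with $(1)\Rightarrow(3)$ extracting a limit point along the long blocks of the thick set and invoking Lemma~\ref{lem:orbit-minimal-proximal}, and $(2)\Rightarrow(1)$ transporting the separation bound back to long blocks for $(x_1,\dotsc,x_n)$ via continuity over a finite time window. The only (cosmetic) difference is in $(2)\Rightarrow(1)$, where you work with a single limit point $\bar p$ of the orbit at proximal times instead of a neighborhood $W$ of the whole $\omega$-limit set $M=\omega((y_1,\dotsc,y_n),T^{(n)})$ as the paper does; both implementations are sound.
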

\begin{proof}
	$(2)\Rightarrow (1)$
	Let $\delta$ be the constant with the desired property.
	Fix an opene subset $U$ of $X$.
	There exist $x_1,\dotsc,x_n \in U$	and $y_1,\dotsc,y_n\in X$
	such that $(x_1,\dotsc,x_n)$ is proximal to $(y_1,\dotsc,y_n)$
	and
	\[
	\liminf_{m\to\infty} \min_{1\le i < j\le n}d(T^m y_i,T^my_j)\ge\delta;
	\]
	Let $M=\omega((y_1,y_2,\dotsc,y_n),T^{(n)})$.
	Then for any $(z_1,\dotsc,z_n)\in M$,
	$\min_{1\leq i<j\leq n}d(z_i,z_j)\geq \delta$.
	Choose a neighbourhood $W$ of $M$ such that
	for any $(z_1^{\prime},\dotsc,z_n^{\prime})\in W$,
	$\min_{1\leq i<j\leq n}d(z_i^{\prime},z_j^{\prime})>\frac{\delta}{2}$.
	For any $L>0$, since $M$ is $T^{(n)}$-invariant, $\bigcap_{i=0}^L (T^{(n)})^{-i}W$ is also a neighbourhood of $M$.
	Since $(x_1,\dotsc,x_n)$ and $(y_1,\dotsc,y_n)$ are proximal,
	there exists an $m\in \bbn$ such that
	$(T^{(n)})^m(x_1,\dotsc,x_n)\in\bigcap_{i=0}^L (T^{(n)})^{-i}W$, that is $(T^{(n)})^{m+i}(x_1,\dotsc,x_n)\in W$, $i=0,1,\dotsc, L$. According to the construction of $W$,
	\[
	\{m,m+1,\dotsc,m+L\}\subset
	\Bigl\{k\in\bbn\colon \min_{1\le i<j\le n}d(T^kx_i,T^kx_j)>\tfrac{\delta}{2}\Bigr\}.
	\]
	Then $\frac{\delta}{2}$ is an $n$-tuplewise thickly sensitive constant for $(X,T)$.
	
$(3)\Rightarrow (2)$ is obvious.
		
$(1)\Rightarrow (3)$
	Assume that $\delta$ is an $n$-tuplewise thickly sensitive constant for $(X,T)$. Fix an opene subset $U$ of $X$.
	There exist $x_1,\dotsc, x_n\in U$ such that
\[
  F:=\Bigl\{ k\in\bbn\colon \min_{1\le i<j\le n}d(T^k x_i,T^k x_j) >\delta\Bigr \}
\]
    is a thick set.
Then there exists  a sequence $\{q_m\}_{m=1}^{\infty}$ in $\bbn$
	such that
	\[
	\bigcup_{m=1}^{\infty}(q_m+\{0,1,\dotsc,m\})\subset F.
	\]
Without loss of generality, assume that
$T^{q_m}x_i\to z_i\in X$ as $m\to\infty$ for $i=1,\dotsc,n$.
Then $\orb((z_1,\dotsc,z_n),T^{(n)})\subset \overline{\orb((x_1,\dotsc,x_n),T^{(n)})}$.
By Lemma \ref{lem:orbit-minimal-proximal}, there exists  a minimal point $(y_1,y_2,\dotsc,y_n) \in \overline{\orb((z_1,\dotsc,z_n),T^{(n)})}$ such that $(x_1,x_2,\dotsc, x_n)$ and $(y_1,y_2,\dotsc,y_n)$ are proximal.
For any $p\in\bbn$, $q_m+p \in F$ for all $m\geq p$.
So we have
\[
\min_{1\leq i<j\leq n} d(T^p z_i,T^p z_j)
= \lim_{m\to\infty}\min_{1\leq i<j\leq n} d(T^{q_m+p} x_i,T^{q_m+p} x_j)\geq \delta
\]
and then
\[
\inf_{p\in\bbn}\min_{1\leq i<j\leq n} d(T^p y_i,T^p y_j)\geq \min_{1\leq i<j\leq n} d(T^p z_i,T^p z_j)\geq \delta.
\]
Furthermore, one has
\[
\liminf_{p\to\infty} \min_{1\le i < j\le n}d(T^p y_i,T^p y_j)
\geq \inf_{p\in\bbn}\min_{1\leq i<j\leq n} d(T^p y_i,T^p y_j)\geq \delta.
\]
This ends the proof.
\end{proof}

Now we show a necessary and sufficient condition for a non-trivial weakly mixing system to be $n$-tuplewise thickly sensitive by using Proposition~\ref{prop:n-thick-proximal-1}.

\begin{thm}\label{thm:w-mixing-thick-AP}
Assume that $(X,T)$ is a non-trivial weakly mixing system and $n\geq 2$.
Then $(X,T)$ is $n$-tuplewise thickly sensitive if and only if
it has at least $n$ minimal points.
\end{thm}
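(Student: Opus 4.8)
The plan is to route both implications through the characterization in Proposition~\ref{prop:n-thick-proximal-1}, which reduces $n$-tuplewise thick sensitivity to the existence of a proximal configuration whose target is a minimal, uniformly separated $n$-tuple. For the necessity ($\Rightarrow$), suppose $(X,T)$ is $n$-tuplewise thickly sensitive. Applying condition~(3) of Proposition~\ref{prop:n-thick-proximal-1} to any opene $U$ produces a minimal point $(y_1,\dotsc,y_n)$ of $(X^n,T^{(n)})$ with $\liminf_{m\to\infty}\min_{1\le i<j\le n}d(T^m y_i,T^m y_j)\ge\delta>0$. The lower bound forces $y_i\neq y_j$ for $i\neq j$, and each $y_i$ is a minimal point of $X$ (a coordinate projection of a minimal point is minimal), so $X$ has at least $n$ distinct minimal points. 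This direction uses no mixing.

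For the sufficiency ($\Leftarrow$) I would split the work in two. The first (and decisive) step is to produce a single minimal point $\bar y=(y_1,\dotsc,y_n)$ of $(X^n,T^{(n)})$ with pairwise distinct coordinates. Granting this, set $M=\overline{\orb(\bar y,T^{(n)})}$ and $\delta=\min_{(z_1,\dotsc,z_n)\in M}\min_{1\le i<j\le n}d(z_i,z_j)$. By compactness the minimum is attained, and it is positive: if it were $0$, some point of $M$ would have two equal coordinates, so the projection of the minimal set $M$ to those two coordinates would be a minimal set meeting the diagonal, hence contained in it, forcing $y_i=y_j$. Thus $\liminf_{m\to\infty}\min_{1\le i<j\le n}d(T^m y_i,T^m y_j)\ge\delta>0$, which is the separation required in condition~(2).

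The second step supplies, for an arbitrary opene $U$, points $x_1,\dotsc,x_n\in U$ with $(x_1,\dotsc,x_n)$ proximal to $\bar y$; condition~(2) of Proposition~\ref{prop:n-thick-proximal-1} then gives $n$-tuplewise thick sensitivity. Here I use that $X^n$ is weakly mixing: by Lemma~\ref{lem:weakly-mixing-n} the product $X^{2n}=(X^n)^2$ is transitive, so $N(O,O')$ is thick for all opene $O,O'\subset X^n$. Build nested opene sets $U^n=O_0\supset\overline{O_1}\supset O_1\supset\dotsb$ and times $m_1<m_2<\dotsb$ by the following recursion. At stage $k$, since $\bar y$ is a minimal point, $N(\bar y,B(\bar y,\tfrac1{2k}))$ is syndetic, while $N(O_{k-1},B(\bar y,\tfrac1{2k}))$ is thick; as a thick set meets every syndetic set, some $m_k$ lies in both. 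This yields $v\in O_{k-1}$ with $T^{(n)m_k}v$ and $T^{(n)m_k}\bar y$ both in $B(\bar y,\tfrac1{2k})$, so within $\tfrac1k$ of each other; by continuity shrink to an opene $O_k\ni v$ with $\overline{O_k}\subset O_{k-1}$ on which $d(T^{(n)m_k}\,\cdot\,,T^{(n)m_k}\bar y)<\tfrac1k$. Any $\bar x\in\bigcap_k\overline{O_k}$ then lies in $U^n$ and satisfies $d(T^{(n)m_k}\bar x,T^{(n)m_k}\bar y)\to0$, so $(\bar x,\bar y)$ is proximal.

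The main obstacle is the first step: extracting from ``at least $n$ minimal points'' a minimal point of $X^n$ off every diagonal. My reduction is that it suffices to find $n$ \emph{pairwise distal} minimal points $y_1,\dotsc,y_n$, because for a distal pair the orbit closure in $X^2$ misses the diagonal, so by Lemma~\ref{lem:orbit-minimal-proximal} any minimal point of $\overline{\orb((y_1,\dotsc,y_n),T^{(n)})}$ has distinct coordinates. Minimal points in distinct minimal sets are automatically distal, and a \emph{nontrivial} minimal system always contains a distal pair (a minimal proximal system is a single point). The delicate case is when several of the given minimal points share one minimal set, where one must still produce enough pairwise distal points; this is precisely where weak mixing is essential. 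Weak mixing forces the maximal distal factor of $X$ to be trivial (a distal factor of a weakly mixing system is itself weakly mixing, hence trivial) and so rules out the ``rigid'' factor structure—such as a proximal extension of a finite cyclic system—that caps the number of pairwise distal points and produces the systems which are $n$- but not $(n+1)$-tuplewise thickly sensitive of Remark~\ref{rem:thick-exam}. I expect the cleanest completion to be an Ellis semigroup argument: for a minimal idempotent $u$ in a minimal left ideal, $(up_1,\dotsc,up_n)$ is a minimal point of $X^n$, and the problem reduces to choosing $u$ separating every pair $p_i,p_j$, with weak mixing guaranteeing that the minimal ideal is rich enough to do so.
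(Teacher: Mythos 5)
Your necessity argument coincides with the paper's, and your step 2 is correct: it is a hands-on substitute for the paper's shorter move, which is to pick a transitive point of $(X^n,T^{(n)})$ inside $U^n$ (possible since, by Lemma~\ref{lem:weakly-mixing-n}, $(X^n,T^{(n)})$ is transitive, so its transitive points are dense) and then apply Lemma~\ref{lem:orbit-minimal-proximal} to obtain a point of the minimal set $\overline{\orb((w_1,\dotsc,w_n),T^{(n)})}$ proximal to it. The genuine gap is the step you yourself call decisive and leave unfinished: producing a minimal point of $(X^n,T^{(n)})$ with pairwise distinct coordinates when several of the given minimal points share one minimal set. Your reduction to $n$ pairwise distal points is legitimate (indeed any off-diagonal minimal point of $(X^2,T^{(2)})$ is automatically a distal pair), but your proposed Ellis completion --- find one minimal idempotent $u$ with $up_i\neq up_j$ for all $i\neq j$ --- is not only unproved but potentially impossible for the given points: since $u(p_i,p_j)=(up_i,up_j)$ is a minimal point of $\overline{\orb((p_i,p_j),T^{(2)})}$, such a $u$ can exist only if that orbit closure contains an off-diagonal minimal point, and nothing in your hypotheses rules out $p_i,p_j$ being syndetically proximal (every minimal point of the orbit closure of the pair lying on the diagonal), in which case one must change the points, not the idempotent.

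The missing tool --- and what the paper invokes --- is the classical fact that a product of minimal systems has a dense set of minimal points. (Sketch: the set of minimal points of a product is invariant under each partial shift $T\times\mathrm{id}\times\dotsb\times\mathrm{id}$, etc., because syndeticity of return times survives such shifts; applying the partial shifts to any one minimal point of the product and using minimality of each factor gives density.) With this, set $Z=\overline{\orb(z_1,T)}\times\dotsb\times\overline{\orb(z_n,T)}$: the tuples of $Z$ with pairwise distinct coordinates form an open subset which is non-empty because it contains $(z_1,\dotsc,z_n)$ itself, so $Z$ contains a minimal point $(w_1,\dotsc,w_n)$ of $(X^n,T^{(n)})$ with pairwise distinct coordinates, no matter how the $z_i$ distribute among minimal sets; your $\delta$-argument then applies to this tuple verbatim. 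Note this also corrects your diagnosis of where weak mixing matters: it is irrelevant to step 1 (the above works in any system with $n$ distinct minimal points, and weak mixing of $X$ says nothing about the internal structure of a single minimal subset, which can for instance be a periodic orbit); weak mixing is needed only in step 2, where transitivity of $(X^n,T^{(n)})$ produces points of $U^n$ proximal to the minimal tuple, and this is exactly how the paper deploys it.
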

\begin{proof}
$(\Rightarrow)$
As $(X,T)$ is $n$-tuplewise thickly sensitive,
by Proposition~\ref{prop:n-thick-proximal-1}
there exists a minimal point
$(y_1,\dotsc,y_n)\in X^{(n)}$ with
\[
\liminf_{m\to\infty} \min_{1\le i < j\le n}d(T^m y_i,T^my_j)>0.
\]
Thus $y_1,\dotsc,y_n$ are pairwise distinct minimal points in $X$.
		
$(\Leftarrow)$
Assume that there exist $n$ pairwise distinct minimal points $z_1, \dotsc, z_n$ in $X$.
Let $Z= \overline{\orb(z_1,T)}\times \dotsb\times \overline{\orb(z_n,T)}$.
Then $Z$ is $T^{(n)}$-invariant and has a dense set of minimal points.
Pick a minimal point $(w_1,\dotsc,w_n)\in Z$ with pairwise distinct coordinates and let
\[
\delta= \inf_{k\in \bbn}\min_{1\le i<j\le n}d(T^kw_i,T^k w_j).
\]
Then $\delta>0$ and for any $(y_1, \dotsc, y_n)\in \overline{\orb((w_1,\dotsc,w_n),T^{(n)})}$,
\[
\inf_{k\in \bbn}\min_{1\le i<j\le n}d(T^ky_i,T^k y_j)
=\inf_{k\in \bbn}\min_{1\le i<j\le n}d(T^kw_i,T^k w_j)=\delta>0.
\]
Fix an opene subset $U$ of $X$.
By Lemma~\ref{lem:weakly-mixing-n}, $(X^n, T^{(n)})$ is  weakly mixing.  Pick a transitive points $(x_1,\dotsc,x_n)\in U^n$.
By Lemma~\ref{lem:orbit-minimal-proximal},
there exists a point $(y_1, \dotsc, y_n)\in \overline{\orb((w_1,\dotsc,w_n),T^{(n)})}$
such that $(x_1,\dotsc,x_n)$ is proximal to $(y_1,\dotsc,y_n)$.
By Proposition~\ref{prop:n-thick-proximal-1}, $(X,T)$ is $n$-tuplewise thickly sensitive.
\end{proof}
	
\begin{cor}
If $(X,T)$ is a non-trivial  minimal weakly mixing system, then $(X,T)$ is $n$-tuplewise thickly sensitive for any $n\geq 2$.
\end{cor}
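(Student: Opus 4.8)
The plan is to derive this corollary directly from Theorem~\ref{thm:w-mixing-thick-AP}, which already characterizes $n$-tuplewise thick sensitivity for non-trivial weakly mixing systems in terms of the number of minimal points. Since $(X,T)$ is assumed to be minimal, every point of $X$ is a minimal point, so the only thing I need to check is that $X$ contains at least $n$ distinct points for each fixed $n\ge 2$; equivalently, that $X$ is infinite. Once this is established, the hypothesis of Theorem~\ref{thm:w-mixing-thick-AP} (``at least $n$ minimal points'') is satisfied for every $n$, and the conclusion follows immediately.

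The key step is therefore to observe that a non-trivial weakly mixing system has an infinite (indeed perfect) phase space. This fact is already recorded in the preliminaries: the excerpt notes that ``if a weakly mixing system $(X,T)$ is non-trivial then $X$ is perfect.'' A perfect compact metric space has no isolated points and is non-empty, hence is infinite, so it contains at least $n$ distinct points for any $n$. By minimality each of these points is a minimal point. Thus $(X,T)$ has at least $n$ minimal points for every $n\ge 2$.

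Concretely, I would write: fix $n\ge 2$. Since $(X,T)$ is non-trivial and weakly mixing, $X$ is perfect and in particular infinite, so we may choose $n$ pairwise distinct points $z_1,\dotsc,z_n\in X$. As $(X,T)$ is minimal, each $z_i$ is a minimal point, so $X$ has at least $n$ minimal points. By Theorem~\ref{thm:w-mixing-thick-AP}, $(X,T)$ is $n$-tuplewise thickly sensitive. Since $n\ge 2$ was arbitrary, $(X,T)$ is $n$-tuplewise thickly sensitive for all $n\ge 2$.

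There is essentially no obstacle here: this is a one-line consequence of the preceding theorem together with the standard fact that non-trivial weak mixing forces a perfect (hence infinite) space. The only point requiring a moment's care is the logical direction --- one uses the ``$(\Leftarrow)$'' implication of Theorem~\ref{thm:w-mixing-thick-AP}, and the verification that minimality upgrades ``distinct points'' to ``distinct minimal points'' is immediate. No new estimates, limits, or dynamical constructions are needed.
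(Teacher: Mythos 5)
Your proof is correct and is exactly the argument the paper intends: the corollary is stated as an immediate consequence of Theorem~\ref{thm:w-mixing-thick-AP}, using that a non-trivial weakly mixing system has perfect (hence infinite) phase space and that minimality makes every point a minimal point. No issues.
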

	
\begin{rem}\label{rem:thick-exam}
For every $n\geq 2$, by \cite[Proposition 6.3]{HKKPZ18} there exists a non-trivial strongly mixing system with exact $n$ minimal points.
Then by Theorem~\ref{thm:w-mixing-thick-AP}, this strongly mixing system is $n$-tuplewise thickly sensitive, but not
$(n+1)$-tuplewise thickly sensitive.
\end{rem}

\begin{rem}\label{rem:thick-ip-exam}
There exists a non-trivial strongly mixing system $(X,T)$ with a unique minimal point (see e.g. \cite[Main Theorem]{HZ} or \cite[Proposition 6.3]{HKKPZ18}).
Then by Theorem~\ref{thm:w-mixing-thick-AP},
$(X,T)$ is not pairwise thickly sensitive.
However, by Theorem~\ref{thm:weakly-mixing-IP-sen} $(X,T)$ is $n$-tuplewise IP-sensitive for all $n\geq 2$.
\end{rem}

\subsection{\texorpdfstring{$N$}{N}-tuplewise thick sensitivity among factor maps}
Similar to the Lemma~\ref{lem:IP-sen-semi-open},
$n$-tuplewise thick sensitivity can be also lifted by semi-open factor map. We leave the routine proof to the reader.

\begin{lem}\label{lem:semi-open-thick}
Let $\pi\colon (X,T)\to (Y,S)$ be a factor map and $n\geq 2$.
If $(Y,S)$ is $n$-tuplewise thickly sensitive and $\pi$ is semi-open, then $(X,T)$ is also $n$-tuplewise thickly sensitive.
\end{lem}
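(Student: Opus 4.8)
The plan is to mirror the proof of Lemma~\ref{lem:IP-sen-semi-open} exactly, replacing every occurrence of ``IP-set'' by ``thick set''; the only property of the collection of thick sets we need is that it is closed under taking supersets, which is immediate from the definition. First I would fix an $n$-tuplewise thickly sensitive constant $\delta$ for $(Y,S)$, and then invoke uniform continuity of $\pi$ to produce $\eta>0$ such that $d(u,v)\le\eta$ implies $d(\pi(u),\pi(v))\le\delta$ for all $u,v\in X$.

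Next, given an arbitrary opene subset $U$ of $X$, I would use the hypothesis that $\pi$ is semi-open to conclude that $\operatorname{int}(\pi(U))$ is non-empty. Applying $n$-tuplewise thick sensitivity of $(Y,S)$ to this opene subset of $Y$ yields points $y_1,\dotsc,y_n\in\operatorname{int}(\pi(U))$ for which
\[
F:=\Bigl\{k\in\bbn\colon \min_{1\le i<j\le n}d(S^k y_i,S^k y_j)>\delta\Bigr\}
\]
is thick. Since each $y_i$ lies in $\pi(U)$, I may lift it to a point $x_i\in U$ with $\pi(x_i)=y_i$ for $i=1,\dotsc,n$.

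Finally, the choice of $\eta$ gives the key containment: for any $k$ and any pair $i<j$, if $d(T^k x_i,T^k x_j)\le\eta$ then $d(S^k y_i,S^k y_j)=d(\pi(T^k x_i),\pi(T^k x_j))\le\delta$, so taking contrapositives yields
\[
F\subset\Bigl\{k\in\bbn\colon \min_{1\le i<j\le n}d(T^k x_i,T^k x_j)>\eta\Bigr\}.
\]
Because $F$ is thick and thickness is preserved under passing to supersets, the right-hand set is thick as well, which shows that $\eta$ is an $n$-tuplewise thickly sensitive constant for $(X,T)$. There is no real obstacle here: the argument is formally identical to that of Lemma~\ref{lem:IP-sen-semi-open}, and the authors indeed remark that the proof is routine and leave it to the reader. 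The only conceptual point worth noting is that, just as the collection of IP-sets is superset-closed, so is the collection of thick sets, so the final inclusion step transfers verbatim.
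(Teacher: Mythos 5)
Your proof is correct and is precisely the argument the paper intends: the paper omits the proof of Lemma~\ref{lem:semi-open-thick}, noting only that it is "similar to Lemma~\ref{lem:IP-sen-semi-open}," and your adaptation (uniform continuity to get $\eta$, semi-openness to get $\operatorname{int}(\pi(U))\neq\emptyset$, lifting the $y_i$ to $x_i\in U$, and the contrapositive inclusion $F\subset\{k\in\bbn\colon \min_{1\le i<j\le n}d(T^k x_i,T^k x_j)>\eta\}$) is exactly that routine transfer. You also correctly identify the only property of thick sets needed, namely closure under supersets.
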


The following result shows that $n$-tuplewise thick sensitivity is preserved by proximal factor maps, which generalizes the result of \cite[Proposition 5.1]{YY18}.

\begin{prop}\label{prop:pi-proximal-n-thick}
Let $\pi\colon (X,T)\to (Y,S)$ be a factor map and $n\geq 2$.
If $(X,T)$ is $n$-tuplewise thickly sensitive and $\pi$ is proximal,
then $(Y,S)$ is also $n$-tuplewise thickly sensitive.	
\end{prop}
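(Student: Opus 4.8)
The plan is to deduce the $n$-tuplewise thick sensitivity of $(Y,S)$ by verifying condition~(2) of Proposition~\ref{prop:n-thick-proximal-1}, transporting the witnesses for $(X,T)$ forward through $\pi$. Let $\delta>0$ be a constant as in condition~(3) of Proposition~\ref{prop:n-thick-proximal-1} for $(X,T)$. Given an opene subset $V$ of $Y$, the set $U=\pi^{-1}(V)$ is opene in $X$ (as $\pi$ is continuous and surjective), so there are $x_1,\dots,x_n\in U$ and a minimal point $(y_1,\dots,y_n)$ of $(X^n,T^{(n)})$ with $(x_1,\dots,x_n)$ proximal to $(y_1,\dots,y_n)$ and $\liminf_{m\to\infty}\min_{1\le i<j\le n}d(T^m y_i,T^m y_j)\ge\delta$. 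Set $v_i=\pi(x_i)\in V$ and $w_i=\pi(y_i)$. Proximality passes to factors (by uniform continuity of $\pi$), so $(v_1,\dots,v_n)$ is proximal to $(w_1,\dots,w_n)$ in $(Y^n,S^{(n)})$; thus it remains only to produce a uniform $\eta>0$, independent of $V$, with $\liminf_{m\to\infty}\min_{1\le i<j\le n}d(S^m w_i,S^m w_j)\ge\eta$.

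The main obstacle is precisely this uniform lower bound, since a factor map may collapse distances and $\eta$ must not depend on $V$. I would handle it with a single global compactness argument. Consider the set
\[
C_\delta=\Bigl\{z\in X^n\colon \min_{1\le i<j\le n}d(T^k z_i,T^k z_j)\ge\delta \text{ for all } k\ge 0\Bigr\},
\]
which is closed, hence compact, and $T^{(n)}$-invariant. Since $(y_1,\dots,y_n)$ is a minimal point with the above liminf bound, its whole orbit lies in the minimal set on which the separation is $\ge\delta$, so $(y_1,\dots,y_n)\in C_\delta$. The key claim is that
\[
\eta:=\inf_{z\in C_\delta}\ \inf_{k\ge 0}\ \min_{1\le i<j\le n}d\bigl(S^k\pi(z_i),S^k\pi(z_j)\bigr)>0,
\]
and that this $\eta$ depends only on $\delta$ and $\pi$. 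Granting the claim, $(y_1,\dots,y_n)\in C_\delta$ gives $\inf_{k\ge0}\min_{1\le i<j\le n}d(S^k w_i,S^k w_j)\ge\eta$, whence the required liminf bound, and Proposition~\ref{prop:n-thick-proximal-1} (via (2)$\Rightarrow$(1)) yields that $(Y,S)$ is $n$-tuplewise thickly sensitive.

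To prove the claim I would argue by contradiction, and this is where the hypothesis that $\pi$ is proximal enters. If $\eta=0$, choose $z^{(m)}\in C_\delta$ and $k_m\ge 0$ with $\min_{1\le i<j\le n}d(S^{k_m}\pi(z^{(m)}_i),S^{k_m}\pi(z^{(m)}_j))\to 0$, and set $u^{(m)}=T^{(n)k_m}z^{(m)}\in C_\delta$. By compactness of $C_\delta$ pass to a subsequence with $u^{(m)}\to u^*\in C_\delta$; continuity of $\pi$ and of the metric then force $\min_{1\le i<j\le n}d(\pi(u^*_i),\pi(u^*_j))=0$, so $\pi(u^*_i)=\pi(u^*_j)$ for some $i<j$, i.e.\ $(u^*_i,u^*_j)\in R_\pi$. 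Since $\pi$ is proximal, $R_\pi\subset P(X,T)$, so $(u^*_i,u^*_j)$ is a proximal pair; but $u^*\in C_\delta$ gives $d(T^k u^*_i,T^k u^*_j)\ge\delta$ for all $k\ge 0$, contradicting proximality. This contradiction establishes $\eta>0$ and completes the argument. The only routine points I am suppressing are that $\pi^{-1}(V)$ is opene, that proximality is preserved by factor maps, and that a minimal point satisfying the liminf bound indeed lies in $C_\delta$.
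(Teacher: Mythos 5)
Your proof is correct, but it takes a genuinely different route from the paper's. You transport witnesses \emph{forward} through $\pi$ using the characterization in Proposition~\ref{prop:n-thick-proximal-1}: condition (3) for $(X,T)$ supplies, over each $\pi^{-1}(V)$, a minimal point of $(X^n,T^{(n)})$ lying in the set $C_\delta$ of forever-$\delta$-separated tuples, and your crux is the compactness claim that the images under $\pi$ of points of $C_\delta$ admit a \emph{uniform} separation constant $\eta>0$, with proximality of $\pi$ used exactly to rule out $\eta=0$ (a pair in $R_\pi$ that stays $\delta$-separated for all times cannot be proximal); condition (2) for $(Y,S)$ then finishes. (One simplification you could make: since $C_\delta$ is $T^{(n)}$-invariant and $S^k\circ\pi=\pi\circ T^k$, the double infimum defining $\eta$ collapses to $\inf_{z\in C_\delta}\min_{i<j}d(\pi(z_i),\pi(z_j))$, which is attained by compactness.) The paper instead argues directly from the definitions by contradiction: negating thick sensitivity of $(Y,S)$ gives, for each $m$, an opene $U_m\subset Y$ in which every $n$-tuple has a \emph{syndetic} set of times of $\tfrac1m$-closeness; intersecting these syndetic sets with the thick sets coming from the sensitivity of $(X,T)$ on $\pi^{-1}(U_m)$ produces times $k_m$ at which some pair stays $\delta$-separated in $X$ for a block of length $m$ while its $\pi$-images are $\tfrac1m$-close, and a diagonal limit yields a pair in $R_\pi$ that is $\delta$-separated for all times --- the same contradiction with proximality that you reach. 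What each approach buys: the paper's proof is self-contained, resting only on the thick/syndetic duality and not on Proposition~\ref{prop:n-thick-proximal-1}; yours is more structural and quantitative, producing an explicit sensitivity constant $\eta$ for $(Y,S)$ depending only on $\delta$ and $\pi$, at the cost of invoking both implications (1)$\Rightarrow$(3) and (2)$\Rightarrow$(1) of that proposition. The suppressed routine points you flag are indeed routine; in particular $(y_1,\dotsc,y_n)\in C_\delta$ follows from recurrence of minimal points, since $y\in\omega(y,T^{(n)})$ lets the liminf bound propagate to every $k\ge 0$.
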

\begin{proof}
We prove this result by contradiction.
Assume that $(Y,S)$ is not $n$-tuplewise thickly sensitive.
Then for every $m\in\bbn$ there exists an opene subset $U_m$ of $Y$ such that for any  $y_{1,m},\dotsc,y_{n,m}\in U_m$,
\[
\Bigl\{ k\in\bbn\colon \min_{1\leq i<j\leq n}d(S^k y_{i,m},S^k y_{j,m})<\tfrac{1}{m}
\Bigr\}
\]
is syndetic.
Pick an $n$-tuplewise thickly sensitive constant $\delta$ for $(X,T)$.
For any $m\in\bbn$, there exist $x_{1,m},\dotsc,x_{n,m}\in \pi^{-1}(U_m)$ such that
\[
\Bigl\{ k\in\bbn\colon \min_{1\leq i<j\leq n}d(T^k x_{i,m}, T^k x_{j,m})>\delta
\Bigr\}
\]
is thick.
Then for each $m\in\bbn$, there exist $k_m\in\bbn$ and $1\leq i_m<j_m\leq n$ such that
\[
d(T^{k_m+q} x_{i_m,m}, T^{k_m+q}x_{j_m,m})>\delta\]
for $q=0,1,\dotsc,m$ and
\[
d\bigl(S^{k_m}(\pi( x_{i_m,m})),S^{k_m}(\pi( x_{j_m,m}))\bigr)<\tfrac{1}{m}.
\]
Without loss of generality, assume that
$T^{k_m}x_{i_m,m} \to x_{i_0}$ and
$T^{k_m}x_{j_m,m} \to x_{j_0}$ as $m\to\infty$.
Then \[
d(\pi(x_{i_0}),\pi(x_{j_0}))
=\lim_{m\to\infty}d\bigl(S^{k_m}(\pi( x_{i_m,m})),S^{k_m}(\pi( x_{j_m,m}))=0,
\]
that is $\pi(x_{i_0})=\pi(x_{j_0})$.
For any $q\geq 0$, one has
\[
d(T^q x_{i_0},T^q x_{j_0})
=\lim_{m\to\infty}d(T^{k_m+q} x_{i_m,m}, T^{k_m+q}x_{j_m,m})
\geq \delta,
\]
which contradicts to the proximality of $\pi$.
\end{proof}

Since every factor between minimal systems are semi-open, according to  Lemma~\ref{lem:semi-open-thick} and Proposition~\ref{prop:pi-proximal-n-thick} we have the following corollary.

\begin{cor}\label{cor:proximal-factor-thick-sen}
Let $\pi\colon (X,T)\to (Y,S)$ be a factor map between minimal systems. Assume that $\pi$ is proximal and $n\geq 2$.
Then $(X,T)$ is $n$-tuplewise thickly sensitive if and only if so is $(Y,S)$.
\end{cor}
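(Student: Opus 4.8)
The plan is to prove Corollary~\ref{cor:proximal-factor-thick-sen} by combining the two preceding results in the two directions separately, using the key fact (recorded in the Preliminaries) that any factor map between minimal systems is automatically semi-open. So I would begin by noting that both $(X,T)$ and $(Y,S)$ are minimal, hence $\pi$ is semi-open.

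For the forward direction, suppose $(X,T)$ is $n$-tuplewise thickly sensitive. Since $\pi$ is proximal, Proposition~\ref{prop:pi-proximal-n-thick} applies directly and yields that $(Y,S)$ is $n$-tuplewise thickly sensitive. This half needs no extra work beyond invoking the proposition; the proximality hypothesis is exactly what that proposition consumes.

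For the reverse direction, suppose $(Y,S)$ is $n$-tuplewise thickly sensitive. Here I would invoke Lemma~\ref{lem:semi-open-thick}: since $\pi$ is a semi-open factor map and the factor $(Y,S)$ is $n$-tuplewise thickly sensitive, the property lifts to $(X,T)$, so $(X,T)$ is $n$-tuplewise thickly sensitive. Combining the two implications gives the stated equivalence.

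There is really no main obstacle here, since the corollary is a formal consequence of two results already established; the only point requiring a moment's care is the justification that $\pi$ is semi-open, which is supplied by the fact that factor maps between minimal systems are semi-open (Theorem~1.15 of \cite{A88}, as noted in the Preliminaries). I would therefore write the proof as a short two-line argument: semi-openness plus Lemma~\ref{lem:semi-open-thick} gives one implication, proximality plus Proposition~\ref{prop:pi-proximal-n-thick} gives the other.
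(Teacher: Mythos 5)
Your proof is correct and matches the paper's argument exactly: the paper likewise derives the corollary from Proposition~\ref{prop:pi-proximal-n-thick} (proximality, for the forward direction) and Lemma~\ref{lem:semi-open-thick} together with the fact that factor maps between minimal systems are semi-open (for the converse). Nothing is missing.
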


It is shown in \cite[Theorem~D]{YY18} that
a minimal system $(X,T)$ is pairwise thickly sensitive if and only if
$\pi$ is not proximal, where $\pi\colon (X,T)\to (X_{d},T_d)$ be the factor map to its maximal distal factor.
It is easy to see that for a factor map $\pi\colon (X,T)\to (X_d,T_d)$ between minimal systems, it is not  proximal if and only if for any $y\in Y$ there exist distinct $x_1,x_2\in\pi^{-1}(y)$ such that $(x_1,x_2)$ is a minimal point in $(X^2,T^{(2)})$.
Now we generalizes
the necessary condition of the above result to $n$-tuplewise thick sensitivity for systems which are not necessary to be minimal.

\begin{prop}\label{prop:thick-sen-minimal-point}
Let $(X,T)$ be a dynamical system and $\pi:(X,T)\to (X_{d},T_d)$ be the factor map to its maximal distal factor.
Assume that $n\geq 2$.
If $(X,T)$ is $n$-tuplewise thickly sensitive then there exists $z\in X_d$ and pairwise distinct $z_1,\dotsc,z_n\in \pi^{-1}(z)$ such that
$(z_1,\dotsc,z_n)$ is a minimal point in $(X^n,T^{(n)})$.
\end{prop}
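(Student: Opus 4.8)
The plan is to combine the characterization in Proposition~\ref{prop:n-thick-proximal-1} with a diagonal argument over shrinking neighbourhoods, exploiting that the fibre product of $X$ over $X_d$ is closed and $T^{(n)}$-invariant. Since $(X,T)$ is $n$-tuplewise thickly sensitive, I fix an $n$-tuplewise thickly sensitive constant $\delta>0$ and use the equivalent condition~(3) of Proposition~\ref{prop:n-thick-proximal-1}. Because this constant does not depend on the chosen opene set, for each $m\in\bbn$ I apply it to the ball $B(\xi,1/m)$ (for some fixed $\xi\in X$), obtaining points $x_1^{(m)},\dotsc,x_n^{(m)}\in B(\xi,1/m)$ and a minimal point $(y_1^{(m)},\dotsc,y_n^{(m)})$ of $(X^n,T^{(n)})$ proximal to $(x_1^{(m)},\dotsc,x_n^{(m)})$ with $\liminf_{p\to\infty}\min_{i<j}d(T^p y_i^{(m)},T^p y_j^{(m)})\ge\delta$. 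Since $(y_1^{(m)},\dotsc,y_n^{(m)})$ is minimal, its forward orbit from any time is dense in its orbit closure, so this $\liminf$ equals the minimum over that orbit closure of the continuous function $(w_1,\dotsc,w_n)\mapsto\min_{i<j}d(w_i,w_j)$; hence $\min_{i<j}d(T^k y_i^{(m)},T^k y_j^{(m)})\ge\delta$ for every $k\ge 0$.

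Next I extract the two properties that survive a limit. Proximality in $X^n$ forces each coordinate pair $(x_i^{(m)},y_i^{(m)})$ to be proximal in $(X,T)$, so $(x_i^{(m)},y_i^{(m)})\in P(X,T)\subset R_\pi$ and thus $\pi(x_i^{(m)})=\pi(y_i^{(m)})$. As all $x_i^{(m)}$ lie in $B(\xi,1/m)$, uniform continuity of $\pi$ gives $d(\pi(x_i^{(m)}),\pi(x_j^{(m)}))\to 0$, whence $d(\pi(y_i^{(m)}),\pi(y_j^{(m)}))\to 0$ as $m\to\infty$. By compactness of $X^n$, I pass to a subsequence with $(y_1^{(m)},\dotsc,y_n^{(m)})\to(z_1,\dotsc,z_n)$. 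Letting $m\to\infty$ for each fixed $k$ shows $\min_{i<j}d(T^k z_i,T^k z_j)\ge\delta$ for all $k\ge 0$, so the whole orbit of $(z_1,\dotsc,z_n)$ stays $\delta$-separated; and continuity of $\pi$ together with the previous estimate yields $\pi(z_1)=\dotsb=\pi(z_n)=:z$, i.e.\ $(z_1,\dotsc,z_n)$ lies in the fibre product $\{(w_1,\dotsc,w_n)\colon\pi(w_1)=\dotsb=\pi(w_n)\}$.

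The one property that does not pass to the limit is minimality, and recovering it is the crux. To fix this I descend into the orbit closure: the fibre product is a closed $T^{(n)}$-invariant subset of $X^n$ containing $(z_1,\dotsc,z_n)$, so $\overline{\orb((z_1,\dotsc,z_n),T^{(n)})}$ lies in the fibre product, and by continuity every point of this orbit closure still has $\delta$-separated coordinates. Choosing a minimal subset of this orbit closure and any point $(z_1',\dotsc,z_n')$ in it, I obtain a minimal point of $(X^n,T^{(n)})$ whose coordinates are pairwise distinct (being $\delta$-separated) and which satisfies $\pi(z_1')=\dotsb=\pi(z_n')$. Renaming, these are the desired pairwise distinct $z_1,\dotsc,z_n\in\pi^{-1}(z)$ with $(z_1,\dotsc,z_n)$ minimal. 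I expect the main obstacle to be exactly this handling of minimality: since limits of minimal points need not be minimal, the equal-fibre condition must be produced by the limiting argument while minimality is salvaged only afterwards inside the orbit closure, relying on the fact that both $\delta$-separation and the equal-fibre condition are preserved there.
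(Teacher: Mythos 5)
Your proof is correct and follows essentially the same route as the paper's: apply Proposition~\ref{prop:n-thick-proximal-1} on shrinking balls around a fixed point, pass to a limit of the $\delta$-separated tuples (whose coordinates have equal $\pi$-images because proximal pairs collapse in the distal factor $X_d$), and then take a minimal point in the orbit closure of the limit tuple, which remains in the fibre product and remains $\delta$-separated. The only difference is cosmetic: the paper directly quotes an inf-over-all-times separation for the tuple $(y_1,\dotsc,y_n)$ (which the proof of Proposition~\ref{prop:n-thick-proximal-1}, $(1)\Rightarrow(3)$, indeed establishes even though the statement only asserts a $\liminf$), whereas you recover that inf from the stated $\liminf$ via minimality of the tuples $(y_1^{(m)},\dotsc,y_n^{(m)})$ --- a slightly more careful use of the proposition as stated.
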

	\begin{proof}	
	For each $k\in\bbn$, there is $\tau_k>0$ such that
	if $w_1,w_2\in X$ with $d(w_1,w_2)<2\tau_k$ then $d(\pi(w_1),\pi(w_2))<\frac{1}{k}$.
	Pick $x\in X$ and put $U_k=B(x,\tau_k)$.
	By the assumption $(X,T)$ is $n$-tuplewise thickly sensitive, by Proposition \ref{prop:n-thick-proximal-1}
	there are $(x_1^k,\dotsc, x_n^k)\in (U_k)^n$ and $(y_1^k,\dotsc, y_n^k)\in X^n$ with $(x_1^k,\dotsc,x_n^k)$  proximal to $(y_1^k,\dotsc,y_n^k)$ and
	\[
	\inf_{m\in \bbn}\min_{1\le i<j\le n}d(T^my_i^k,T^my_j^k)\ge\delta.
	\]
	Without loss of generality we assume that $y_i^k\to y_i$, $i=1,2,\cdots,n$, when $k\to \infty$.
	It is clear that
	$d(T^my_i,T^my_j)\ge \delta$, $1\leq i\neq j\leq n$,
	for each $m\in \bbn$.
	Now we show that $\pi(y_i)=\pi(y_j)$,
	for $1\leq i\neq j\leq n$.
	Since $x_1^k,\dotsc, x_n^k\in U_k$, $d(\pi(x_i^k),\pi(x_j^k))<\frac{1}{k}$.
	Since $(x_1^k,\cdots,x_n^k)$ is proximal to $(y_1^k,\dotsc,y_n^k)$ and $X_{d}$ is a distal system, $\pi(x_i^k)=\pi(y_i^k)$.
	Thus $d(\pi(y_i^k),\pi(y_j^k))<\frac{1}{k}$, $d(\pi(y_i),\pi(y_j))=0$ and $\pi(y_i)=\pi(y_j)$.
	Choose a minimal point $(z_1,\dotsc,z_n)\in \overline{\orb{((y_1,\dotsc,y_n),T^{(n)})}}\subset R_n^\pi$.
	Then there exists $z\in X_d$ such that $z_1,\dotsc,z_n\in \pi^{-1}(z)$.
\end{proof}

It is interesting to know whether the converse of Proposition~\ref{prop:thick-sen-minimal-point} holds for minimal systems.
To be precise, we have the following question.

\begin{ques}
Let $\pi\colon (X,T)\to (X_{d},T_d)$ be the factor map to the maximal distal factor of a minimal system $(X,T)$ and $n\geq 3$.
Assume that for every $z\in X_d$ there exist pairwise distinct $x_1,\dotsc,x_n\in \pi^{-1}(z)$ such that
$(x_1,\dotsc,x_n)$ is a minimal point in $(X^n,T^{(n)})$.
Is $(X,T)$ $n$-tuplewise thickly sensitive?
\end{ques}

\subsection{Dichotomy for pairwise thick sensitivity and pairwise syndetic equicontinuity}
In this subsection we study the opposite side of pairwise thick sensitivity, named pairwise syndetic equicontinuity.

Let $(X,T)$ be a dynamical system.
A point $x\in X$ is  called \emph{pairwise syndetically equicontinuous} if for any $\eps>0$ there exists a neighborhood $U$ of $x$ such that for any $y,z\in U$,
$\{k\in\bbn\colon d(T^k y,T^k z)<\eps\}$ is a syndetic set.
If each point $x\in X$ is pairwise syndetically equicontinuous, then we say that $(X,T)$ is pairwise syndetically equicontinuous.

As each IP$^*$-set is syndetic, every pairwise IP$^*$-equicontinuous system is also pairwise syndetically equicontinuous.
In particular, by Lemma~\ref{lem:distal-system-IP-star-eq}  every distal system is pairwise syndetically equicontinuous.

Denote by $\eq^{\syn}(X,T)$ the collection of all pairwise syndetically equicontinuous points in $X$.
A dynamical system $(X,T)$ is called \emph{almost pairwise syndetically equicontinuous} if $\eq^{\syn}(X,T)$ is residual in $X$.

In the following we will show the dichotomy result, since
the collections of syndetic sets and thick sets are translation invariant, we use the standard techniques.

\begin{lem}\label{lem:syn-equi-points}
Let $(X,T)$ be a dynamical system.
Then $\eq^{\syn}(X,T)$ is a $G_\delta$ subset of $X$ and $T^{-1}(\eq^{\syn}(X,T))\subset \eq^{\syn}(X,T)$.
\end{lem}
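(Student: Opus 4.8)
The goal is to prove Lemma~\ref{lem:syn-equi-points}, which asserts two things about the set $\eq^{\syn}(X,T)$ of pairwise syndetically equicontinuous points: that it is a $G_\delta$ subset of $X$, and that it is positively invariant in the backward sense, $T^{-1}(\eq^{\syn}(X,T))\subset \eq^{\syn}(X,T)$.

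The plan is to write $\eq^{\syn}(X,T)$ as a countable intersection of open sets. For each $\eps>0$, let me say that a point $x$ is \emph{$\eps$-good} if there exists a neighborhood $U$ of $x$ such that for all $y,z\in U$ the set $\{k\in\bbn\colon d(T^ky,T^kz)<\eps\}$ is syndetic. By definition $x\in\eq^{\syn}(X,T)$ if and only if $x$ is $\eps$-good for every $\eps>0$, and by monotonicity it suffices to require this only for $\eps=1/n$, $n\in\bbn$. So writing $E_n$ for the set of $(1/n)$-good points, we have $\eq^{\syn}(X,T)=\bigcap_{n=1}^\infty E_n$. The first task is to show each $E_n$ is open. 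This is immediate from the definition: if $x\in E_n$ with witnessing neighborhood $U$, then $U$ itself is an open neighborhood of $x$ contained in $E_n$, because any $x'\in U$ can take the same $U$ as its witnessing neighborhood. Hence each $E_n$ is open and $\eq^{\syn}(X,T)$ is $G_\delta$.

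For the invariance, I would fix $x$ with $Tx\in \eq^{\syn}(X,T)$ and show $x\in\eq^{\syn}(X,T)$. Fix $\eps>0$. By continuity of $T$ there is $\eps'>0$ such that $d(u,v)<\eps'$ implies $d(Tu,Tv)<\eps$. Since $Tx$ is pairwise syndetically equicontinuous, there is a neighborhood $V$ of $Tx$ such that for all $y,z\in V$ the set $\{k\colon d(T^ky,T^kz)<\eps'\}$ is syndetic. Using continuity again, pick a neighborhood $U$ of $x$ with $TU\subset V$. Then for $y,z\in U$ we have $Ty,Tz\in V$, so $S:=\{k\colon d(T^k(Ty),T^k(Tz))<\eps'\}=\{k\colon d(T^{k+1}y,T^{k+1}z)<\eps'\}$ is syndetic. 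By the choice of $\eps'$, for each $k\in S$ we get $d(T^{k+1}y,T^{k+1}z)<\eps'$ whence $d(T^{k+2}y,T^{k+2}z)<\eps$; equivalently the set $\{k\colon d(T^ky,T^kz)<\eps\}$ contains $(S+2)$, and a translate of a syndetic set is syndetic, so this set is syndetic. Thus $x$ is $\eps$-good for every $\eps$, i.e. $x\in\eq^{\syn}(X,T)$.

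I do not anticipate a genuine obstacle here: both parts are soft arguments using only the definition, the Ramsey-free fact that syndetic sets are translation invariant (noted in the subsection's opening remarks), and uniform continuity of $T$ on the compact space $X$. The only point requiring a little care is the bookkeeping on the index shift in the invariance argument—one must track the extra application of $T$ correctly so that the comparison set for $x$ is seen to contain a syndetic translate of the comparison set for $Tx$. This is exactly the place where the translation invariance of the syndetic collection is used, and it is the reason the lemma holds for syndetic (and dually thick) sensitivity by the standard technique, in contrast to the IP case which required the structure theorem.
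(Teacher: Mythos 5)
Your proof is correct and follows essentially the same route as the paper: both write $\eq^{\syn}(X,T)$ as the countable intersection of the (clearly open) sets of $(1/n)$-good points and derive the inclusion $T^{-1}(\eq^{\syn}(X,T))\subset \eq^{\syn}(X,T)$ from the translation invariance of syndetic sets together with continuity of $T$. The only cosmetic difference is that the paper obtains invariance level-by-level (keeping the same tolerance $1/m$ for $x$ and $Tx$ and shifting the syndetic set by one), whereas you pass through an auxiliary $\eps'$ and shift by two; both versions are fine.
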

\begin{proof}
For each $m\in\bbn$, denote by $\eq^{syn}_m(X,T)$ the collection of all points $x$ in $X$ with the property that there exists a $\delta>0$ such that for any $y,z\in B(x,\delta)$,
$\bigl \{k\in\bbn\colon  d(T^k y,T^k z)<\frac{1}{m}\bigr\}$ is a syndetic set.
Clearly, $\eq^{syn}_m(X,T)$  is an open subset of $X$.
As every syndetic system is translation invariant, $T^{-1}(\eq^{syn}_m(X,T))\subset \eq^{syn}_m(X,T))$.
Then $\eq^{syn}(X,T)$ is a   $G_\delta$ subset of $X$ and  $T^{-1}(\eq^{syn}(X,T))\subset \eq^{syn}(X,T)$,
because $\eq^{syn}(X,T)=\bigcap_{m=1}^{\infty} \eq^{syn}_m(X,T)$.
\end{proof}

We have the following dichotomy result for transitive systems.
\begin{thm}	\label{thm:trans-dichotomy-thick}
Let $(X,T)$ be a transitive system. Then $(X,T)$ is either pairwise thickly sensitive or almost pairwise syndetically equicontinuous.
\end{thm}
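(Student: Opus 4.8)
The plan is to prove the contrapositive direction, namely that if $(X,T)$ is \emph{not} pairwise thickly sensitive then it is almost pairwise syndetically equicontinuous; combined with the observation that the two alternatives are mutually exclusive (in a thickly sensitive system no transitive point can be pairwise syndetically equicontinuous, yet in a transitive system the transitive points form a residual set, so a residual $\eq^{\syn}(X,T)$ would have to meet them), this yields a genuine dichotomy. The feature that makes the classical Auslander--Yorke strategy available is precisely that the families of syndetic and thick subsets of $\bbn$ are translation invariant; this is what failed for IP-sets in Theorem~\ref{thm:minimal-dichotomy-IP} and forced a structure-theorem detour, whereas here Lemma~\ref{lem:syn-equi-points} already supplies that each $\eq^{\syn}_m(X,T)$ is open and backward invariant, which are exactly the hypotheses needed.

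First I would record the combinatorial duality used throughout: a subset $F\subset\bbn$ fails to be thick if and only if its complement $\bbn\setminus F$ is syndetic. Assuming $(X,T)$ is not pairwise thickly sensitive, fix $m\in\bbn$ and pick $0<\delta<\frac1m$. Negating the definition of pairwise thick sensitivity with this constant produces an opene set $U$ such that for all $x_1,x_2\in U$ the set $\{k\colon d(T^k x_1,T^k x_2)>\delta\}$ is not thick, so its complement $\{k\colon d(T^k x_1,T^k x_2)\le\delta\}$ is syndetic; since $\delta<\frac1m$ this complement is contained in $\{k\colon d(T^k x_1,T^k x_2)<\frac1m\}$, which is then syndetic as well. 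Hence $U\subset\eq^{\syn}_m(X,T)$, so every $\eq^{\syn}_m(X,T)$ is non-empty.

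The heart of the argument is an elementary fact about transitive systems: a non-empty open set $V$ with $T^{-1}(V)\subset V$ is dense. I would prove this by noting that $X\setminus V$ is closed and forward invariant, so if $V$ were not dense there would be an opene $W\subset X\setminus V$; for a transitive point $x_0$ (so $\omega(x_0,T)=X$) the orbit would enter $W$ at some time $k_0$ and, by forward invariance of $X\setminus V$, stay in $X\setminus V$ forever after, forcing $\omega(x_0,T)\subset X\setminus V$ and hence $X\subset X\setminus V$, a contradiction. Applying this to $V=\eq^{\syn}_m(X,T)$, which is open and backward invariant by Lemma~\ref{lem:syn-equi-points} and non-empty by the previous step, shows that every $\eq^{\syn}_m(X,T)$ is dense.

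Finally, since $\eq^{\syn}(X,T)=\bigcap_{m=1}^{\infty}\eq^{\syn}_m(X,T)$ is a countable intersection of dense open sets in the compact (hence Baire) metric space $X$, it is residual, so $(X,T)$ is almost pairwise syndetically equicontinuous. I expect the only substantive step to be the density lemma for backward-invariant open sets; the rest is bookkeeping with the thick/syndetic duality and Baire's theorem. The pleasant contrast with the IP case is that no structure theorem is required here — translation invariance alone drives the proof.
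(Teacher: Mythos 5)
Your proof is correct, but it is organized differently from the paper's. The paper splits into cases according to whether $\eq^{\syn}(X,T)$ is empty: if it is non-empty, then each $\eq^{\syn}_m(X,T)$ is opene, the orbit of any transitive point $x$ must enter it, and backward invariance ($T^{-1}(\eq^{\syn}_m)\subset \eq^{\syn}_m$) pulls membership back to $x$ itself, giving the stronger conclusion $\trans(X,T)\subset \eq^{\syn}(X,T)$; residuality then comes from residuality of $\trans(X,T)$ rather than from Baire's theorem applied to the sets $\eq^{\syn}_m$. If $\eq^{\syn}(X,T)$ is empty, the failure of syndetic equicontinuity at a transitive point is converted, via the same thick/syndetic duality you use, into the hypothesis of Lemma~\ref{lem:transitive-thick-sen}, which yields pairwise thick sensitivity. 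You instead prove the contrapositive: negating thick sensitivity scale by scale seeds each $\eq^{\syn}_m(X,T)$ with an opene set, your density lemma (a non-empty open backward-invariant set in a transitive system is dense, proved via forward invariance of the closed complement and $\omega$-limit sets) makes each $\eq^{\syn}_m$ dense, and Baire finishes. Your route is self-contained — it needs neither Lemma~\ref{lem:transitive-thick-sen} nor the residuality of $\trans(X,T)$, and it covers a priori the possibility that the $\eq^{\syn}_m$ are individually non-empty while their intersection might be empty. What the paper's route buys is precisely the inclusion $\trans(X,T)\subset \eq^{\syn}(X,T)$, which is what Corollary~\ref{cor:minimal-dichotomy-thick} invokes to upgrade ``almost pairwise syndetically equicontinuous'' to genuine pairwise syndetic equicontinuity when $(X,T)$ is minimal; from your argument alone that corollary would require the extra (easy) observation that transitive points lie in every non-empty $\eq^{\syn}_m$, which is exactly the paper's pull-back step. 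One cosmetic point: the openness and backward invariance of the sets $\eq^{\syn}_m$ appear in the \emph{proof} of Lemma~\ref{lem:syn-equi-points}, not in its statement, so you should cite that proof (or reprove the two facts) rather than the lemma itself.
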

\begin{proof}
	First assume that $\eq^{syn}(X,T)$ is not empty.
	For each $m\in\bbn$, let $\eq^{syn}_m(X,T)$ as in the proof of Lemma \ref{lem:syn-equi-points}.
	As $\eq^{syn}(X,T)$ is not empty, $\eq^{syn}_m(X,T)$ an opene subset of $X$.
	For any transitive point $x\in X$, there is $k\in\bbn$ such that $T^k x\in \eq^{syn}_m(X,T)$.
	Since $T^{-1}(\eq^{syn}_m(X,T))\subset \eq^{syn}_m(X,T)$,  $x\in \eq^{syn}_m(X,T)$. By the arbitrariness  of $m$, $x\in \eq^{syn}(X,T)$.
	This shows that $\trans(X,T)\subset \eq^{syn}(X,T)$.
	Therefore, $(X,T)$ is almost pairwise syndetically equicontinuous.
	
	Now assume that $\eq^{syn}(X,T)$ is empty.
	Fix a transitive point $x\in X$.
	As $x$ is not pairwise syndetic equicontinuous, there exists
	a $\delta>0$ with the property that for any neighborhood $U$ of $x$
	there exist $y,z\in U$ such that $\{k\in\bbn\colon d(T^k y, T^k z)>\delta\}$ is a thick set.
	Then by Lemma \ref{lem:transitive-thick-sen} $(X,T)$ is pairwise thickly sensitive.
\end{proof}

For a minimal system $(X,T)$, we have $\trans(X,T)=X$.
By the proof of Theorem~\ref{thm:trans-dichotomy-thick}, in this case if
$\eq^{syn}(X,T)$ is not empty then $\eq^{syn}(X,T)=X$.
So we have the following corollary.
\begin{cor}\label{cor:minimal-dichotomy-thick}
Let $(X,T)$ be a minimal system. Then $(X,T)$ is either pairwise thickly sensitive or pairwise syndetically equicontinuous.
\end{cor}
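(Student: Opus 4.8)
The plan is to deduce this corollary directly from Theorem~\ref{thm:trans-dichotomy-thick} together with the special structure of minimal systems, rather than to reprove the dichotomy from scratch. Since every minimal system is transitive, Theorem~\ref{thm:trans-dichotomy-thick} already tells us that a minimal system is either pairwise thickly sensitive or almost pairwise syndetically equicontinuous. Thus the only thing left to verify is that in the minimal case, ``almost pairwise syndetically equicontinuous'' automatically upgrades to ``pairwise syndetically equicontinuous'', i.e. that $\eq^{\syn}(X,T)$ being residual forces $\eq^{\syn}(X,T)=X$.

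First I would recall from Lemma~\ref{lem:syn-equi-points} that $\eq^{\syn}(X,T)$ is a $G_\delta$ subset satisfying the backward-invariance inclusion $T^{-1}(\eq^{\syn}(X,T))\subset \eq^{\syn}(X,T)$. The key point is the argument already carried out inside the proof of Theorem~\ref{thm:trans-dichotomy-thick}: if $\eq^{\syn}(X,T)$ is nonempty, then for each $m$ the open set $\eq^{\syn}_m(X,T)$ is a nonempty open set, and by transitivity plus backward invariance every transitive point lies in $\eq^{\syn}_m(X,T)$, hence in $\eq^{\syn}(X,T)$. Therefore $\trans(X,T)\subset \eq^{\syn}(X,T)$. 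For a minimal system we have $\trans(X,T)=X$, so this inclusion immediately gives $\eq^{\syn}(X,T)=X$, which is precisely the statement that $(X,T)$ is pairwise syndetically equicontinuous.

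Assembling these pieces, I would argue as follows. Apply Theorem~\ref{thm:trans-dichotomy-thick} to the minimal (hence transitive) system $(X,T)$. If $(X,T)$ is pairwise thickly sensitive, we are done. Otherwise, by that theorem $\eq^{\syn}(X,T)$ is residual, in particular nonempty; then the transitivity argument above shows $X=\trans(X,T)\subset\eq^{\syn}(X,T)$, so $(X,T)$ is pairwise syndetically equicontinuous. The two alternatives are mutually exclusive because a pairwise syndetically equicontinuous point cannot serve the witnessing role required by the thick-sensitivity constant.

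This corollary is essentially a packaging of an observation, so there is no real obstacle; the only subtlety is making sure the upgrade from ``residual'' to ``all of $X$'' is justified, and that is handled cleanly by the fact that in a minimal system every point is transitive together with the backward invariance $T^{-1}(\eq^{\syn}(X,T))\subset\eq^{\syn}(X,T)$ from Lemma~\ref{lem:syn-equi-points}. In short, the whole content is that minimality collapses ``almost'' into ``everywhere'' in the syndetic-equicontinuity alternative, exactly as noted in the remark preceding the corollary.
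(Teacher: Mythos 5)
Your proof is correct and follows essentially the same route as the paper: the authors likewise invoke Theorem~\ref{thm:trans-dichotomy-thick} and note that, by the argument in its proof, $\eq^{\syn}(X,T)\neq\emptyset$ forces $\trans(X,T)\subset\eq^{\syn}(X,T)$, so minimality ($\trans(X,T)=X$) upgrades ``almost pairwise syndetically equicontinuous'' to ``pairwise syndetically equicontinuous''. No gaps to report.
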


Combining \cite[Theorem~D]{YY18} and Corollary~\ref{cor:minimal-dichotomy-thick}, we have the following result.

\begin{cor}\label{cor:distal-syndetic-proximal}
Let $(X,T)$ be a minimal system and $\pi\colon(X,T)\to (X_d,T_d)$ be the factor map to its maximal distal factor.
Then the following statements are equivalent:
\begin{enumerate}
\item $(X,T)$ is pairwise syndetically equicontinuous;
\item $\pi$ is proximal;
\item $P(X,T)$ is closed.
\end{enumerate}
\end{cor}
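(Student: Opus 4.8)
The plan is to prove the two equivalences $(1)\Leftrightarrow(2)$ and $(2)\Leftrightarrow(3)$ separately, relying on the dichotomy of Corollary~\ref{cor:minimal-dichotomy-thick} for the first and on the structure of the maximal distal factor for the second.

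For $(1)\Leftrightarrow(2)$, the first thing I would observe is that pairwise syndetic equicontinuity and pairwise thick sensitivity are mutually exclusive. Indeed, if $(X,T)$ is pairwise syndetically equicontinuous, then for any $\delta>0$ and any point $x$ there is a neighborhood $U$ of $x$ such that for all $y,z\in U$ the set $\{k\in\bbn\colon d(T^ky,T^kz)<\delta\}$ is syndetic; its complement $\{k\in\bbn\colon d(T^ky,T^kz)\geq\delta\}$ then cannot be thick, since a thick set meets every syndetic set. Hence no such $\delta$ can serve as a pairwise thick sensitive constant. Consequently the dichotomy of Corollary~\ref{cor:minimal-dichotomy-thick} yields \emph{exactly} one of the two alternatives, so $(X,T)$ is pairwise syndetically equicontinuous if and only if it is not pairwise thickly sensitive. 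By \cite[Theorem~D]{YY18}, $(X,T)$ fails to be pairwise thickly sensitive precisely when $\pi$ is proximal, which establishes $(1)\Leftrightarrow(2)$.

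For $(2)\Leftrightarrow(3)$, I would use that the maximal distal factor $(X_d,T_d)$ corresponds to the smallest $T\times T$-invariant closed equivalence relation $R_\pi$ containing the proximal relation $P(X,T)$ (see \cite[Theorem 2]{EG60}). In particular $P(X,T)\subseteq R_\pi$ always holds, and $R_\pi$ is closed. If $\pi$ is proximal, that is $R_\pi\subseteq P(X,T)$, then $P(X,T)=R_\pi$ is closed. Conversely, if $P(X,T)$ is closed, then by Lemma~\ref{lem:proximal-closed} it is an equivalence relation; being also $T\times T$-invariant and closed, the minimality of $R_\pi$ forces $R_\pi\subseteq P(X,T)$, that is, $\pi$ is proximal. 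This gives $(2)\Leftrightarrow(3)$ and completes the proof.

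I expect the only genuinely substantive point, everything else being citation, to be the mutual exclusivity used to upgrade the dichotomy ``either $\dots$ or'' into an honest ``if and only if''; once that simple thick/syndetic observation is in place, the rest follows mechanically from \cite[Theorem~D]{YY18}, Lemma~\ref{lem:proximal-closed}, and the defining property of the maximal distal factor.
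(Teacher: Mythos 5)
Your proof is correct and follows essentially the same route as the paper, whose entire proof is the single line ``Combining \cite[Theorem~D]{YY18} and Corollary~\ref{cor:minimal-dichotomy-thick}, we have the following result.'' You simply make explicit two details the paper leaves implicit: the thick/syndetic mutual exclusivity needed to turn the dichotomy into an equivalence, and the derivation of $(2)\Leftrightarrow(3)$ from \cite[Theorem 2]{EG60} together with Lemma~\ref{lem:proximal-closed}.
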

	
Combining Lemma \ref{lem:semi-open-thick},  Corollaries \ref{cor:proximal-factor-thick-sen} and \ref{cor:minimal-dichotomy-thick}, we have the following result.
	
\begin{cor} \label{cor:factor-syn-eq}
Let $\pi\colon (X,T)\to (Y,S)$ be a factor map between minimal systems.
\begin{enumerate}
    \item If $(X,T)$ is pairwise syndetically equicontinuous then  so is $(Y,S)$.
    \item If $\pi$ is proximal, then $(X,T)$ is pairwise syndetically equicontinuous if and only if so is $(Y,S)$.
\end{enumerate}
\end{cor}

\subsection{Maximal almost pairwise syndetic-equicontinuous factor}
Similar to Theorem~\ref{thm:max-ip-*},
we will show that every minimal system $(X,T)$ admits a maximal pairwise syndetic equicontinuous factor.
\begin{thm}\label{thm:max-syndetic}
Every minimal system $(X,T)$ admits a
	maximal pairwise syndetically equicontinuous factor.
\end{thm}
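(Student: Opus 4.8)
The plan is to realize the maximal pairwise syndetically equicontinuous factor as the maximal proximal extension of the maximal distal factor, mirroring the proof of Theorem~\ref{thm:max-ip-*}. Let $\psi\colon (X,T)\to (X_d,T_d)$ be the factor map to the maximal distal factor. I would let $\mathcal{A}$ be the collection of all closed $T\times T$-invariant equivalence relations $R$ on $X$ with $R\subseteq R_\psi$ such that the induced factor map $(X/R,T_R)\to (X_d,T_d)$ is proximal. Since the identity on $X_d$ is proximal, $R_\psi\in\mathcal{A}$, so $\mathcal{A}\neq\emptyset$. Set $R^*=\bigcap_{R\in\mathcal{A}} R$; this is again a closed $T\times T$-invariant equivalence relation contained in $R_\psi$. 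As $X$ is compact metric, I would choose a countable family $R_i\in\mathcal{A}$ with $R^*=\bigcap_{i=1}^\infty R_i$, so that $(X/R^*,T_{R^*})$ is the inverse limit of the systems $(X/R_i,T_{R_i})$.

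The key step is to show $R^*\in\mathcal{A}$, i.e.\ that the inverse limit of proximal extensions of $(X_d,T_d)$ is again a proximal extension of $(X_d,T_d)$. This is the analogue of the fact (used in Theorem~\ref{thm:max-ip-*} via \cite[Page 592]{Vries1993}) that an inverse limit of almost one-to-one extensions is almost one-to-one; the corresponding statement for proximal extensions is standard, and I expect this inverse-limit preservation to be the main technical obstacle. One remark that may ease the argument: over the distal base $X_d$ a proximal extension $\pi$ automatically satisfies $R_\pi=P(\cdot)$, since every proximal pair maps to a proximal pair and the only such pairs in the distal $X_d$ are diagonal; thus one could alternatively control the inverse limit through the closedness of the proximal relation together with Lemma~\ref{lem:proximal-closed}. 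Granting $R^*\in\mathcal{A}$, the map $(X/R^*,T_{R^*})\to(X_d,T_d)$ is proximal; since $(X_d,T_d)$ is distal and hence pairwise syndetically equicontinuous, Corollary~\ref{cor:factor-syn-eq}(2) gives that $(X/R^*,T_{R^*})$ is pairwise syndetically equicontinuous.

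It remains to prove maximality. Let $(Z,W)$ be any pairwise syndetically equicontinuous factor of $(X,T)$ with factor map $\pi_1\colon (X,T)\to (Z,W)$, and let $\theta_1\colon (Z,W)\to (Z_d,W_d)$ be the factor map to its maximal distal factor; by Corollary~\ref{cor:distal-syndetic-proximal}, $\theta_1$ is proximal. Since $(Z_d,W_d)$ is a distal factor of $(X,T)$, it factors through $(X_d,T_d)$ via some $\phi$ with $\phi\circ\psi=\theta_1\circ\pi_1$. I would then set $R'=R_{\pi_1}\cap R_\psi$, a closed $T\times T$-invariant equivalence relation contained in $R_\psi$, and verify $R'\in\mathcal{A}$. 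Identifying $X/R'$ with the image of $X$ in $Z\times X_d$ under $x\mapsto(\pi_1(x),\psi(x))$, take two of its points coming from $x,y$ with $\psi(x)=\psi(y)$; then $\theta_1(\pi_1(x))=\phi(\psi(x))=\phi(\psi(y))=\theta_1(\pi_1(y))$, so $(\pi_1(x),\pi_1(y))\in R_{\theta_1}\subseteq P(Z,W)$ is proximal, while the $X_d$-coordinates coincide, whence the two points form a proximal pair in $X/R'$. Thus $(X/R',T_{R'})\to(X_d,T_d)$ is proximal and $R'\in\mathcal{A}$. Consequently $R^*\subseteq R'\subseteq R_{\pi_1}$, so $(Z,W)=(X/R_{\pi_1},T_{R_{\pi_1}})$ is a factor of $(X/R^*,T_{R^*})$. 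This exhibits $(X/R^*,T_{R^*})$ as the maximal pairwise syndetically equicontinuous factor of $(X,T)$, completing the proof modulo the inverse-limit step flagged above.
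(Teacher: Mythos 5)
Your proposal is correct and takes essentially the same route as the paper: the same class $\mathcal{A}$ of closed invariant equivalence relations whose quotients are proximal extensions of $(X_d,T_d)$, the same countable-intersection/inverse-limit construction of $R^*$ (the inverse-limit preservation step you flag is exactly what the paper disposes of by citing de Vries, p.~416), and the same maximality argument via $R'=R_{\pi_1}\cap R_\psi$ together with the map $\phi$ from the maximal distal factor. The only local difference is that you verify proximality of $X/R'\to X_d$ directly (proximal first coordinates, equal second coordinates), whereas the paper argues by contradiction with a non-diagonal minimal pair in $R_{\gamma_2}$; your direct version is, if anything, slightly cleaner.
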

\begin{proof} Let $\psi: (X,T)\rightarrow (X_d,T_d)$ be a factor map to its maximal distal factor.
	Let $\mathcal{A}$ consist all the  closed $T\times T$  invariant equivalence relation $R$ on  $X$ such that $(X/R, T_R)$ is a proximal extension of $(X_d,T_d)$. Since $R_\psi \in \mathcal{A}$,  $\mathcal{A}$ is not empty.
	Let $R^*=\cap\{R:R\in \mathcal{A}\}$, then $R^*$ is closed $T\times T$  invariant equivalence relation. Since $X$ is a compact metric space,
	then there exists a countable $R_i\in \mathcal{A}$ such that $R^*=\cap_{i=1}^\infty R_i$.
	Since $(X/R^*,T_{R^*})$ is an inverse limit of $(X/R_i,T_{R_i})$ with $(X/R_i,T_{R_i})$  a proximal extension of $(X_d,T_d)$, then
	$(X/R^*,T_{R^*})$ is a proximal extension of $(X_d,T_d)$,
	$R^*\in \mathcal{A}$ (see\cite[Page 416]{Vries1993}). Thus $(X/R^*,T_{R^*})$ is the maximal proximal extension of $(X_d,T_d)$.
By Corollary
	~\ref{cor:distal-syndetic-proximal},
	$(X/R^*,T_{R^*})$ is pairwise syndetically equicontinuous.
	\[\xymatrix{
	(X,T) \ar[drrrrrr]^{\pi_2} \ar[dr]^{\pi_3} \ar[d]^{\pi_1}  \ar[drrdrrrr]^{\psi}         \\
	(Z,W) \ar[d]^{\theta_1}
	&(Y,S)\ar[l]^{\gamma_1} \ar[drrrrr]^{\gamma_2}
	&&&&& (X/R^*,T_{R^*})\ar[d]^{\theta_2} \\
	(Z_d,W_d)
	&&&&&&(X_d,T_d)  \ar[llllll]^{\phi}
}\]
	We next show that $(X/R^*,T_{R^*})$ is the maximal pairwise syndetically equicontinuous factor of $(X,T)$.
	
	Let $(Z,W)$ be a pairwise syndetically equicontinuous factor of $(X,T)$. We are sufficient to show that $(Z,W)$ is a factor
	of $(X/R^*,T_{R^*})$.
	Let $\pi_1:(X,T)\rightarrow (Z,W)$
	and $\theta_1:(Z,W)\rightarrow (Z_d,W_d)$
	be factor maps.
	By  Corollary
	~\ref{cor:distal-syndetic-proximal}, $\theta_1$ is proximal.
	Denote $R_{\pi_3}=R_{\pi_1}\cap R_\psi$ and let $(Y,S)=(X/R_{\pi_3},T_{R_{\pi_3}})$.
	It is clear that we have the following factor maps:
	$\gamma_1:(Y,S)\rightarrow (Z,W)$
	and
	$\gamma_2:(Y,S)\rightarrow (X_d,T_d)$. If $\gamma_2$ is proximal, then $(Y,S)$ is a factor of $(X/R^*,T_{R^*})$ and $(Z,W)$ is a factor
	of $(X/R^*,T_{R^*})$. 
	
	Now we show that $\gamma_2$ is proximal. Suppose not, there exists $(y_1,y_2)\in R_{\gamma_2}\setminus \Delta(Y)$ such that 
   $(y_1,y_2)$ is a distal pair. Without loss of generality, we can assume that $(y_1,y_2)$ is a minimal point.
   Since $R_{\pi_3}=R_{\pi_1}\cap R_\psi$ and $(Y,S)=(X/R_{\pi_3},T_{R_{\pi_3}})$, $\gamma_1(y_1)\not= \gamma_1(y_2)$.
   It is clear that $(\theta_1\gamma_1(y_1), \theta_1\gamma_1(y_2))=(\phi\gamma_2(y_1), \phi\gamma_2(y_2))$ and thus $\theta_1\gamma_1(y_1)= \theta_1\gamma_1(y_2)$.
   However  $(\gamma_1(y_1), \gamma_1(y_2))$ is a minimal point which contradict to 
   $\theta_1$ a proximal extension.
   Therefore $\gamma_2$ is proximal.
\end{proof}

We end this section by having the following question:

\begin{ques}
For a minimal system $(X,T)$, how to describe the  $T\times T$-invariant closed equivalence relation on $X$ determined by the maximal pairwise syndetically equicontinuous factor of $(X,T)$?
\end{ques}

\medskip
\noindent\textbf{Acknowledgments.} The authors were supported by NNSF of China (11771264, 11871188) and NSF of Guangdong Province (2018B030306024).
The authors would like to thank the referees for the careful reading and helpful suggestions.

\end{document}